\theoremstyle{plain}
\newtheorem{theorem}{Theorem}[section]
\newtheorem*{theorem*}{Theorem}
\newtheorem{proposition}[theorem]{Proposition}
\newtheorem{lemma}[theorem]{Lemma}
\theoremstyle{definition}
\newtheorem{definition}[theorem]{Definition}
\newtheorem{assumption}[theorem]{Assumption}
\theoremstyle{remark}
\newtheorem{remark}[theorem]{Remark}
\numberwithin{equation}{section}
\DeclareMathOperator{\id}{id}
\DeclareMathOperator{\sgn}{sgn}
\newcommand{\ev}{\mathbb{E}}
\newcommand{\pr}{\mathbb{P}}
\newcommand{\R}{\mathbb{R}}
\renewcommand{\P}{\mathcal{P}}
\newcommand{\F}{\mathcal{F}}
\renewcommand{\L}{\mathcal{L}}
\renewcommand{\d}{\mathrm{d}}
\newcommand{\define}{\mathpunct{:}}
\newcommand{\bb}[1]{\mathbb{#1}}
\renewcommand{\bf}[1]{\mathbf{#1}}
\renewcommand{\cal}[1]{\mathcal{#1}}
\begin{document}

\noindent
\begin{center}
    \Large
    \textbf{The Atlas Model and SDEs with Boundary Interaction}

    \vspace{1em}

    \normalsize
    Philipp Jettkant\footnote[1]{Department of Mathematics, Imperial College London, UK, \href{mailto:p.jettkant@imperial.ac.uk}{p.jettkant@imperial.ac.uk}.} 

\end{center}

\vspace{1em}

\begin{abstract}
We study the mean-field limit of the Atlas model and its connection to SDEs with dependence on the distribution of hitting and local times. The Atlas model describes a system of Brownian particles on the real line, where only the lowest ranked particle receives a positive drift, proportional to the number of particles. We show that in the mean-field limit the particle system converges to a novel SDE with reflection at a moving boundary, whose motion is such that the average local time spent at the boundary grows at a constant rate. In general, the boundary is represented by a measure, so the reflection must be interpreted in a relaxed sense. However, for sufficiently regular initial particle profiles, we prove that the boundary is a continuous function. Our analysis relies on a reformulation of the problem via McKean--Vlasov SDEs with interaction through hitting and local times.
\end{abstract}

% \section*{Open Questions}

% Here are some open questions.

% \begin{itemize}
%     \item Can the link between the reflected SDE \eqref{eq:mfl} with growth constraint on the expected local time and McKean--Vlasov SDEs \eqref{eq:probab_repr_hitting} and \eqref{eq:probab_repr_reflected} be made directly, without the need to appeal to the Fokker--Planck equation \eqref{eq:fpe}?
%     \item In the case that the Atlas model \eqref{eq:ps} converges to a strong solution $b$ of \eqref{eq:mfl}, does the Atlas stock $B^N$ tend to $b$ uniformly on compacts in probability?
%     \item What can be said regarding McKean--Vlasov SDEs \eqref{eq:probab_repr_hitting} and \eqref{eq:probab_repr_reflected} in the supercritical case?
%     \item Can results be extended to generalised Atlas models from \cite{banner_atlas_2005}?
% \end{itemize}

\section{Introduction}

In this article, we study the mean-field limit of a system of Brownian particles interacting through rank-dependent drifts. We consider the Atlas model, in which $N$ particles diffuse on the real line according to independent Brownian motions, with the lowest ranked particle, called the Atlas particle, receiving a drift with intensity $\gamma N$ for $\gamma > 0$. Formally, the state $X^i = (X^i_t)_{t \geq 0}$ of particle $i \in \{1, \dots, N\}$ evolves according to the dynamics
\begin{equation} \label{eq:ps}
    \d X^i_t = \gamma N \bf{1}_{\{X^i_t = X^{(1)}_t\}} \, \d t + \d W^i_t,
\end{equation}
starting from the initial condition $X^i_0 = \xi_i$. The processes $W^i = (W^i_t)_{t \geq 0}$, $i \in \{1, \dots, N\}$, are independent Brownian motions and $X^{(1)}_t$,~\ldots, $X^{(N)}_t$ denote the order statistics or ranks of the states $X^1_t$,~\ldots, $X^N_t$. That is,
\begin{equation*}
    X^{(1)}_t \leq X^{(2)}_t \leq \dots \leq X^{(N)}_t.
\end{equation*}
Throughout the article, we assume that the support of the common law of all the initial conditions $\xi_1$,~\ldots, $\xi_N$ is a subset of $[0, \infty)$ that includes the origin. 

The Atlas model \eqref{eq:ps} was introduced by Fernholz in his seminal monograph \cite{fernholz_spt_2002} on Stochastic Portfolio Theory (see Example 5.3.3). In this context, the particle states represent the logarithmic market capitalisations of constituents of a large stock market. Since then, there has been a steady interest in finite and infinite versions of the Atlas model \cite{banner_atlas_2005, pal_ranked_bm_2008, chatterjee_rank_2010, pal_ranked_concentration_2014,  dembo_atlas_2017, cabezas_rank_2019, atar_atlas_2025} and more general systems with coefficients depending on particle ranks. Our goal is to understand the behaviour of the system \eqref{eq:ps} as the number $N$ of particles is taken to infinity. In particular, we wish to derive an SDE that describes the dynamics of a representative particle in the limit $N \to \infty$. Due to the interactive nature of the system, we should expect this SDE to be of McKean--Vlasov type.

Unlike for rank-based particle systems whose coefficients do not scale with the size $N$ of the system, the structure of the Atlas model \eqref{eq:ps} does not immediately suggest an obvious candidate for the mean-field limit. The main impediment to deriving such a limit is the singular nature of the drift provided to the Atlas particle. However, the matter becomes clearer upon reformulation of \eqref{eq:ps} as a system of reflected diffusions. Let us denote the state $X^{(1)}$ of the Atlas particle by $B^N = (B^N_t)_{t \geq 0}$ and define the nondecreasing processes $L^i = (L^i_t)_{t \geq 0}$, $i = 1$,~\ldots, $N$, by
\begin{equation*}
    L^i_t = \int_0^t \gamma N \bf{1}_{\{X^i_s = B^N_s\}} \, \d s
\end{equation*}
for $t \geq 0$. Note that $\frac{1}{N} \sum_{j = 1}^N L^j_t = \gamma t$. Now, we may view the Atlas model \eqref{eq:ps} as a system of SDEs
\begin{equation} \label{eq:ps_reflected}
    \d X^i_t = \d W^i_t + \d L^i_t
\end{equation}
with reflection at the barrier $B^N$, determined by the constraint $\frac{1}{N} \sum_{j = 1}^N L^j_t = \gamma t$. Indeed, it is easy to see that $(X^i - B^N, L^i)$ solves the Skorokhod problem for the process $\xi_i + W^i - B^N$ for $i = 1$,~\ldots, $N$, since $X^i_t - B^N_t = \xi_i + W^i_t - B^N_t + L^i_t \geq 0$, $L^i$ is continuous, nondecreasing, and started from zero, and
\begin{equation*}
    \int_0^t (X^i_s - B^N_s) \, \d L^i_t = \gamma N \int_0^t (X^i_s - B^N_s) \bf{1}_{\{X^i_s = B^N_s\}} \, \d s = 0
\end{equation*}
for $t \geq 0$. The defining features of the Skorokhod problem in one dimension are recalled in Appendix \ref{sec:skorokhod_problem}, Definition \ref{def:skorokhod_problem}. 

\subsection{The Mean-Field Limit}

An obvious candidate now presents itself as the limit for the reflected SDE \eqref{eq:ps_reflected}. Indeed, naively taking the limit as $N \to \infty$ in the constraint for the local times $L^1$,~\ldots, $L^N$, suggests that the state $X = (X_t)_{t \geq 0}$ of the representative particle in the mean-field limit should satisfy the SDE
\begin{equation} \label{eq:mfl}
    \d X_t =  \d W_t + \d L_t
\end{equation}
with initial condition $X_0 = \xi \sim \xi_1$, reflected at a deterministic barrier $b = (b_t)_{t \geq 0}$ such that $\ev[L_t] = \gamma t$. Since the constraint is on the law of the local time of the representative particle, we will refer to \eqref{eq:mfl} as a reflected McKean--Vlasov SDE. We understand this equation in the following way.

\begin{definition} \label{def:solution_strong}
We say that a continuous function $b \define [0, \infty) \to \R$ is a solution of McKean--Vlasov SDE \eqref{eq:mfl} if the solution $(\tilde{X}, L)$ of the Skorokhod problem for $\xi + W - b$ satisfies $\ev[L_t] = \gamma t$ for $t \geq 0$.
\end{definition}

Surprisingly, it is straightforward to show that if it exists, the solution to McKean--Vlasov SDE \eqref{eq:mfl} is unique.

\begin{proposition} \label{prop:mfl_strong_uniqueness}
McKean--Vlasov SDE \eqref{eq:mfl} has at most one solution.
\end{proposition}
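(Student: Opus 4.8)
The plan is to exploit the explicit one‑dimensional Skorokhod map. Recall from Appendix~\ref{sec:skorokhod_problem} that the Skorokhod problem for a continuous path $y$ is solved by $L_t = \sup_{0 \le s \le t}(-y_s)^+$ and $\tilde X_t = y_t + L_t$. Applying this with $y = \xi + W - b$, a continuous $b$ is a solution in the sense of Definition~\ref{def:solution_strong} precisely when the associated local time $L^b_t = \sup_{0 \le s \le t}(b_s - \xi - W_s)^+$ satisfies $\ev[L^b_t] = \gamma t$ for all $t \ge 0$. Two elementary features of this formula drive the proof: monotonicity, $b^1 \le b^2$ pointwise $\Rightarrow L^{b^1}_t \le L^{b^2}_t$ pathwise $\Rightarrow \ev[L^{b^1}_t] \le \ev[L^{b^2}_t]$; and a quantitative strict version of the same statement, established below.

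Suppose $b^1, b^2$ are solutions and $b^1 \neq b^2$. Let $\tau := \inf\{t \ge 0 : b^1_t \neq b^2_t\}$; since the Skorokhod map is causal, $b^1, b^2$ and their full reflected data coincide on $[0,\tau]$, and $b^1_\tau = b^2_\tau$. Just to the right of $\tau$, after relabelling if needed, there is an interval $(\tau, \tau_1)$ on which $b^2 > b^1$ (this is the situation when $b^1 - b^2$ has a definite sign on a right‑neighbourhood of $\tau$; the case where $b^1 - b^2$ changes sign arbitrarily close to $\tau$ is discussed below). Fix $t_0 \in (\tau,\tau_1)$ and, by continuity, choose $\eta>0$ and $\delta>0$ with $b^2_s \ge b^1_s + \delta$ on $J := [t_0-\eta, t_0+\eta] \subseteq (\tau,\tau_1)$, and set $T := t_0 + \eta$. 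Crucially, $b^1 \le b^2$ on all of $[0,T]$, so monotonicity gives $L^{b^1}_T \le L^{b^2}_T$ pathwise.

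The quantitative step is that this inequality is strict on a set of positive probability. Let $A$ be the event that the running maximum $\sup_{0 \le s \le T}(b^1_s - \xi - W_s)$ is attained at some point of $J$ and is nonnegative. On $A$ the Skorokhod formula gives $L^{b^1}_T = \sup_{s\in J}(b^1_s - \xi - W_s) \ge 0$, while $L^{b^2}_T \ge \sup_{s\in J}(b^2_s - \xi - W_s)^+ \ge \sup_{s\in J}(b^1_s - \xi - W_s) + \delta = L^{b^1}_T + \delta$. Combining with the pathwise bound $L^{b^1}_T \le L^{b^2}_T$ on $A^c$, we get $\gamma T = \ev[L^{b^2}_T] \ge \ev[L^{b^1}_T] + \delta\,\pr(A) = \gamma T + \delta\,\pr(A)$, which is absurd once $\pr(A) > 0$; hence $b^1 = b^2$.

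Thus the proof reduces to two points. The first, and the genuinely technical one, is $\pr(A) > 0$: this follows from standard properties of Brownian motion — its running maximum over $[0,T]$ is a.s.\ finite, whereas $W$ can be made arbitrarily negative on the interior of $J$ — together with the independence of $\xi$ and $W$, which together force the running maximum of $s \mapsto b^1_s - \xi - W_s$ to be realised inside $J$ and to be large, in particular nonnegative. The second is the reduction to the comparable situation used above: it is automatic once $b^1 - b^2$ has a definite sign on some right‑neighbourhood of its first zero $\tau$ (take the first excursion of $b^1 - b^2$ away from $0$), and the remaining oscillatory scenario — where $b^1 - b^2$ changes sign in every right‑neighbourhood of $\tau$ — is the main obstacle; I would handle it by localising the argument above to a single excursion, carefully accounting for the fact that the barriers but not necessarily the local times $L^{b^1}, L^{b^2}$ agree at the excursion's left endpoint, or alternatively by showing that such oscillation of two solutions is incompatible with the constraint $\ev[L^b_t] = \gamma t$.
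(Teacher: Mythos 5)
Your approach is genuinely different from the paper's and, as written, it has an unresolved gap that you yourself flag. The paper's proof avoids any case analysis on the sign of $b^1 - b^2$: writing $X^i_t = \xi + W_t + L^i_t$, so that $X^1 - X^2 = L^1 - L^2$, one computes
\[
\lvert X^1_t - X^2_t\rvert^2 \;\propto\; \int_0^t (X^1_s - X^2_s)\,\d(L^1_s - L^2_s)
\;\leq\; \int_0^t (b^1_s - b^2_s)\,\d(L^1_s - L^2_s),
\]
using the Skorokhod complementarity $\int_0^t (X^i_s - b^i_s)\,\d L^i_s = 0$ and the positivity $X^i - b^j \geq 0$. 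Since $b^1 - b^2$ is deterministic and $\ev[L^1_t] = \ev[L^2_t] = \gamma t$, the expectation of the right-hand side is zero regardless of the sign of $b^1 - b^2$. This is the key feature your argument is missing: the paper's estimate is already symmetric in $(b^1, b^2)$ and does not need a region where one barrier dominates the other.

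Your proof, in contrast, relies on the explicit formula $L^b_t = \sup_{0\le s\le t}(b_s - \xi - W_s)^+$ and the pathwise monotonicity $b^1 \le b^2$ on $[0,T]\Rightarrow L^{b^1}_T \le L^{b^2}_T$. This monotonicity only holds when one barrier dominates the other on the \emph{entire} interval $[0,T]$, so you must find a right-neighbourhood of $\tau = \inf\{t: b^1_t \neq b^2_t\}$ on which $b^2 - b^1$ has a fixed sign. You correctly observe that this can fail if $b^1 - b^2$ changes sign in every right-neighbourhood of $\tau$, and you do not resolve that case. The proposed fix of localising to a single excursion of $b^1 - b^2$ does not obviously work: on an excursion interval $(a,c)$ with $a > \tau$, the local times $L^{b^1}_a$ and $L^{b^2}_a$ need not coincide pathwise (earlier excursions of opposite sign will have driven them apart on some paths), so the pathwise inequality $L^{b^1}_c - L^{b^1}_a \leq L^{b^2}_c - L^{b^2}_a$ is not available; and the constraint $\ev[L^{b^i}_t] = \gamma t$ is global in $t$, so there is no contradiction to extract from a single excursion without controlling what happened before. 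This is a genuine gap, not a routine detail, and closing it seems to require an idea of a rather different flavour — indeed essentially the paper's idea, which sidesteps the sign issue altogether.

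Two smaller remarks. First, the positivity $\pr(A) > 0$ is plausible by a support-type argument for Brownian motion together with the independence of $\xi$ and $W$, but you should actually prove it rather than appeal to ``standard properties''; in particular you must take care that the running supremum over $[0, t_0 - \eta]$ and over $J$ can be compared with the required inequality with positive probability, uniformly over the random level $\xi$. Second, you have the Skorokhod formula right, but note that Definition~\ref{def:skorokhod_problem} writes the regulator as $z_t = \sup_{0\le s\le t}(f_s)_-$ with $f = \xi + W - b$; be careful that it is the running supremum of the pointwise negative part, not the negative part of the running infimum (they coincide here since $f_0 \ge 0$, but it is worth being explicit).
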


\begin{proof}
Let $b^1$ and $b^2$ be solutions to McKean--Vlasov SDE \eqref{eq:mfl}. Denote the solutions to the Skorokhod problem for $\xi + W - b^i$ by $(X^i - b^i, L^i)$, $i = 1$, $2$. Then, we have by the fundamental theorem of calculus that
\begin{align*}
    \lvert X^1_t - X^2_t\rvert^2 &= \int_0^t (X^1_s  - X^2_s) \, \d L^1_s - \int_0^t (X^1_s  - X^2_s) \, \d L^2_s \\
    &\leq \int_0^t (b^1_s  - b^2_s) \, \d L^1_s - \int_0^t (b^1_s  - b^2_s) \, \d L^2_s \\
    &= \int_0^t (b^1_s  - b^2_s) \, \d (L^1_s - L^2_s).
\end{align*}
Here we used in the second line the properties of the Skorokhod problem, namely that $\int_0^t X^i_s \, \d L^i_s = \int_0^t b^i_s \, \d L^i_s$ for $i \in \{1, 2\}$ and $\int_0^t X^i_s \, \d L^j_s \geq \int_0^t b^i_s \, \d L^j_s$ for $i$, $j \in \{1, 2\}$. Taking expectation on both sides of the above inequality implies that
\begin{equation*}
    \ev\bigl[\lvert X^1_t - X^2_t\rvert^2\bigr] \leq \ev\biggl[\int_0^t (b^1_s  - b^2_s) \, \d (L^1_s - L^2_s)\biggr] = 0,
\end{equation*}
since $\ev[L^1_t] = \ev[L^2_t] = \gamma t$. Due to the continuity of $X^1$ and $X^2$, it follows that a.s.\@ $X^1_t = X^2_t$ for all $t \geq 0$. This implies that a.s.\@ $L^1_t = L^2_t$ for $t \geq 0$, from which we deduce that
\begin{equation*}
    \int_0^t b^1_s \, \d L^1_s = \int_0^t X^1_s \, \d L^1_s = \int_0^t X^2_s \, \d L^2_s = \int_0^t b^2_s \, \d L^2_s.
\end{equation*}
We take expectations on both sides of this equality and divide by $\gamma$ to find $\int_0^t b^1_s \, \d s = \int_0^t b^2_s \, \d s$ for $t \geq 0$. Since $b^1$ and $b^2$ are continuous, it follows that they coincide. This concludes the proof.
\end{proof}

% as saying that $(X - b, L)$ is a solution to the Skorokhod problem for $\xi + W - b$ and $\ev[L_t] = \gamma t$. For the former to be well-posed, we must require that the path $b$ is c\`adl\`ag. Then the solution property of the Skorokhod problem asks that $X_t - b_t = \xi_t + W_t - b_t + L_t \geq 0$, that $L$ is c\`adl\`ag, nondecreasing, and started from zero, and that $\int_0^t (X_s - b_s) \, \d L_s = 0$ for $t \geq 0$. 
Is it reasonable, however, to suppose that the limit of the barrier $B^N$ in the finite system satisfies McKean--Vlasov \eqref{eq:mfl} in the sense of Definition \ref{def:solution_strong}? Note that the barrier $B^N$ is the Atlas particle whose dynamics are given by
\begin{equation} \label{eq:atlas_particle}
    \d B^N_t = \gamma N \, \d t + \d \tilde{W}_t - \frac{1}{2} \, \d L^{X^{(2)} - X^{(1)}}_t,
\end{equation}
where $\tilde{W} = (\tilde{W}_t)_{t \geq 0}$ is a Brownian motion and $L^{X^{(2)} - X^{(1)}} = (L^{X^{(2)} - X^{(1)}}_t)_{t \geq 0}$ denotes the local time at zero of the first gap $X^{(2)} - X^{(1)}$. This formula follows e.g.\@ from \cite[Proposition 4.1.11]{fernholz_spt_2002}, which derives the dynamics of ranked processes. A priori, it is not at all clear that the exploding drift $\gamma N t$ on the right-hand side of \eqref{eq:atlas_particle} should be offset by the local time at zero of the gap $X^{(2)} - X^{(1)}$ in precisely such a way that we wind up with a continuous and deterministic barrier $b$. This motivates us to search for a relaxation of the reflected McKean--Vlasov SDE \eqref{eq:mfl}, which make sense even when the limit of $(B^N)_{N \geq 1}$ could in principle be a much rougher object.

To that end, let us introduce the random measure $\beta^N$ on $[0, \infty) \times \R$ given by
\begin{equation*}
    \d \beta^N(t, x) = \d \delta_{B^N_t}(x) \d t.
\end{equation*}
Then, as soon as we can get sufficient control on the magnitude of $B^N$ as $N \to \infty$, it follows that $(\beta^N)_{N \geq 1}$ is tight on $\cal{M}_1([0, \infty) \times \R)$. Here $\cal{M}_1([0, \infty) \times \R)$ is the set of measures $m$ on $[0, \infty) \times \R$ such that $m([0, t] \times \R) = t$ for $t \geq 0$, topologised with the coarsest topology that makes the maps $\cal{M}_1([0, \infty) \times \R) \to \P([0, n] \times \R)$, $m \mapsto \frac{1}{n} m \vert_{[0, n] \times \R}$ for $n \in \bb{N}$, continuous. If the sequence $(\beta^N)_{N \geq 1}$ is tight on $\cal{M}_1([0, \infty) \times \R)$, it admits a subsequential weak limit $\beta$ with values in $\cal{M}_1([0, \infty) \times \R)$. We wish to phrase our generalisation of McKean--Vlasov SDE \eqref{eq:mfl} in terms of such $\beta$. As before, the state $(X_t)_{t \geq 0}$ of the representative particle takes the form $X_t = \xi + W_t + L_t$ for a nondecreasing continuous process $L = (L_t)_{t \geq 0}$ started from zero. However, we can no longer conceive of $L$ as a regulator process arising from reflection at a barrier. In particular, we have to weaken the conditions $X_t \geq b_t$ and $\int_0^t (X_s - b_s) \, \d L_s = 0$, characterising the Skorokhod problem underlying the strong formulation of McKean--Vlasov SDE \eqref{eq:mfl}. 

This is addressed by the following definition, for which we fix a filtered probability space $(\Omega, \F, \bb{F}, \pr)$ carrying an $\F_0$-measurable random variable $\xi \sim \xi_1$, an $\bb{F}$-Brownian motion $W$, as well as a subfiltration $\bb{G}$ of $\bb{F}$ independent of $(\xi, W)$. We also denote by $\bb{F}^{\xi, W} = (\F^{\xi, W}_t)_{t \geq 0}$ the filtration given by $\F^{\xi, W}_t = \sigma(\xi, W_s \define s \in [0, t])$ for $t \geq 0$ and say that an $\cal{M}_1([0, \infty) \times \R)$-valued random variable is $\bb{G}$-adapted if for all $t \geq 0$, the random variable $\beta([0, s] \times A)$ is $\cal{G}_t$-measurable for $s \in [0, t]$ and $A \in \cal{B}(\R)$.

\begin{definition} \label{def:generalised_solution}
We say that a tuple $(L, \beta)$ consisting of an integrable nondecreasing continuous $\bb{F}$-adapted stochastic process $L$ and an $\cal{M}_1([0, \infty) \times \R)$-valued $\bb{G}$-adapted random measure $\beta$ is a generalised solution of McKean--Vlasov SDE \eqref{eq:mfl} if 
\begin{enumerate}[noitemsep, label = (\roman*)]
    \item \label{it:reflection} for all $\varphi \in C_b([0, \infty) \times \R)$ and all $T > 0$, we have a.s.\@
    \begin{equation}
        \ev\biggl[\int_0^T \varphi(t, X_t) \, \d L_t \bigg\vert \cal{G}_T \biggr] = \gamma \int_{[0, T] \times \R} \varphi(t, x) \, \d \beta(t, x);
    \end{equation}
    \item \label{it:minimality} for all $\varphi \in C_b([0, \infty) \times \R)$ such that $x \mapsto \varphi(t, x)$ is nondecreasing for $t \geq 0$, all integrable nondecreasing continuous $\bb{F}^{\xi, W}$-adapted stochastic processes $\tilde{L}$ with $\ev[\tilde{L}_t] = \gamma t$ for $t \geq 0$, and all $T > 0$, we have a.s.\@
    \begin{equation}
        \ev\biggl[\int_0^T \varphi(t, X_t) \, \d \tilde{L}_t \bigg\vert \cal{G}_T \biggr] \geq \gamma\int_{[0, T] \times \R} \varphi(t, x) \, \d \beta(t, x).
    \end{equation}
\end{enumerate}
Here $X = \xi + W + L$.
\end{definition}

Condition \ref{it:reflection} encodes both the fact that $\ev[L_t] = \gamma t$ and that when the state is located at the boundary, its law is given by $\beta$. Condition \ref{it:minimality} captures that $X_t$ should stay above the boundary. Note that if $b$ is a solution to McKean--Vlasov SDE \eqref{eq:mfl} in the strong sense, then if $(X - b, L)$ is the solution to the Skorokhod problem for $\xi + W - b$, the couple $(L, \beta)$, where $\d \beta(t, x) = \d \delta_{b_t}(x) \, \d t$, solves McKean--Vlasov SDE \eqref{eq:mfl} in the sense of Definition \ref{def:generalised_solution}. Hence, Definition \ref{def:generalised_solution} indeed generalises Definition \ref{def:solution_strong}.

As hinted at earlier, it can be shown that $(\beta^N)_{N \geq 1}$ is tight on $\cal{M}_1([0, \infty) \times \R)$. Moreover, we will prove that for any $i \in \{1, \dots, N\}$, the sequence $(L^{N, i})_{N \geq i}$ is tight on $C([0, \infty))$. Note that we add the subscript $N$ to the process $L^{N, i}$ for emphasis. Any limit point of the two sequences is a solution to McKean--Vlasov SDE \eqref{eq:mfl} in the sense of Definition \ref{def:generalised_solution}.

\begin{theorem} \label{thm:convergence}
For any $i \in \bb{N}$, the sequence $(L^{N, i}, \beta^N)_{N \geq i}$ is tight on the space $C([0, \infty)) \times \cal{M}_1([0, \infty) \times \R)$ and any weak limit point is a solution to McKean--Vlasov SDE \eqref{eq:mfl} in the sense of Definition \ref{def:generalised_solution}.
\end{theorem}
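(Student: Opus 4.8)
The plan is to establish, in turn, a priori estimates on the lowest particle $B^N$, tightness of the pair $(L^{N,i},\beta^N)$, and the identification of subsequential limits as generalised solutions. For the a priori estimates, the analytic core is a bound, uniform in $N$, on the size and oscillation of $B^N = X^{(1)}$. The upper bound is elementary: $B^N_t \le X^{N,1}_t = \xi_1 + W^1_t + L^{N,1}_t$ and, by exchangeability, $\ev[L^{N,1}_t] = \frac1N \sum_j \ev[L^{N,j}_t] = \gamma t$, so $\ev[\sup_{s\le t}(B^N_s)^+] \le \ev[\xi_1] + \ev[\sup_{s\le t}|W^1_s|] + \gamma t$ is bounded uniformly in $N$. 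For the lower bound I would control the lag $\bar X^N_t - B^N_t$ of the bottom particle behind the centre of mass $\bar X^N_t = \frac1N\sum_j X^{N,j}_t = \frac1N\sum_j\xi_j + \frac1N\sum_j W^j_t + \gamma t$ (which concentrates at $\ev[\xi_1] + \gamma t$): the gap vector $(X^{(2)} - X^{(1)}, \dots, X^{(N)} - X^{(N-1)})$ is a reflected diffusion in the nonnegative orthant whose first coordinate carries the strong restoring drift $-\gamma N\,\d t$, and a Lyapunov/comparison argument (comparing the first gap to a reflected Brownian motion with drift $-\gamma N$) gives $\sup_N\ev[\sup_{s\le t}|B^N_s|^p] < \infty$ for some $p > 1$. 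Exploiting the ranked-process dynamics \eqref{eq:atlas_particle} once more — where the exploding drift $\gamma N\,\d t$ of $B^N$ is compensated by the collision local time $-\frac12\,\d L^{X^{(2)} - X^{(1)}}_t$ — yields a uniform modulus of continuity $\ev[\sup_{|s-t|\le\delta,\,s,t\le T}|B^N_t - B^N_s|] \le \omega(\delta)$ with $\omega(\delta)\to 0$.

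Granting these estimates, tightness of $(\beta^N)_{N\ge1}$ on $\cal{M}_1([0,\infty)\times\R)$ reduces, for each $n$, to tightness of the spatial marginal $\frac1n\int_0^n\delta_{B^N_t}\,\d t$ in $\P(\R)$, which follows from the moment bound via Markov's inequality. Each $L^{N,i}$ is continuous and nondecreasing with $\ev[L^{N,i}_t] = \gamma t$, so its one-dimensional marginals are tight; since $L^{N,i}_t - L^{N,i}_s = (X^{N,i}_t - X^{N,i}_s) - (W^i_t - W^i_s)$ and $X^{N,i} - B^N$ is the Skorokhod reflection of $\xi_i + W^i - B^N$ (a uniformly continuous functional of it), the modulus of continuity of $L^{N,i}$ is dominated by those of $W^i$ and $B^N$, giving tightness on $C([0,\infty))$. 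Joint tightness of $(\xi_i, W^i, L^{N,i}, \beta^N)$ is then immediate.

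Pass to a subsequence along which $(\xi_i, W^i, L^{N,i}, \beta^N) \Rightarrow (\xi, W, L, \beta)$ and, by the Skorokhod representation theorem, take these convergences to be almost sure; set $X = \xi + W + L$ and let $\bb{G}$ be the filtration generated by $\beta$. The pathwise bound $L^{N,i}_t \le 3\sup_{s\le t}|B^N_s| + 3|\xi_i| + 3\sup_{s\le t}|W^i_s|$ together with the $L^p$-estimate above gives uniform integrability of $L^{N,i}_t$, whence $\ev[L_t] = \gamma t$; continuity and monotonicity of $L$ are inherited in the limit. Because $\beta^N$ is a permutation-invariant functional of $((\xi_j, W^j))_{j=1}^N$, for bounded continuous $F,\Phi$ one has $\ev[F(\beta^N)\Phi(\xi_i, W^i)] = \ev[F(\beta^N)\,\frac1N\sum_j\Phi(\xi_j, W^j)] \to \ev[F(\beta)]\,\ev[\Phi(\xi, W)]$ by the law of large numbers, so $\bb{G}$ is independent of $(\xi, W)$; the same independence shows $W$ remains a Brownian motion with respect to $\bb{F}$ (generated by $(\xi, W)$ and $\bb{G}$). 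For condition \ref{it:reflection}: at the finite level $\d L^{N,i}_t$ is carried by $\{X^{N,i}_t = B^N_t\}$ and $\sum_j \bf{1}_{\{X^{N,j}_t = B^N_t\}} = 1$, so conditioning on the sub-$\sigma$-algebra $\cal{H}$ generated by the permutation-invariant functionals of the system (which contains $\beta^N$) and using conditional exchangeability yields the exact identity $\ev[\int_0^T \varphi(t, X^{N,i}_t)\,\d L^{N,i}_t \mid \cal{H}] = \gamma N\cdot\frac1N\int_0^T\varphi(t, B^N_t)\,\d t = \gamma\int_{[0,T]\times\R}\varphi\,\d\beta^N$; testing against bounded continuous functionals of $\beta^N|_{[0,T]}$, letting $N\to\infty$, and using the continuity of $(x,\ell)\mapsto\int_0^T\varphi(t, x_t)\,\d\ell_t$ together with the uniform integrability above gives \ref{it:reflection}.

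For condition \ref{it:minimality}, let $\varphi\in C_b([0,\infty)\times\R)$ with $\varphi(t,\cdot)$ nondecreasing and $\tilde L$ be $\bb{F}^{\xi, W}$-adapted with $\ev[\tilde L_t] = \gamma t$; write $\tilde L^{N,j}$ for the same functional applied to $(\xi_j, W^j)$ and $F^N_t = \frac1N\sum_j\tilde L^{N,j}_t$. From $\varphi(t, X^{N,j}_t)\ge\varphi(t, B^N_t)$ and nonnegativity of $\d\tilde L^{N,j}$ we get $\frac1N\sum_j\int_0^T\varphi(t, X^{N,j}_t)\,\d\tilde L^{N,j}_t \ge \int_0^T\varphi(t, B^N_t)\,\d F^N_t$; multiplying by a bounded nonnegative functional $\Psi$ of $\beta^N|_{[0,T]}$, taking expectations and using exchangeability replaces the left side by $\ev[\Psi\int_0^T\varphi(t, X^{N,i}_t)\,\d\tilde L^{N,i}_t]$, which converges to $\ev[\Psi(\beta)\int_0^T\varphi(t, X_t)\,\d\tilde L_t]$, while on the right, since $F^N\to\gamma\cdot$ uniformly on compacts almost surely by the law of large numbers and $B^N$ has the uniform modulus of continuity from the first step, an Abel summation argument shows $\int_0^T\varphi(t, B^N_t)\,\d F^N_t - \gamma\int_{[0,T]\times\R}\varphi\,\d\beta^N\to 0$, so the right side converges to $\gamma\,\ev[\Psi(\beta)\int_{[0,T]\times\R}\varphi\,\d\beta]$; this yields \ref{it:minimality}. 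Finally, $\bb{G}$-adaptedness of $\beta$ and $\bb{F}$-adaptedness of $L$ pass to the limit from the finite-level facts that $\beta^N|_{[0,t]}$ is a function of $(B^N_s)_{s\le t}$ and $L^{N,i}$ is the Skorokhod regulator of $\xi_i + W^i - B^N$. I expect the main obstacle to be the a priori estimates — above all the lower bound on $B^N$ and its uniform modulus of continuity, i.e.\@ making precise that the drift $\gamma N\,\d t$ is cancelled, up to fluctuations of order $\sqrt\delta$ over intervals of length $\delta$, by the collision local time; a secondary difficulty is the justification of the limit transitions in \ref{it:minimality} for a general (non-continuous) adapted functional $\tilde L$, which I would handle by first treating continuous functionals and then approximating.
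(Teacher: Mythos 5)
Your high-level structure (tightness, then identification of limits) matches the paper, and your treatments of Conditions \ref{it:reflection} and \ref{it:minimality} share key ideas with the paper's proof (exchangeability / empirical-measure rewriting, and the pointwise inequality $\varphi(t,X^{N,j}_t)\ge\varphi(t,B^N_t)$). But there is a genuine gap in the tightness step, and it propagates into your argument for Condition \ref{it:minimality}.

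The central claim you rely on --- a uniform-in-$N$ bound on the modulus of continuity of $B^N$, i.e.\@ $\ev[\sup_{|s-t|\le\delta,\,s,t\le T}|B^N_t-B^N_s|]\le\omega(\delta)$ with $\omega(\delta)\to0$, together with an $L^p$ bound on $\sup_{s\le t}|B^N_s|$ --- is not established in your sketch (you acknowledge it as ``the main obstacle'') and is in fact much stronger than anything the paper proves. Such a bound is equivalent to $C$-tightness of $(B^N)$, which would immediately force any limit $\beta$ to be supported on a continuous path; but the paper proves continuity of the boundary only under the additional Assumption \ref{ass:bounded_variation} (Theorem \ref{thm:mfl_exist}), precisely because under only $\ev[\xi_1]<\infty$ one does not expect $(B^N)$ to be $C$-tight. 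The paper therefore proves tightness of $\beta^N$ only on the much weaker space $\cal{M}_1([0,\infty)\times\R)$, via a first-moment bound on $\int_0^n|B^N_t|\,\d t$ obtained by a comparison argument and It\^o's formula. Likewise, the paper's tightness of $L^{N,i}$ on $C([0,\infty))$ does \emph{not} go through a modulus-of-continuity bound on $B^N$: it uses Aldous' criterion combined with a delicate density argument (showing that a positive fraction of particles stay near $X^{N,i}$, via a Girsanov change of measure) to control increments of $L^{N,i}$ directly. Your Lyapunov/comparison sketch for the gap vector is not a substitute for this; the first gap's drift $-\gamma N$ does not straightforwardly yield a uniform modulus bound on the first rank, because the regulator process can move on sets of small measure.

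This gap also undermines your verification of Condition \ref{it:minimality}: you pass from the prelimit inequality with integrator $\d F^N_t$, where $F^N_t=\frac1N\sum_j\tilde L^{N,j}_t\to\gamma t$, to the limit by an Abel-summation argument that explicitly invokes the uniform modulus of continuity of $B^N$. The paper avoids this entirely by normalising the test processes at the particle level so that $\frac1N\sum_j\tilde L^{N,j}_t=\gamma t$ holds \emph{exactly} for every $N$ (using $\bar\Psi_t(x,w,m)=\gamma(\int\Psi_t\,\d m)^{-1}\Psi_t(x,w)$ applied at the empirical measure), so that the right-hand side is literally $\gamma\int\varphi\,\d\beta^N$ and no regularity of $B^N$ beyond $\cal{M}_1$-tightness is needed. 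You should adopt that normalisation, or find an alternative that does not require $C$-tightness of the bottom particle.
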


The major concern with a generalised notion of solution is that it may be so weak that it admits too many candidates as solutions. The most definite way to dispel this worry is to establish uniqueness. This can indeed be achieved.

\begin{theorem} \label{thm:unique_generalised}
McKean--Vlasov SDE \eqref{eq:mfl}, understood in the sense of Definition \ref{def:generalised_solution}, exhibits pathwise uniqueness. For the unique solution $(L, \beta)$ it holds that $L$ is $\bb{F}^{\xi, W}$-adapted and $\beta$ is deterministic. In particular, for any $i \in \bb{N}$, the sequence $(L^{N, i}, \beta^N)_{N \geq i}$ converges weakly to $(L, \beta)$.
\end{theorem}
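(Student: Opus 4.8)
The plan is to bootstrap the elementary identity behind Proposition~\ref{prop:mfl_strong_uniqueness} to the relaxed formulation. The one genuinely new feature is that comparing two generalised solutions requires feeding the local time of one solution into condition~\ref{it:minimality} of the other, and \ref{it:minimality} only admits $\bb{F}^{\xi, W}$-adapted test processes, whereas a generalised solution's $L$ may a priori depend on the auxiliary randomness carried by $\bb{G}$. I would circumvent this by \emph{disintegrating over $\bb{G}$} and thereby reducing to the case of a deterministic boundary measure, where the comparison is essentially that of Proposition~\ref{prop:mfl_strong_uniqueness}.

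In detail, let $(L^1, \beta^1)$ and $(L^2, \beta^2)$ be generalised solutions on the given space, so in particular they share the same $\bb{G}$, which is independent of $(\xi, W)$; after a routine enlargement we may take $\F$ to be generated by $(\xi, W)$ and $\bb{G}$. Fix a regular conditional probability $\pr_g = \pr(\,\cdot \mid \bb{G} = g)$. Since $\bb{G} \perp (\xi, W)$, under $\pr_g$ the process $W$ is still a Brownian motion, $\xi$ keeps its law, $\beta^i$ degenerates to a deterministic measure $\beta^{i, g} \in \cal{M}_1([0, \infty) \times \R)$, and $L^i$ coincides $\pr_g$-a.s.\ with a continuous nondecreasing $\bb{F}^{\xi, W}$-adapted process. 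Conditioning \ref{it:reflection} and \ref{it:minimality} on $\bb{G}$ — and using crucially that the test processes admitted in \ref{it:minimality} are $\bb{F}^{\xi, W}$-adapted, so that both their distribution and the normalisation $\ev[\tilde L_t] = \gamma t$ are insensitive to the conditioning — one verifies that, for a.e.\ $g$, the pair $(L^i, \beta^{i, g})$ satisfies Definition~\ref{def:generalised_solution} on $(\Omega, \F, \pr_g)$ with a deterministic boundary measure (and, in particular, the constraint $\ev_g[L^i_t] = \gamma t$). Keeping the exceptional $g$-set null requires running the conditioning over a countable determining family of test functions and a countable family of test processes and then recovering the general case by continuity and monotonicity.

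On $(\Omega, \F, \pr_g)$ the argument of Proposition~\ref{prop:mfl_strong_uniqueness} now applies with only cosmetic changes. Writing $X^i = \xi + W + L^i$, so that $X^1 - X^2 = L^1 - L^2$, integration by parts gives
\begin{equation*}
    \ev_g\bigl[\lvert X^1_t - X^2_t\rvert^2\bigr] = 2\,\ev_g\biggl[\int_0^t (X^1_s - X^2_s)\,\d L^1_s\biggr] - 2\,\ev_g\biggl[\int_0^t (X^1_s - X^2_s)\,\d L^2_s\biggr].
\end{equation*}
The diagonal terms $\ev_g[\int_0^t X^i_s\,\d L^i_s]$ equal $\gamma\int_{[0, t] \times \R} x\,\d\beta^{i, g}$ by \ref{it:reflection}, applied with $\varphi(s, x) = x$, which one reaches by truncating to the bounded nondecreasing functions $(-M) \vee (x \wedge M)$ and letting $M \to \infty$ (this is where one needs integrability of $\int_0^t \lvert X^i_s\rvert\,\d L^i_s$). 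For the cross terms, applying \ref{it:minimality} to solution $i$ with the \emph{other} solution's local time $L^{3-i}$ as a test process — legitimate under $\pr_g$ — and the same truncations yields $\ev_g[\int_0^t X^{3-i}_s\,\d L^i_s] \geq \gamma\int_{[0, t] \times \R} x\,\d\beta^{i, g}$. All four contributions cancel, so $\ev_g[\lvert X^1_t - X^2_t\rvert^2] \leq 0$, whence $X^1 = X^2$ and $L^1 = L^2$ $\pr_g$-a.s.; substituting back into \ref{it:reflection} forces $\beta^{1, g} = \beta^{2, g}$. Moreover this identifies $(L^i, \beta^{i, g})$ with one fixed $\bb{F}^{\xi, W}$-adapted functional of $(\xi, W)$ and one fixed deterministic measure, independently of $i$ and of $g$ (and, by a transfer-to-a-common-space argument, independently of the underlying probability space). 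Hence $\beta^i$ is deterministic, $L^i$ is $\bb{F}^{\xi, W}$-adapted, and $L^1 = L^2$ $\pr$-a.s.: this is pathwise uniqueness together with the asserted structure.

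The final assertion then follows quickly: by Theorem~\ref{thm:convergence} the sequence $(L^{N, i}, \beta^N)_{N \geq i}$ is tight and each of its weak limit points is a generalised solution of McKean--Vlasov SDE~\eqref{eq:mfl}; by the structural statement just established, every such limit point has $\beta$ equal to the one deterministic measure and $L$ equal to the one $\bb{F}^{\xi, W}$-measurable functional of $(\xi, W)$, so all limit points share the law of $(L, \beta)$, and tightness upgrades this to weak convergence of the whole sequence to $(L, \beta)$. I expect the disintegration of Definition~\ref{def:generalised_solution} over $\bb{G}$ to carry the real technical weight — in particular, checking that the minimality condition \ref{it:minimality} and the normalisation $\ev[\tilde L_t] = \gamma t$ survive the conditioning for a.e.\ $g$ simultaneously, which is what forces the countable-family bookkeeping; the integrability needed to pass from the truncations of $\varphi(s,x) = x$ to $x$ itself is a secondary but genuine point. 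The comparison computation itself is, by the design of Definition~\ref{def:generalised_solution}, a routine transcription of Proposition~\ref{prop:mfl_strong_uniqueness}.
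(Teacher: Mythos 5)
Your plan runs into a circularity that the paper's proof is specifically structured to avoid. To set up the cancellation $\ev_g[\lvert X^1_t - X^2_t\rvert^2] \leq 0$, you feed $L^{3-i}$ as a test process into condition~\ref{it:minimality} for solution $i$. But Definition~\ref{def:generalised_solution} only requires $L^{3-i}$ to be $\bb{F}$-adapted, and you need it to be $\bb{F}^{\xi, W}$-adapted for it to qualify as a test process. Your remedy is to disintegrate over $\bb{G}$, on the premise that $\bb{G}$ is the only source of ``auxiliary randomness'' beyond $(\xi, W)$. That premise is exactly what has to be \emph{proved}: a priori, $\bb{F}$ may carry randomness independent of both $(\xi, W)$ and $\bb{G}$, and $L^{3-i}$ may genuinely depend on it (e.g.\ $L^{3-i}_t$ could be a function of $(\xi, W_{\cdot \wedge t})$ and an extraneous process $U$). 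Conditioning on $\bb{G}=g$ then does \emph{not} make $L^{3-i}$ $\bb{F}^{\xi, W}$-adapted under $\pr_g$. The ``routine enlargement'' to $\F = \sigma(\xi, W) \vee \sigma(\bb{G})$ that you invoke is in fact a \emph{restriction}, and performing it would destroy the adaptedness of any generalised $L$ that does depend on the extra noise — which is precisely the pathology one must rule out. So the argument is using the conclusion (``$L$ is $\bb{F}^{\xi,W}$-adapted'') as a hypothesis, and the cancellation never gets off the ground.

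The paper sidesteps this by never comparing two generalised solutions directly. Instead it takes a single generalised solution $(L, \beta)$ and compares it against a \emph{concretely constructed} approximating process: an $\F^{\xi,W}_{\epsilon}$-measurable $\xi_{\epsilon}$ close to $\xi$ whose law lies in $\cal{I}_{\alpha}$, the corresponding strong solution $b^{\epsilon}$ from Theorem~\ref{thm:mfl_exist}, and the induced $\bb{F}^{\xi,W}$-adapted regulator $L^{\epsilon}$ with $\ev[L^{\epsilon}_t]=\gamma t$. Because $L^{\epsilon}$ is $\bb{F}^{\xi, W}$-adapted \emph{by construction}, it is a legitimate test process in~\ref{it:minimality}, and one estimates $\ev[\varphi(X_t - X^{\epsilon}_t)]$ for convex $\varphi$ directly (using the deterministic barrier $b^{\epsilon}$ to pass from $\partial_x\varphi(X_s - X^{\epsilon}_s)$ to $\partial_x\varphi(X_s - b^{\epsilon}_s)$, which is exactly a test function in the sense of~\ref{it:reflection}/\ref{it:minimality}). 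Letting $\epsilon \to 0$ identifies $X$ as the limit of the $\bb{F}^{\xi,W}$-adapted $X^{\epsilon}$, which \emph{delivers} the adaptedness of $L$ and the determinism of $\beta$ rather than assuming them. This also explains why the paper's proof must invoke Theorem~\ref{thm:mfl_exist}, whereas your proposal does not — that invocation is what supplies a valid reference process. The algebraic cancellation you identified is correct and is indeed the mechanism behind both arguments, but without an $\bb{F}^{\xi,W}$-adapted interlocutor the minimality condition simply cannot be applied.
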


The proof of Theorem \ref{thm:unique_generalised} proceeds via an approximation argument, which exploits that for initial conditions with a sufficiently regular distribution, McKean--Vlasov SDE \eqref{eq:mfl} can in fact be solved in the strong sense. A generalised solution to McKean--Vlasov SDE \eqref{eq:mfl} for an arbitrary initial condition is then uniquely determined as the limit of an approximating sequence of strong solutions with regular initial conditions. 

The most challenging step in this argument consists in establishing the existence of strong solutions for regular initial conditions. Our construction of such strong solutions relies on an alternative representation of McKean--Vlasov SDE \eqref{eq:mfl} through another McKean--Vlasov SDE with reflection. In the course of the derivation of this representation, we shall encounter the so-called supercooled Stefan problem. Before proceeding to this, let us state the existence result for strong solutions of McKean--Vlasov SDE \eqref{eq:mfl}, starting with suitable assumptions.

\begin{assumption} \label{ass:bounded_variation}
We assume that the distribution of $\xi$ has a density given by a c\`adl\`ag function $[0, \infty) \to [0, \infty)$ with finite total variation that does not vanish at zero.
\end{assumption}

A variation of Assumption \ref{ass:bounded_variation} naturally arises in the treatment of the related supercooled Stefan problem by Delarue, Nadtochiy \& Shkolnikov \cite{delarue_stefan_2022}. We shall see further below how it enters our analysis. Under Assumption \ref{ass:bounded_variation} we have the following existence result.

\begin{theorem} \label{thm:mfl_exist}
Let Assumption \ref{ass:bounded_variation} be satisfied. Then McKean--Vlasov SDE \eqref{eq:mfl} has a unique solution in the sense of Definition \ref{def:solution_strong}.
\end{theorem}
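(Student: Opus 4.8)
Since the statement combines existence with uniqueness, and uniqueness is already Proposition~\ref{prop:mfl_strong_uniqueness}, the whole task is to \emph{construct} a continuous $b \define [0,\infty) \to \R$ such that the regulator $L$ in the Skorokhod problem for $\xi + W - b$ satisfies $\ev[L_t] = \gamma t$. The plan is to identify such a $b$ with the free boundary of a supercooled Stefan problem and then to read the solution off from the known theory of that problem, the hypotheses of which are matched by Assumption~\ref{ass:bounded_variation}.

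\emph{Step 1 (reduction to a free-boundary problem).} First I would derive, formally, the equation for the flow of marginal laws. If $b$ solved \eqref{eq:mfl} in the strong sense, then $X_t = \xi + W_t + L_t$ would be reflected above $b_t$, so its law $\mu_t$ is carried by $[b_t,\infty)$; testing against $f \in C_c^\infty(\R)$, using that $\d L$ is carried by $\{X = b\}$ together with $\ev[L_t] = \gamma t$, one gets $\partial_t \langle \mu_t, f\rangle = \tfrac12 \langle \mu_t, f''\rangle + \gamma f'(b_t)$. Writing $\mu_t = v(t,\cdot)\,\d x$ on $(b_t,\infty)$ and integrating by parts, this is equivalent to $\partial_t v = \tfrac12 \partial_{xx} v$ on $\{x > b_t\}$, a Dirichlet condition $v(t, b_t) = 2\gamma$ encoding the normalisation of the local time, and a Stefan condition relating $\dot b_t$ to $\partial_x v(t, b_t)$. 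After a change of variables this is the probabilistic formulation of a one-phase supercooled Stefan problem; equivalently, as announced before the statement, it can be recast as an auxiliary McKean--Vlasov SDE with reflection, in whose analysis the supercooled Stefan problem appears. The first genuine step is to make this correspondence rigorous, so that a strong solution of \eqref{eq:mfl} is in bijection with a solution of the Stefan problem whose free boundary is an honest continuous function.

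\emph{Step 2 (solving the Stefan problem).} Here Assumption~\ref{ass:bounded_variation} is tailor-made: a c\`adl\`ag initial density of finite total variation that does not vanish at the initial location of the boundary is precisely the regularity under which the construction of physical (minimal) solutions of the supercooled Stefan problem of Delarue, Nadtochiy \& Shkolnikov \cite{delarue_stefan_2022} produces a free boundary that does \emph{not} jump. The non-degeneracy at zero yields a lower bound, uniform in time, on the mass the reflected particle keeps near $b_t$, which forbids the boundary from moving instantaneously, while the finite total variation furnishes the compactness and stability needed to pass to the limit in the approximating scheme and to identify the boundary data. Extracting from this a solution with a \emph{continuous} free boundary, and verifying that the associated density flow is genuinely the flow of one-dimensional marginals of a reflected process, is the technical core of the argument, and I expect this to be the main obstacle: it amounts to adapting the quantitative, uniform-in-time estimates of \cite{delarue_stefan_2022} on the mass near the free boundary to the present reflected, mass-conserving setting with only c\`adl\`ag bounded-variation initial data.

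\emph{Step 3 (back-translation and conclusion).} With the continuous $b$ of Step 2, let $(\tilde X, L)$ be the solution of the Skorokhod problem for $\xi + W - b$, which exists and is unique for every continuous driver, and check that the marginal laws of $X = \xi + W + L$ coincide with the density flow built above --- for instance by showing both solve the same well-posed martingale problem. Reversing the computation of Step 1, the Dirichlet value $v(t,b_t) = 2\gamma$ then forces $\ev[L_t] = \gamma t$, so $b$ is a solution of \eqref{eq:mfl} in the sense of Definition~\ref{def:solution_strong}; by Proposition~\ref{prop:mfl_strong_uniqueness} it is the only one.
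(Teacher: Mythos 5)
Your Step~1 is essentially the paper's strategy in outline --- transform to a Stefan-type free boundary problem and back --- but your Step~2 has a genuine gap that the paper's argument is built to fill. After the substitution $\nu_0 = \tfrac{1}{\alpha} - \mu_0$, the initial datum for the supercooled Stefan problem is a \emph{signed} density: all one knows is $\nu_0(0)=0$, $\nu_0 \leq \tfrac{1}{\alpha}$, and $\nu_0(x) \to \tfrac{1}{\alpha}$ as $x \to \infty$. This is the \emph{critical} regime and $\nu_0$ is generally neither nonnegative nor monotone, so the ``physical solution'' machinery of Delarue--Nadtochiy--Shkolnikov \cite{delarue_stefan_2022} does not apply: that paper treats nonnegative (probability) initial data where blow-ups can and do occur, and its bounded-variation hypothesis controls the \emph{jump structure} of the boundary, not its continuity. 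The result that does produce a continuous boundary in the critical regime is \cite[Theorem~1.12]{baker_loc_times_2025}, but it requires $\nu_0$ nondecreasing (equivalently $\mu_0$ nonincreasing), which Assumption~\ref{ass:bounded_variation} does not give. Bridging exactly this gap is the content of Theorem~\ref{thm:repr_reflected_exist}: one passes to the spatial derivative of $\nu_0$, which under Assumption~\ref{ass:bounded_variation} is a signed measure $m$, reformulates as the local-time-interaction equation \eqref{eq:probab_repr_reflected}, and constructs a solution via a Schauder fixed-point argument in which the solution of \cite{baker_loc_times_2025} for the dominating measure $m_\ast$ (with CDF $F_\ast(x) = (\sup_{y \leq x} m([0,y]))_+$) supplies the invariant compact set. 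So ``read the solution off from the known theory'' is precisely what cannot be done.

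Your Step~3 also understates the difficulty. Matching the constructed density flow to the marginals of a reflected process cannot be done by a routine well-posed martingale problem: the driving measure $\d\ell_t$ is only continuous, not absolutely continuous, and the state space is the half-line with a reflecting boundary. The paper proves a bespoke superposition principle for reflected SDEs driven by continuous nondecreasing integrators (Proposition~\ref{prop:superposition}), obtained by an odd reflection to the whole line followed by a time-change to reduce to the Figalli superposition theorem; this is where the condition $\int_0^t \mu_s(\{0\}) \,\d\ell_s = 0$ and the c\`adl\`ag density structure from Assumption~\ref{ass:bounded_variation} become essential to make the reflected Fokker--Planck equation \eqref{eq:fpe_2} well-posed and the occupation-time identity $\ev[L_t] = \tfrac12\int_0^t \mu_s(0)\,\d s$ meaningful. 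In short, your proposal correctly identifies the Stefan reformulation and the role of uniqueness (Proposition~\ref{prop:mfl_strong_uniqueness}), but both of the technical pillars --- existence with a signed, critical initial datum and the reflected superposition principle --- are assumed rather than proved, and they cannot be outsourced to \cite{delarue_stefan_2022}.
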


\begin{remark}
If $\xi$ follows an exponential distribution with rate $2\gamma$, then the solution $b$ to McKean--Vlasov SDE \eqref{eq:mfl} is given by $b_t = \gamma t$ for $t \geq 0$. In that case, the distribution $\L(\tilde{X}_t)$ of the solution $(\tilde{X}, L)$ to the Skorokhod problem for $\xi + W - b$ is stationary. Moreover, one easily verifies that $\textup{Exp}(2\gamma)$ is the only initial distribution for which this is true. Indeed, stationarity implies that $\ev[\xi] = \ev[\tilde{X}_t] = \ev[\xi] - b_t + \gamma t$ for $t \geq 0$, so that $b_t = \gamma t$. However, the unique stationary distribution of a reflected Brownian motion on the positive half-line with constant drift $-\gamma$ is $\textup{Exp}(2\gamma)$. 

We shall not discuss convergence to stationarity here, though we note that one can likely borrow ideas from the proof of \cite[Theorem 1.16]{baker_loc_times_2025} to establish this.
\end{remark}

We will now detail the sequence of arguments involved in the proof of Theorem \ref{thm:mfl_exist}. We start by outlining the connection between the mean-field limit \eqref{eq:mfl} and the supercooled Stefan problem.

\subsection{Connection with the Supercooled Stefan Problem}

Let us begin by providing a heuristic derivation of the Fokker--Planck equation satisfied by the strong formulation of McKean--Vlasov SDE \eqref{eq:mfl}. Suppose that $b$ solves \eqref{eq:mfl}, let $(X - b, L)$ denote the solution of the Skorokhod problem for $\xi + W - b$, and set $\mu_t = \L(X_t)$. Suppose that $b$ is differentiable and that $\mu_t$ has a regular density, which we shall denote by the same symbol. Then, $\mu_t$ satisfies the Fokker--Planck equation
\begin{equation*}
    % \partial_t \mu_t(x) = \dot{b}_t \partial_x \mu_t(x) + \frac{1}{2} \partial_x^2 \mu_t(x)
    \partial_t \mu_t(x) = \frac{1}{2} \partial_x^2 \mu_t(x)
\end{equation*}
for $x \in (b_t, \infty)$. Next, let us heuristically derive an appropriate boundary condition at $x = b_t$. It follows from the occupation time formula (see e.g.\@ \cite[Chapter VI, Corollary 1.6]{revuz_cmbm_1999}) that
\begin{equation*}
    \gamma t = \ev[L_t] = \frac{1}{2} \int_0^t \mu_s(b_s) \, \d s,
\end{equation*}
implying the Dirichlet boundary condition $\mu_t(b_t) = 2\gamma$. Furthermore, from conservation of mass, we can deduce that
\begin{align*}
    0 &= \frac{\d}{\d t} \int_{b_t}^{\infty} \mu_t(x) \, \d x \\
    &= - \dot{b}_t \mu_t(b_t) + \int_{b_t}^{\infty} \frac{1}{2} \partial_x^2 \mu_t(x) \, \d x \\
    &= -\dot{b}_t \mu_t(b_t) - \frac{1}{2} \partial_x \mu_t(b_t) \\
    &= -2\gamma \dot{b}_t - \frac{1}{2} \partial_x \mu_t(b_t),
\end{align*}
where we inserted the boundary condition in the last step. Rearranging yields that $\dot{b}_t = -\frac{1}{4\gamma} \partial_x \mu_t(b_t)$. Thus, $\mu_t$ satisfies the Fokker--Planck equation
\begin{equation} \label{eq:fpe}
\begin{cases}
    \partial_t \mu_t(x) = \frac{1}{2} \partial_x^2 \mu_t(x) &\text{for } x \in (b_t, \infty),\, t \geq 0, \\
    \mu_t(b_t) = \frac{1}{\alpha} &\text{for } t \geq 0, \\
    \dot{b}_t = -\frac{\alpha}{2} \partial_x \mu_t(b_t) &\text{for } t \geq 0,
\end{cases}
\end{equation}
where $\alpha = \frac{1}{2 \gamma}$. This is a moving boundary problem of Stefan-type and appeared in work by Cabezas, Dembo, Sarantsev \& Sidoravicius \cite{cabezas_rank_2019} on a scaling limit for the infinite Atlas model. The infinite Atlas model consists of a semi-infinite collection $(X^i)_{i \in \bb{N}}$ of particles, started from the points of a Poisson process with rate $\lambda > 0$. Similar to \eqref{eq:ps}, the lowest ranked amongst all particles receives a drift of intensity $\gamma$. Applying the diffusive scaling $X^{N, i}_t = \frac{1}{N} X^i_{N^2 t}$, one obtains an infinite version of Equation \eqref{eq:ps}. Cabezas, Dembo, Sarantsev \& Sidoravicius \cite{cabezas_rank_2019} describe the limit of this system as $N \to \infty$ through a version of PDE \eqref{eq:fpe} whose initial condition has constant density $\lambda$ and, consequently, infinite mass. The very recent work by Atar \& Budhiraja \cite{atar_atlas_2025} considers arbitrary locally finite measures as initial densities. We provide a detailed comparison of our results with those of \cite{cabezas_rank_2019} and \cite{atar_atlas_2025} in Subsection \ref{sec:literature}.

At first glance, PDE \eqref{eq:fpe} looks like a classical one-phase Stefan problem with inhomogeneous Dirichlet boundary condition $\mu_t(b_t) = \frac{1}{\alpha}$.
% , but with the inhomogeneous Dirichlet boundary condition $\mu_t(b_t) = \frac{1}{\alpha}$ instead of the more prominent homogeneous condition $\mu_t(b_t) = 0$. 
As such, it may seem more benign than the supercooled Stefan problem with homogeneous Dirichlet boundary condition, where the dynamics of the boundary $b_t$ carry the opposite sign, i.e.\@ $\dot{b}_t = \alpha \partial_x \mu_t(b_t)$. Indeed, for the supercooled Stefan problem, a homogeneous Dirichlet boundary condition implies that the gradient $\partial_x \mu_t(b_t)$ of the solution at the boundary is nonnegative. Hence, the boundary advances towards the particles, which can lead to a critical build-up of mass near the boundary, resulting in a blow-up. 

Now, if the gradient $\partial_x \mu_t(b_t)$ were also nonnegative in PDE \eqref{eq:fpe}, then, since the boundary dynamics have the opposite sign, the boundary would recede from the particles, so a blow-up would not be expected. However, as $\mu_t(b_t) = \frac{1}{\alpha} > 0$ in PDE \eqref{eq:fpe}, the gradient $\partial_x \mu_t(b_t)$ may be both positive and negative, corresponding to the fact that the boundary $b_t$ can both increase and decrease. Hence, the positive feedback present in the supercooled Stefan problem may also arise in PDE \eqref{eq:fpe}. In fact, it turns out that PDE \eqref{eq:fpe} is the supercooled Stefan problem in disguise. This point can be made transparent by applying a simple transformation to $\mu_t$.

Let us define the signed measure $\nu_t$ on $\R$ by $\nu_t(x) = \frac{1}{\alpha} - \mu_t(x)$ if $x \geq b_t$ and $\nu_t(x) = 0$ if $x < b_t$. Here we again identify $\nu_t$ with its density. To be clear, by a signed measure $m$ on some measurable space, we mean that $m = m_+ - m_-$ for measures $m_+$, $m_-$ on the same measurable space such that $m_+$ or $m_-$ is finite. Then, $\nu_t$ satisfies the PDE 
\begin{equation} \label{eq:sfp}
    \begin{cases}
    \partial_t \nu_t(x) = \frac{1}{2} \partial_x^2 \nu_t(x) &\text{for } x \in (b_t, \infty),\, t \geq 0, \\
    \nu_t(b_t) = 0 &\text{for } t \geq 0, \\
    \dot{b}_t = \frac{\alpha}{2} \partial_x \nu_t(b_t) &\text{for } t \geq 0.
\end{cases}
\end{equation}
This is precisely the supercooled Stefan problem. Since $\mu_t(x) \geq 0$, we have the upper bound $\nu_t(x) \leq \frac{1}{\alpha}$. Note, however, that in general, $\nu_t(x)$ may be negative. To get a better feeling for PDE \eqref{eq:sfp}, suppose that $\mu_0$ is nonincreasing and bounded from above by $\frac{1}{\alpha}$. Then, $\nu_0$ is nondecreasing, $\nu_0(0) = \frac{1}{\alpha} - \mu_0(0) < \frac{1}{\alpha}$, $0 \leq \nu_0(x) \leq \frac{1}{\alpha}$ for $x \in [0, \infty)$, and $\lim_{x \to \infty} \nu_0(x) = \frac{1}{\alpha} - \lim_{x \to \infty} \mu_0(x) = \frac{1}{\alpha}$. The inverse relationship between the (asymptotically attained) bound on the initial condition, namely $\frac{1}{\alpha}$, and the feedback coefficient $\alpha$ driving the boundary puts the equation exactly in the critical regime. In this setting, Baker, Hambly \& Jettkant \cite{baker_loc_times_2025} show that PDE \eqref{eq:sfp} has a unique global solution (in a suitably defined weak sense). In particular, the resulting boundary $b$ is a continuous nondecreasing function. If the feedback were any stronger (while $\lim_{x \to \infty} \nu_0(x)$ was kept fixed), then PDE \eqref{eq:sfp} would break down at a finite time horizon \cite[Theorem 1.12]{baker_loc_times_2025}. 

The construction of the global solution in the critical regime in \cite{baker_loc_times_2025} is based on a probabilistic representation of PDE \eqref{eq:sfp} in terms of a reflected McKean--Vlasov SDE. Our goal is to extend this probabilistic representation to the general case, where $\mu_0$ is not necessarily nonincreasing. Then, the only information we have on $\nu_0$ is that $\nu_0(0) < \frac{1}{\alpha}$, $\nu_0(x) \leq \frac{1}{\alpha}$ for $x \in (0, \infty)$, and $\lim_{x \to \infty} \nu_0(x) = \frac{1}{\alpha}$. Thus, we are still in the critical regime, but $\nu_0$ need not be nonnegative or nondecreasing. Neither (global) existence nor uniqueness is now guaranteed (or even considered) by \cite{baker_loc_times_2025}.

\subsection{Reformulation via McKean--Vlasov SDEs with Interaction Through Hitting and Local Times}

To motivate the representation of PDE \eqref{eq:sfp} through a reflected McKean--Vlasov SDE, let us first discuss its more canonical probabilistic representation in terms of a McKean--Vlasov SDE with interaction through hitting times \cite{hambly_mckean_2019, nadtochiy_mean_field_2020, delarue_stefan_2022}. This McKean--Vlasov SDE is given by
\begin{equation} \label{eq:probab_repr_hitting}
    Y^x_t = x + W_t - \alpha \ell_t, \qquad \ell_t = \int_{[0, \infty)} \pr(\tau_y \leq t) \, \d v(y)
\end{equation}
for $x \in [0, \infty)$, where $\tau_x = \inf\{t > 0 \define Y^x_t \leq 0\}$, and the initial condition $v$ is a signed measure. To obtain a representation of PDE \eqref{eq:sfp} in terms of this SDE, we must choose $v = \nu_0$. However, we intend to study McKean--Vlasov SDE \eqref{eq:probab_repr_hitting} in more generality, so we do not necessarily assume that $v$ is of the form $\nu_0 = \frac{1}{\alpha} - \mu_0$ for a probability measure $\mu_0$. Instead, we shall below (implicitly) formulate an assumption on the signed measure $v$, generalising Assumption \ref{ass:bounded_variation}, which guarantees that the integral on the right-hand side of \eqref{eq:probab_repr_hitting} is finite.

Formally, solutions to Equation \eqref{eq:probab_repr_hitting} are defined in the following sense.

\begin{definition} \label{def:probab_repr_hitting}
We say that a continuous function $\ell \define [0, \infty) \to \R$ is a solution to McKean--Vlasov SDE \eqref{eq:probab_repr_hitting} if
\begin{equation} \label{eq:hitting_sol_prop}
    \ell_t = \int_{[0, \infty)} \pr(\tau_x \leq t) \, \d v(x)
\end{equation}
for $t \geq 0$, where $\tau_x = \inf\{t > 0 \define x + W_t - \alpha \ell_t \leq 0\}$ for $x \in [0, \infty)$.

We refer to $v$ as the initial condition of McKean--Vlasov SDE \eqref{eq:probab_repr_hitting}.
\end{definition}

% Note that the integral on the right-hand side of \eqref{eq:hitting_sol_prop} is well-defined. Indeed, by the reflection principle, we have
% \begin{equation*}
%     \pr(\tau_x \leq t) \leq \pr\biggl(\sup_{0 \leq s \leq t} W_s \geq \frac{x - \alpha \ell_t}{\sqrt{t}}\biggr) = 2\pr\biggl(W_1 \geq \frac{x - \alpha \ell_t}{\sqrt{t}}\biggr).
% \end{equation*}
% The bound on the right-hand side above is integrable over $x \in [0, \infty)$, so that
% \begin{equation*}
%     \int_{[0, \infty)} \pr(\tau_x \leq t) \, \d \lvert \nu_0\rvert(x) = \frac{1}{\alpha} \int_{[0, \infty)} \pr(\tau_x \leq t) \, \d x + \int_{[0, \infty)} \pr(\tau_x \leq t) \, \d \mu_0(x) < \infty,
% \end{equation*}
% where $\lvert \nu_0\rvert$ denotes the variation of $\nu_0$. Thus, the integral in \eqref{eq:hitting_sol_prop} does exist and is finite. 
Note that in the setting where $v$ is a probability measure, it is now well-established (cf.\@ \cite{delarue_stefan_2022}) that McKean--Vlasov SDE \eqref{eq:probab_repr_hitting} is indeed a probabilistic representation of the supercooled Stefan problem, PDE \eqref{eq:sfp}, in the following sense: if $\ell$ solves McKean--Vlasov SDE \eqref{eq:probab_repr_hitting} and we define the distributions $\nu_t$, $t \geq 0$, on $\R$ by
\begin{equation*}
    \nu_t(A) = \int_{[0, \infty)} \pr\bigl((Y^x_t + \alpha \ell_t) \in A,\, \tau_x > t\bigr) \, \d v(x)
\end{equation*}
for Borel-measurable $A \subset [0, \infty)$ and $Y^x$ as in Equation \eqref{eq:probab_repr_hitting}, then $\nu_t$ is a solution to PDE \eqref{eq:sfp} and $b_t = \alpha \ell_t$. Of course, some care must be taken in formalising the precise sense in which $\nu_t$ solves PDE \eqref{eq:sfp}, since $\partial_x \nu_t(b_t)$ may not exist for all times. For details, we refer to \cite[Theorem 1.1]{delarue_stefan_2022}. In our analysis, we shall pass directly from McKean--Vlasov SDE \eqref{eq:probab_repr_hitting} to the mean-field limit \eqref{eq:mfl} of the Atlas model, circumventing the need for any regularity analysis of PDE \eqref{eq:sfp}. Let us note that if $\partial_x \nu_t(b_t)$ does exist for all times and is integrable, then 
\begin{equation*}
    \ell_t = \frac{1}{2} \int_0^t \partial_x \nu_s(b_s) \, \d s,
\end{equation*}
so $\ell_t$ is the cumulative flux across the absorbing boundary $b_t = \alpha \ell_t$.

McKean--Vlasov SDE \eqref{eq:probab_repr_hitting} is not yet our target representation for \eqref{eq:mfl}. The target rather corresponds to the spatial derivative of the supercooled Stefan problem. To make sense of this derivative, we must impose additional regularity on $v$. This is where Assumption \ref{ass:bounded_variation} comes into play. If $v = \nu_0$ and Assumption \ref{ass:bounded_variation} is satisfied, then the Hahn decomposition theorem implies that we can find a finite signed measure $m$ on $[0, \infty)$ such that $m([0, x]) = \nu_0(x) = \frac{1}{\alpha} - \mu_0(x)$ for $x \in [0, \infty)$. In our more general setting, where $v$ is an arbitrary signed measure, we shall instead assume that there exists another signed measure $m$ on $[0, \infty)$ such that $m([0, x]) = v(x)$ for $x \in [0, \infty)$. Note that here we identify $v$ with its density. Then, we consider the McKean--Vlasov SDE
\begin{equation} \label{eq:probab_repr_reflected}
    X^x_t = x + W_t - \alpha \ell_t + L^x_t, \qquad \ell_t = \int_{[0, \infty)} \ev[L^y_t] \, \d m(y)
\end{equation}
for $x  \in [0, \infty)$, with reflection at the origin. While McKean--Vlasov SDE \eqref{eq:probab_repr_hitting} features interaction through the law of hitting times, in the above equation, interaction occurs through the law of local times. McKean--Vlasov SDE \eqref{eq:probab_repr_reflected} is understood in the following sense.

\begin{definition} \label{def:probab_repr_reflected}
We say that a continuous function $\ell \define [0, \infty) \to \R$ is a solution to McKean--Vlasov SDE \eqref{eq:probab_repr_reflected} if 
\begin{equation} \label{eq:probab_repr_reflected_sol_prop}
    \ell_t = \int_{[0, \infty)} \ev[L^x_t] \, \d m(x)
\end{equation}
for $t \geq 0$, where $(X^x, L^x)$ solves the Skorokhod problem for $x + W - \alpha \ell$ for $x \in [0, \infty)$.

We refer to $m$ as the initial condition of McKean--Vlasov SDE \eqref{eq:probab_repr_reflected}.
\end{definition}

Under the following assumption on $m$ (and, by extension, $v$), the integrals on the right-hand side of Equations \eqref{eq:hitting_sol_prop} and \eqref{eq:probab_repr_reflected_sol_prop} are finite. 

\begin{assumption} \label{ass:integrability}
We assume that $m$ is a locally finite signed measure on $[0, \infty)$ such that $m(\{0\}) < \frac{1}{\alpha}$, $m([0, x]) \leq \frac{1}{\alpha}$ for $x \geq 0$, and for all $c > 0$, it holds that
\begin{equation} \label{eq:ic_growth}
    \int_{[0, \infty)} e^{-c x^2} \, \d \lvert m\rvert(x) < \infty.
\end{equation}
Here $\lvert m\rvert$ is the variation of $m$.
\end{assumption}

Note that when $m$ is derived from the density of the distribution of $\xi$ using Assumption \ref{ass:bounded_variation}, then it satisfies Assumption \ref{ass:integrability}. However, if we only suppose that Assumption \ref{ass:integrability} holds, then the nonnegative function $x \mapsto (\frac{1}{\alpha} - m([0, x])) = (\frac{1}{\alpha} - v(x))$ need not even integrate to a finite value, so it does not necessarily determine a probability distribution.

As mentioned earlier, McKean--Vlasov SDE \eqref{eq:probab_repr_reflected} should be understood as a probabilistic representation of the derivative of the supercooled Stefan problem. What we mean by that is the following: if the solution $\nu_t$ of PDE \eqref{eq:sfp} were sufficiently regular, we should expect that
\begin{equation}
    \int_A \partial_x \nu_t(x) \, \d x = \int_{[0, \infty)} \pr\bigl((X^x_t + \alpha \ell_t) \in A\bigr) \, \d m(x)
\end{equation}
for Borel measurable $A \subset [0, \infty)$. In the case where $m$ is a finite measure, this is explained in more detail in \cite[Subsection 1.5.2]{baker_loc_times_2025}. Here, we shall exclusively focus on the probabilistic perspective and show that McKean--Vlasov SDEs \eqref{eq:probab_repr_hitting} and \eqref{eq:probab_repr_reflected} have the same set of solutions. This follows from the easily established equality
\begin{equation} \label{eq:local_to_hitting_intro}
    \ev\biggl[\sup_{0 \leq s \leq t} (x + W_s - \alpha \ell_s)_-\biggr] = \int_0^{\infty} \pr\biggl(\inf_{0 \leq s \leq t} (y + W_s - \alpha \ell_s) \leq 0\biggr) \, \d y,
\end{equation}
which connects local times with hitting times (cf.\@ Lemma \ref{lem:integrals_finite}). 

\begin{proposition} \label{prop:equivalence}
Let Assumption \ref{ass:integrability} be satisfied. Then, a function $\ell \in C([0, \infty))$ solves McKean--Vlasov SDE \eqref{eq:probab_repr_hitting} if and only if it solves McKean--Vlasov SDE \eqref{eq:probab_repr_reflected}. 
\end{proposition}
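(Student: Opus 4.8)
The plan is to fix an arbitrary $\ell \in C([0,\infty))$ and, with $\tau_x$ and $(X^x, L^x)$ defined in terms of this $\ell$ and the ambient Brownian motion $W$ as in Definitions \ref{def:probab_repr_hitting} and \ref{def:probab_repr_reflected}, to establish the single identity
\[
\int_{[0,\infty)} \ev\bigl[L^x_t\bigr]\,\d m(x) \;=\; \int_{[0,\infty)} \pr(\tau_x \le t)\,\d v(x)\qquad\text{for every }t\ge 0.
\]
Granting this, the proposition is immediate: $\ell$ solves \eqref{eq:probab_repr_reflected} precisely when $\ell_t$ equals the left-hand side for all $t$, while $\ell$ solves \eqref{eq:probab_repr_hitting} precisely when $\ell_t$ equals the right-hand side for all $t$, and by the identity these two conditions coincide. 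So the whole content of the proposition is contained in the displayed equality.

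To prove the identity I would first recall the explicit one-dimensional Skorokhod map (Definition \ref{def:skorokhod_problem}): since $(X^x, L^x)$ solves the Skorokhod problem for $x + W - \alpha\ell$, there is the pathwise representation $L^x_t = \sup_{0\le s\le t}(x + W_s - \alpha\ell_s)_-$, and, for $x > 0$, the event $\{\tau_x \le t\}$ coincides with $\bigl\{\inf_{0\le s\le t}(x + W_s - \alpha\ell_s) \le 0\bigr\}$ (the exceptional values $x = 0$ form a Lebesgue-null set and play no role after integration against $\d y$ below). The main input is then the local-to-hitting identity \eqref{eq:local_to_hitting_intro}, in the form established in Lemma \ref{lem:integrals_finite}, applied to the path $x + W - \alpha\ell$: for every $x \ge 0$ and $t \ge 0$ it gives
\[
\ev\bigl[L^x_t\bigr] \;=\; \int_{[x,\infty)} \pr(\tau_y \le t)\,\d y,
\]
together with finiteness of all integrals in play. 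The finiteness is exactly where Assumption \ref{ass:integrability} enters: boundedness of $\ell$ on the compact $[0,t]$ yields a Gaussian tail bound $\pr(\tau_y \le t) \le C\,e^{-cy^2}$ for large $y$ (with $c,C$ depending on $t$ and $\ell$), while \eqref{eq:ic_growth} forces $\lvert m\rvert([0,y])$ to grow more slowly than $e^{\varepsilon y^2}$ for every $\varepsilon > 0$; together these give $\int_{[0,\infty)} \pr(\tau_y\le t)\,\lvert m\rvert([0,y])\,\d y < \infty$.

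With these ingredients, integrate the previous display against $\d m(x)$ and apply Tonelli's theorem to the Jordan components $m_+, m_-$ of the signed measure $m$ (the integrand being nonnegative), then recombine — which is legitimate by the finiteness just noted — to interchange the order of integration:
\[
\int_{[0,\infty)} \ev\bigl[L^x_t\bigr]\,\d m(x)
\;=\; \int_{[0,\infty)}\!\!\int_{[x,\infty)} \pr(\tau_y\le t)\,\d y\,\d m(x)
\;=\; \int_{[0,\infty)} \pr(\tau_y\le t)\,m([0,y])\,\d y.
\]
Since $m([0,y]) = v(y)$ by the defining relation between $m$ and $v$, and the measure $v$ appearing in \eqref{eq:probab_repr_hitting} is (per the convention in the text) the one with density $y \mapsto v(y)$, the right-hand side is precisely $\int_{[0,\infty)} \pr(\tau_y\le t)\,\d v(y)$, which is the claimed identity after renaming the dummy variable. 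I expect the only genuine friction to be the bookkeeping around these finiteness and measurability points — in particular justifying Tonelli/Fubini and the arithmetic of signed integrals for the \emph{signed}, merely locally finite measure $m$ — all of which is packaged into Lemma \ref{lem:integrals_finite} and Assumption \ref{ass:integrability}; once those are invoked, what remains is the one-line change of order of integration together with the substitution $m([0,y]) = v(y)$.
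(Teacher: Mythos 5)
Your proof is correct and follows essentially the same route as the paper: the paper's proof of Proposition~\ref{prop:equivalence} simply invokes Lemma~\ref{lem:integrals_finite}, and your argument recapitulates that lemma's content (the local-to-hitting identity from Lemma~\ref{lem:relu_to_cdf}, the Gaussian tail bound against Assumption~\ref{ass:integrability}, and the Tonelli/Fubini exchange applied separately to the Jordan components $m_+$ and $m_-$). The only stylistic difference is that you unfold the lemma inline rather than citing it as a black box.
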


In the statement of the above proposition, it is understood that the initial condition $v$ of \eqref{eq:probab_repr_hitting} and the initial condition $m$ of \eqref{eq:probab_repr_reflected} are related by $v(x) = m([0, x])$ for $x \geq 0$.

Under Assumption \ref{ass:integrability}, we can show that McKean--Vlasov SDE \eqref{eq:probab_repr_reflected} has a solution. % In fact, we will prove a more general existence result that also covers scaling limits of the infinite Atlas model. 
In view of Proposition \ref{prop:equivalence}, this also implies existence for McKean--Vlasov SDE \eqref{eq:probab_repr_hitting}.

\begin{theorem} \label{thm:repr_reflected_exist}
Let Assumption \ref{ass:integrability} be satisfied. Then McKean--Vlasov SDE \eqref{eq:probab_repr_reflected} has a solution.
\end{theorem}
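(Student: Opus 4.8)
The plan is to construct a solution as the limit of a time-discretised (frozen-feedback) scheme, establish uniform bounds using Assumption \ref{ass:integrability}, extract a subsequential limit by a compactness argument, and finally verify that the limit is continuous and satisfies the fixed-point relation \eqref{eq:probab_repr_reflected_sol_prop}. Fix a horizon $T > 0$ and a mesh $h > 0$ with grid points $t_k = kh$, and write $\lfloor u \rfloor_h = t_k$ for $u \in [t_k, t_{k+1})$. Define $\ell^h \in C([0, T])$ by the forward recursion $\ell^h_0 = 0$ and, for $t \in (t_k, t_{k+1}]$,
\begin{equation*}
    \ell^h_t = \int_{[0, \infty)} \ev\bigl[\hat{L}^x_t\bigr] \, \d m(x), \qquad \hat{L}^x_t = \sup_{0 \leq u \leq t} \bigl(\alpha \ell^h_{\lfloor u \rfloor_h} - x - W_u\bigr)_+,
\end{equation*}
where $\hat{L}^x$ is the regulator in the Skorokhod problem for $x + W - \alpha \ell^h_{\lfloor \cdot \rfloor_h}$. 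Since $\ell^h_{\lfloor \cdot \rfloor_h}$ on $[0, t_{k+1}]$ depends only on $\ell^h\vert_{[0, t_k]}$, this determines $\ell^h$ on $[0, T]$; finiteness of the integral follows from the explicit Skorokhod formula and a Gaussian tail estimate together with \eqref{eq:ic_growth} (cf.\ Lemma \ref{lem:integrals_finite}).

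The first substantive step is a set of a priori bounds, uniform in $h$. Using Fubini's theorem and the Hahn-type decomposition $m = m_+ - m_-$, one writes $\ell^h = \ell^{h, +} - \ell^{h, -}$ with $\ell^{h, \pm}_t = \int \ev[\hat{L}^x_t] \, \d m_\pm(x)$, so that $\ell^{h, +}$ and $\ell^{h, -}$ are \emph{nondecreasing}. The bound $m([0, x]) \leq \frac{1}{\alpha}$, together with a Gronwall argument applied to the running maximum $\sup_{u \leq t}(\alpha \ell^h_{\lfloor u \rfloor_h} - W_u)$, yields $\sup_{t \in [0, T]} \lvert \ell^h_t \rvert \leq C_T$ uniformly in $h$; this is precisely the step where the criticality in Assumption \ref{ass:integrability} (both $m([0,x]) \le \tfrac1\alpha$ and $m(\{0\}) < \tfrac1\alpha$) is used to rule out a finite-time blow-up. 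Feeding this bound back in, the Gaussian decay of $\ev[\hat{L}^x_T] = \ev[(\sup_{u \leq T}(\alpha \ell^h_{\lfloor u \rfloor_h} - W_u) - x)_+]$ in $x$ and \eqref{eq:ic_growth} give $\ell^{h, +}_T + \ell^{h, -}_T \leq C_T'$ uniformly in $h$. Hence $(\ell^h)_h$ is bounded in total variation on $[0, T]$, and by Helly's selection theorem a subsequence has $\ell^{h_k, \pm}_t \to \ell^{\pm}_t$ for every $t$, with $\ell^{\pm}$ nondecreasing; set $\ell = \ell^+ - \ell^-$.

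The main obstacle is to prove that $\ell$ is continuous, so that it qualifies as a solution in the sense of Definition \ref{def:probab_repr_reflected}. Passage to the limit in the recursion is itself routine: since $\lfloor \cdot \rfloor_{h_k} \to \mathrm{id}$, the Skorokhod map is $1$-Lipschitz in the supremum norm, and $\ell$ has at most countably many jumps (being a difference of monotone functions), one obtains $\ev[\hat{L}^x_t] \to \ev[L^x_t]$ for a.e.\ $t$ by dominated convergence with the Gaussian majorant from \eqref{eq:ic_growth}, where $(X^x, L^x)$ solves the Skorokhod problem for $x + W - \alpha \ell$; thus $\ell_t = \int \ev[L^x_t] \, \d m(x)$ for a.e.\ $t$, hence for all $t$ after choosing the right-continuous version. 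To exclude jumps one argues as follows: if $\ell$ jumped at some $t_0$, then, because $W$ is continuous and $\inf_{u < t_0}(x + W_u - \alpha \ell_u) \geq 0$ for every $x$ not yet absorbed before $t_0$, absorption exactly at $t_0$ would force $x + W_{t_0} = \alpha \ell_{t_0^-}$, a deterministic constant given $\F^{\xi, W}_{t_0^-}$; since $W_{t_0}$ has a density, this event has probability zero for $\lvert m \rvert$-a.e.\ $x$, so $\ell$ cannot jump. I expect this jump-exclusion argument — and the bookkeeping needed to make the passage-to-the-limit compatible with it, in particular identifying the correct one-sided limits in $\sup_{u \leq t}(\alpha \ell_u - W_u)$ — to be the delicate part; the remaining estimates are soft.

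Finally, with $\ell \in C([0, T])$ solving \eqref{eq:probab_repr_reflected_sol_prop} on $[0, T]$, uniqueness of the Skorokhod problem shows that the restriction to $[0, T]$ of any solution on $[0, T+1]$ is again a solution on $[0, T]$, so a diagonal extraction over $T \in \bb{N}$ produces a single $\ell \in C([0, \infty))$ solving McKean--Vlasov SDE \eqref{eq:probab_repr_reflected}. This completes the proof.
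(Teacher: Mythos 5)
Your approach (frozen-feedback Euler scheme, Helly extraction, jump exclusion) is genuinely different from the paper's, which applies the Schauder fixed-point theorem to the map $\Lambda_t(f) = \int_{[0,\infty)} \ev[\sup_{s \leq t}(x + W_s - \alpha f_s)_-]\, \d m(x)$. However, there is a genuine gap precisely at the step you flag as using criticality: the a priori bound. Writing $\ell^h_t = \int_0^\infty \pr(\hat{\tau}_x \leq t)\, m([0,x])\, \d x$ and using $m([0,x]) \leq \tfrac{1}{\alpha}$ gives $\ell^h_t \leq \tfrac{1}{\alpha}\ev[(\sup_{s\leq t}(\alpha\ell^h_{\lfloor s\rfloor_h} - W_s))_+] \leq (\ell^h)^\ast_t + \tfrac{1}{\alpha}\ev[\sup_{s \leq t}(-W_s)_+]$, which upon taking suprema reads $(\ell^h)^\ast_T \leq (\ell^h)^\ast_T + C_T$ — circular and uninformative. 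This is the defining obstruction of the critical regime: the Gronwall/absorption-of-constant trick only closes when $\alpha\sup_x m([0,x]) < 1$, and the side condition $m(\{0\}) < \tfrac{1}{\alpha}$ does not obviously feed into any Gronwall inequality. Your proposal asserts the bound without supplying the mechanism, and the standard mechanism fails.

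The paper's resolution is to import a known \emph{global} solution as a comparison barrier rather than estimate from scratch: it builds a nonnegative finite measure $m_\ast$ with CDF $F_\ast(x) = (\sup_{0\leq y\leq x} m([0,y]))_+ \leq \tfrac{1}{\alpha}$, invokes \cite[Theorem 1.12]{baker_loc_times_2025} (which handles nonnegative initial data in the critical regime) to obtain a global nondecreasing continuous solution $\ell^\ast$ for $m_\ast$, and then proves that $\Lambda$ preserves $\cal{K} = \{f : f \leq \ell^\ast\}$ via the monotonicity chain $\Lambda_t(f) = \int_0^\infty \pr(\inf_s(x + W_s - \alpha f_s) \leq 0)\, m([0,x])\, \d x \leq \int_0^\infty \pr(\inf_s(x + W_s - \alpha \ell^\ast_s) \leq 0)\, m_\ast([0,x])\, \d x = \ell^\ast_t$. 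Without that external $\ell^\ast$ you have no stable set, and without a uniform bound on $(\ell^h)^\ast_T$ the rest of your scheme (total variation control, Helly, jump exclusion) has nothing to bite on. The Euler-scheme framework could likely be salvaged by proving $\ell^h \leq \ell^\ast$ by induction on grid points using the same comparison, but that ingredient — the global solution from \cite{baker_loc_times_2025} for the dominating nonnegative measure — is essential and is absent from your proposal.
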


The growth condition \eqref{eq:ic_growth} is necessary to ensure that the integral on the right-hand side of \eqref{eq:probab_repr_reflected_sol_prop} is finite (see Lemma \ref{lem:integrals_finite}). In that sense, the assumptions of Theorem \ref{thm:repr_reflected_exist} are essentially minimal for existence in the (sub)critical case. Recall that by critical (subcritical) we mean that the coefficient $\alpha$ appearing in \eqref{eq:probab_repr_reflected} and the maximal mass $\sup_{x \geq 0} m([0, x])$ satisfy $\alpha \sup_{x \geq 0} m([0, x]) \leq 1$ ($\alpha \sup_{x \geq 0} m([0, x]) < 1$). Theorem \ref{thm:repr_reflected_exist} shows that in the (sub)critical case, McKean--Vlasov SDE \eqref{eq:probab_repr_reflected} has a continuous global solution. In the supercritical regime,
\begin{equation*}
    \alpha \sup_{x \geq 0} m([0, x]) > 1,
\end{equation*}
solutions may only exist locally or exhibit jumps. A case where continuous but only local-in-time solutions exist is discussed in \cite[Theorem 1.12]{baker_loc_times_2025} under the assumption that $m$ is a probability measure. The supercooled Stefan problem with nonnegative and integrable initial condition is a special case in which global solutions exist but jumps may occur \cite{hambly_mckean_2019, nadtochiy_mean_field_2020, delarue_stefan_2022}.

The proof of Theorem \ref{thm:repr_reflected_exist} is based on an application of the Schauder fixed-point theorem. Since we are in the critical regime, the main challenge lies in finding a stable set $\cal{K} \subset C([0, \infty))$ for the fixed-point map whose image under the fixed-point map is contained in a compact subset of $\cal{K}$. Due to the criticality, one might worry that elements of $\cal{K}$ grow unboundedly under repeated application of the fixed-point map and, thus, escape from $\cal{K}$. To construct a suitable $\cal{K}$, we employ as an upper bound the global solution to McKean--Vlasov SDE \eqref{eq:probab_repr_reflected} obtained in Baker, Hambly \& Jettkant \cite{baker_loc_times_2025} for initial conditions given by finite measures. The applicability of this theory crucially rests on the fact that $m(\{0\}) < \frac{1}{\alpha}$ and $m([0, x]) \leq \frac{1}{\alpha}$ for $x \geq 0$.

Note that the Schauder fixed-point theorem only provides existence of a solution. For uniqueness, we shall exploit the correspondence of McKean--Vlasov SDE \eqref{eq:probab_repr_reflected} with McKean--Vlasov SDE \eqref{eq:mfl}.
% The proof of Theorem \ref{thm:repr_reflected_exist} proceeds via successive extension of local-in-time solutions constructed through a contraction argument. To ensure that no blow-up can occur in finite time, it must be shown that the signed measure $\int_{[0, \infty)} \ev[\delta_{X^x_t}] \, \d m(x)$, describing the distribution of mass at time $t \geq 0$, remains subcritical. That is, the mass concentrated at the origin stays below the critical threshold $\frac{1}{\alpha}$.
Indeed, as mentioned above, combining Theorem \ref{thm:repr_reflected_exist} with Proposition \ref{prop:equivalence} implies that McKean--Vlasov SDE \eqref{eq:probab_repr_hitting} has a solution $\ell$. Then, tracing back our earlier steps, we would expect that $b = \alpha \ell$ is a solution to McKean--Vlasov SDE \eqref{eq:mfl} in the strong sense. Unlike the equivalence between McKean--Vlasov SDEs \eqref{eq:probab_repr_hitting} and \eqref{eq:probab_repr_reflected}, which is straightforwardly demonstrated using the identity \eqref{eq:local_to_hitting_intro}, showing that McKean--Vlasov SDEs \eqref{eq:mfl} and \eqref{eq:probab_repr_hitting} are equivalent turns out to be more involved. For that, we draw on a superposition principle for reflected SDEs, which we establish in Proposition \ref{prop:superposition} below. 

\begin{theorem} \label{thm:equivalence}
Let Assumption \ref{ass:bounded_variation} be satisfied. Then, a function $b \in C([0, \infty))$ solves McKean--Vlasov SDE \eqref{eq:mfl} if and only if $\frac{1}{\alpha} b$ solves McKean--Vlasov SDE \eqref{eq:probab_repr_hitting}. In particular, McKean--Vlasov SDEs \eqref{eq:mfl}, \eqref{eq:probab_repr_hitting}, and \eqref{eq:probab_repr_reflected} have a unique solution.
\end{theorem}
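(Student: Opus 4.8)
We argue by reducing everything to the single equivalence \eqref{eq:mfl} $\Leftrightarrow$ \eqref{eq:probab_repr_reflected}, since \eqref{eq:probab_repr_hitting} $\Leftrightarrow$ \eqref{eq:probab_repr_reflected} is already Proposition \ref{prop:equivalence}; note that under Assumption \ref{ass:bounded_variation} the c\`adl\`ag finite-variation density of $\xi$ yields a finite signed measure $m$ on $[0,\infty)$ with $m([0,x]) = \frac1\alpha - \mu_0(x)$ (writing $\mu_0 = \L(\xi)$, identified with its density), and $m$ satisfies Assumption \ref{ass:integrability}, so this reduction is legitimate. Fix a continuous $b$ and observe that both \eqref{eq:mfl} and \eqref{eq:probab_repr_reflected} live over the same object: writing $(X^x, L^x)$ for the solution of the Skorokhod problem for $x + W - b$, the "true" particle started at $x$ is $X^x_t + b_t = x + W_t + L^x_t \geq b_t$, the process $X$ in Definition \ref{def:solution_strong} is $X^\xi$ for $\xi \sim \mu_0$ independent of $W$, so $\ev[L_t] = \int_{[0,\infty)} \ev[L^x_t] \, \mu_0(\d x)$, while \eqref{eq:probab_repr_reflected_sol_prop} with $\alpha\ell = b$ reads $\tfrac1\alpha b_t = \int_{[0,\infty)} \ev[L^x_t] \, \d m(x)$. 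Introduce the two measure flows on $[b_t,\infty)$: the law of the representative particle $\mu_t := \L(X^\xi_t + b_t)$, and the "integrated cloud" $\hat\mu_t$ with density $\hat\mu_t(y) := \frac1\alpha - \int_{[0,\infty)} \pr(X^x_t + b_t \leq y) \, \d m(x)$, which by construction satisfies $\hat\mu_0 = \mu_0$. The plan is to establish
\[
  \tfrac1\alpha b \ \text{solves}\ \eqref{eq:probab_repr_reflected}
  \iff \hat\mu_t = \mu_t \ \text{for all}\ t \geq 0
  \iff b \ \text{solves}\ \eqref{eq:mfl}.
\]

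The middle object is where I would invoke the superposition principle for reflected SDEs, Proposition \ref{prop:superposition}, which lets one pass between the measure-flow and the process-level descriptions of reflected SDEs: it guarantees that the time-marginals of the reflected diffusions $X^x + b$ — and hence both $\hat\mu_t$ (their $m$-superposition) and $\mu_t$ — are weak solutions of the heat equation on $\{y > b_t\}$, in a class for which uniqueness (given the appropriate initial and boundary data) holds, which is precisely what sidesteps any classical regularity theory for the supercooled Stefan problem. The two flows differ only at the boundary: $\hat\mu_t$ always satisfies the Dirichlet normalisation $\hat\mu_t(b_t) = \frac1\alpha$, simply because $\pr(X^x_t + b_t \leq b_t) = \pr(X^x_t = 0) = 0$, whereas $\mu_t$ always satisfies the reflecting (Robin-type) condition $\frac12 \partial_y \mu_t(b_t) + \dot b_t \mu_t(b_t) = 0$, which is conservation of mass for the reflected particle $X^\xi$. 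Finally, the occupation-time formula gives $\ev[L_t] = \frac12 \int_0^t \mu_s(b_s) \, \d s$, and, since $\partial_y \hat\mu_t = -\int_{[0,\infty)} (\text{density of } X^x_t + b_t) \, \d m(x)$ by construction, $\partial_t \int_{[0,\infty)} \ev[L^x_t] \, \d m(x) = \frac12 \int_{[0,\infty)} (\text{density of } X^x_t + b_t \text{ at } b_t) \, \d m(x) = -\frac12 \partial_y \hat\mu_t(b_t)$.

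Now the two equivalences fall out. If $\tfrac1\alpha b$ solves \eqref{eq:probab_repr_reflected}, then differentiating $\tfrac1\alpha b_t = \int \ev[L^x_t] \, \d m(x)$, using the last display and $b_0 = 0$, gives $\partial_y \hat\mu_t(b_t) = -\tfrac2\alpha \dot b_t$; combined with $\hat\mu_t(b_t) = \frac1\alpha$ this says $\hat\mu_t$ satisfies the same reflecting condition as $\mu_t$, so the two solve the heat equation on $\{y > b_t\}$ with the same initial datum $\mu_0$ and the same Robin boundary condition, whence $\hat\mu_t = \mu_t$ by uniqueness; then $\mu_t(b_t) = \tfrac1\alpha = 2\gamma$ yields $\ev[L_t] = \frac12\int_0^t \mu_s(b_s)\,\d s = \gamma t$, i.e.\@ $b$ solves \eqref{eq:mfl}. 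Conversely, if $b$ solves \eqref{eq:mfl}, then $\ev[L_t] = \gamma t$ forces $\mu_t(b_t) = 2\gamma = \tfrac1\alpha$ for a.e.\@, hence (by the regularity the boundary inherits from Assumption \ref{ass:bounded_variation}) for every $t > 0$, so $\mu_t$ and $\hat\mu_t$ solve the heat equation with the same initial datum and the same Dirichlet value $\tfrac1\alpha$, whence $\hat\mu_t = \mu_t$; the reflecting condition that $\mu_t$ always satisfies then passes to $\hat\mu_t$, and integrating the identity of the previous paragraph backwards gives $\tfrac1\alpha b_t = \int \ev[L^x_t]\,\d m(x)$, i.e.\@ $\tfrac1\alpha b$ solves \eqref{eq:probab_repr_reflected}. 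Combined with Proposition \ref{prop:equivalence} this proves the stated equivalence of all three SDEs. The "in particular" then follows: under Assumption \ref{ass:bounded_variation} the measure $m$ satisfies Assumption \ref{ass:integrability}, so Theorem \ref{thm:repr_reflected_exist} provides a solution $\ell$ of \eqref{eq:probab_repr_reflected}, which the equivalence converts into a solution $b = \alpha\ell$ of \eqref{eq:mfl} (this is also Theorem \ref{thm:mfl_exist}); and Proposition \ref{prop:mfl_strong_uniqueness} gives uniqueness for \eqref{eq:mfl}, hence, the maps $b \mapsto \tfrac1\alpha b$ being bijections, for \eqref{eq:probab_repr_hitting} and \eqref{eq:probab_repr_reflected} as well.

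The main obstacle is the superposition step: one must legitimately regard $\mu_t$ and $\hat\mu_t$ as weak solutions of the moving-boundary heat equation and invoke uniqueness, in a formulation robust enough not to presuppose classical smoothness of the free boundary, and one must make weak sense of the boundary traces $\mu_t(b_t)$ and $\partial_y \hat\mu_t(b_t)$ used above. This is where Assumption \ref{ass:bounded_variation} is spent, in the spirit of Delarue, Nadtochiy \& Shkolnikov \cite{delarue_stefan_2022}: it propagates enough regularity of the density for these boundary quantities to be controlled and the uniqueness class to be pinned down. A subsidiary point, needed even to state the relevant identities, is the finiteness of the integrals $\int_{[0,\infty)} \ev[L^x_t] \, \d\lvert m\rvert(x)$, which relies on the growth condition \eqref{eq:ic_growth} — automatic under Assumption \ref{ass:bounded_variation} — via Lemma \ref{lem:integrals_finite}.
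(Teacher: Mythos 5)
The overall skeleton — two measure flows $\mu_t$ and $\hat\mu_t$, the superposition principle, Proposition~\ref{prop:equivalence} for the link to \eqref{eq:probab_repr_reflected}, and Proposition~\ref{prop:mfl_strong_uniqueness} plus Theorem~\ref{thm:repr_reflected_exist} for the ``in particular'' — matches the paper, but the way you propose to run the core argument contains a genuine gap that you yourself flag, and it is not the gap the paper closes. Three specific points.

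First, you never verify the hypotheses of Proposition~\ref{prop:superposition} for the superposed flow $\hat\mu_t$. The superposition principle does \emph{not} say that time-marginals of reflected diffusions solve the heat equation in a class with boundary-data uniqueness; it is a one-way statement taking a solution of the Fokker--Planck equation \eqref{eq:fpe_reflected} (tested against $\varphi$ with $\partial_x\varphi(0)=0$, and satisfying the side condition $\int_0^t \mu_s(\{0\})\,\d f_s=0$) into a weak solution of the reflected SDE \eqref{eq:sde_super}. To apply it you must show (i) that $\hat\mu_t$ solves \eqref{eq:fpe_reflected} — this is the paper's It\^o computation for each $\nu^x_t$ integrated against $\d m$, where the key cancellation of the $\varphi(0)$-terms is precisely the solution property of $\ell$; (ii) that $\hat\mu_t$ is a nonnegative probability measure with a c\`adl\`ag density — this is where Assumption~\ref{ass:bounded_variation} is actually spent, via the explicit formula \eqref{eq:exp_for_density}; and (iii) the mass-at-zero condition. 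None of these steps appear in your proposal.

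Second, the uniqueness you invoke is of the wrong kind. You appeal to a uniqueness theorem for the moving-boundary heat equation with prescribed Dirichlet or Robin traces at $b_t$, which presupposes existence of those traces and, for the Robin condition, differentiability of $b$ — none of which is available (indeed, the boundary $b=\alpha\ell$ is a priori only continuous). The uniqueness the paper actually uses is much more robust: once $\hat\mu_t$ is shown to solve the Fokker--Planck equation in the admissible class, Proposition~\ref{prop:superposition} produces a weak solution of $\d \tilde X_t = -\alpha\,\d\ell_t + \d W_t + \d L_t$ with marginals $\hat\mu_t$, and this SDE is simply the Skorokhod problem for $\xi - \alpha\ell + W$, which is pathwise unique. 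Hence $\hat\mu_t = \mathcal{L}(\tilde X_t) = \mathcal{L}(X^\xi_t) = \mu_t$ with no boundary-trace considerations whatsoever. Your framing in terms of ``Dirichlet vs. Robin boundary conditions'' misidentifies both the uniqueness class and the role of Assumption~\ref{ass:bounded_variation}.

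Third, the converse direction you sketch (from \eqref{eq:mfl} to \eqref{eq:probab_repr_reflected}) compounds these problems, as it starts from $\ev[L_t]=\gamma t$ and immediately reads off a Dirichlet trace $\mu_t(b_t)=\frac1\alpha$ via the occupation-time formula, which again requires a density for $\mu_t$ at the boundary that has not been constructed. The paper sidesteps this entirely: the ``only if'' direction is deduced from the ``if'' direction combined with existence (Theorem~\ref{thm:repr_reflected_exist} and Proposition~\ref{prop:equivalence}) and uniqueness (Proposition~\ref{prop:mfl_strong_uniqueness}) — you do mention this route in passing, and it is the one that works.
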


The last statement of the above theorem follows upon combining the existence result for McKean--Vlasov SDE \eqref{eq:probab_repr_reflected} (Theorem \ref{thm:repr_reflected_exist}), the equivalence between McKean--Vlasov SDEs \eqref{eq:probab_repr_reflected} and \eqref{eq:probab_repr_hitting} (Proposition \ref{prop:equivalence}), and the equivalence between McKean--Vlasov SDEs \eqref{eq:probab_repr_hitting} and \eqref{eq:mfl} (established by the above theorem) with the uniqueness of McKean--Vlasov SDE \eqref{eq:mfl} in the strong sense (Proposition \ref{prop:mfl_strong_uniqueness}). 

Under the weaker Assumption \ref{ass:integrability}, it is not necessarily the case that the function $x \mapsto (\frac{1}{\alpha} - m([0, x]))$ integrates to one. Thus, we cannot connect McKean--Vlasov SDEs \eqref{eq:probab_repr_reflected} and \eqref{eq:probab_repr_hitting} with the mean-field limit \eqref{eq:mfl}, whose initial condition is a probability distribution, and transfer the uniqueness result for the latter to the former two equations.

\subsection{Related Literature} \label{sec:literature}

There is a rich literature on ranked-based particle systems both with a finite and a countably infinite number of particles, dating back to works of Harris \cite{harris_collisions_1965} and Sznitman \cite{sznitman_poc_1991} on rankings of Brownian motion. Much attention has been dedicated to the semi-infinite Atlas model, consisting of a countable collection of particles, started from the points of a Poisson process, that diffuse according to independent Brownian motions, with only the lowest ranked particle assigned a positive drift. Pal \& Pitman \cite{pal_ranked_bm_2008} study the long-range behaviour of the spacings between particles, showing convergence to a stationary distribution given by independent exponential distributions. They also hypothesise that a suitable scaling of the $i$th ranked particle converges to a fractional Brownian motion with Hurst parameter $\frac{1}{4}$. The validity of this conjecture follows from results by Dembo \& Tsai \cite{dembo_atlas_2017} who prove that rescaled fluctuations of the Atlas model around its equilibrium follow a stochastic heat equation with Neumann boundary condition.  

As we already alluded to above, Cabezas, Dembo, Sarantsev \& Sidoravicius \cite{cabezas_rank_2019} analyse the large $N$ limit of the semi-infinite analogue of \eqref{eq:ps}, where particles are started from a constant-rate Poisson process. Due to the simple initial profile, they can explicitly determine the system's limiting density, which solves a Stefan-type moving boundary problem. Depending on whether the rate of the Poisson process lies within $(0, 2\gamma) = (0, \frac{1}{\alpha})$ or is above $2\gamma = \frac{1}{\alpha}$, the deterministic limit of the Atlas particle, which plays the role of the moving boundary, follows a square-root trajectory with a positive or negative coefficient. 

The more recent work by Atar \& Budhiraja \cite{atar_atlas_2025}, generalises \cite{cabezas_rank_2019} to varying initial configurations provided by locally finite measures. They characterise the limiting profile through a moving boundary problem involving measures that generalises the one-phase Stefan problem from \cite{cabezas_rank_2019}. In particular, the limit of the Atlas particle is no longer shown to follow a deterministic trajectory but is instead represented by a measure. This is analogous and served as an inspiration for our generalised formulation of McKean--Vlasov SDE \eqref{eq:mfl} from Definition \ref{def:generalised_solution}. Only if the initial configuration $\mu_0$ is sufficiently dense in the sense that $\d \mu_0(x) \geq \lambda \, \d x$ for some $\lambda > 0$, a deterministic trajectory can be recovered. This result is obtained by exploiting the fact that the gaps of the particle system with the dense initial profile $\mu_0$ are stochastically dominated by those from the system with the constant initial density $\lambda$. Then, one can proceed similarly to \cite{cabezas_rank_2019} to conclude. Note that our Assumption \ref{ass:bounded_variation} does not allow for a comparison argument, so the proof techniques we employ are quite different from those in \cite{atar_atlas_2025}. Let us emphasise though that our assumptions are not weaker than the ones from \cite{atar_atlas_2025}, so the results are (partially) complementary.  

Finite ranked-based particle systems were studied in much more generality than their infinite counterparts, with rank-dependence both in the drift and diffusion coefficients of all particles being considered. The first comprehensive existence result for weak solutions of such equations was provided by Bass \& Pardoux \cite{bass_uniqueness_piecewise_1987}, though their proper motivation comes from piecewise filtering. The applicability to particle systems with interaction through ranks is rather coincidental. Subsequently, Ichiba, Karatzas \& Shkolnikov \cite{ichiba_ranked_2012} showed that the weak solutions are in fact strong up until the first time three particles collide. Criteria for the absence and presence of such collision events were derived in \cite{ichiba_ranked_2010, ichiba_ranked_2012, sarantsev_triple_2015}. Banner, Fernholz \& Karatzas \cite{banner_atlas_2005} study the long-term behaviour of generalised Atlas models with finitely many particles. Corresponding results for the simple semi-infinite Atlas model can be found in \cite{pal_ranked_bm_2008, sarantsev_atlas_2017}.

Several articles consider propagation of chaos for particle systems where the rank-dependence of the coefficients is expressed through a dependence on the empirical cumulative distribution function (CDF) of the particles. Note that this structural requirement precludes the Atlas model due to the exploding drift of lowest ranked particle. In many cases, the limit of the empirical CDF can be shown to satisfy a generalised one-dimensional porous medium equation. The solution of this equation describes the CDF of the representative particle in the mean-field limit. One of the earliest works in this direction is that of Jourdain \cite{jourdain_pme_2000}, who studies particle approximations of the classical porous medium equation. Jourdain establishes propagation of chaos for the approximating ranked-based particle system and proves that the limit of the empirical CDF is indeed a solution of the classical porous medium equation. These results were extended to a broader range of coefficients by Jourdain \& Reygner \cite{jourdain_poc_rank_2013}. In a preceding work, Shkolnikov \cite{shkolnikov_ranked_based_2012} proves propagation of chaos for initial configurations for which the gaps between consecutive particle ranks are stationary. Since then various extensions have appeared: large deviations \cite{dembo_ranked_ld_2016}, a central limit theorem \cite{kolli_clt_rank_2018}, and the addition of common noise \cite{kolli_ranked_common_2019, shkolnikov_rank_2024}.

Lastly, let us comment on the connection between the Atlas model and Stefan-type moving boundary problems (see e.g.\@ \cite{cannon_sp_1970, sherman_sp_1970, fasano_fbp_1977, chayes_hdl_1996} and references therein). As discussed earlier, this connection was already explored in \cite{cabezas_rank_2019} for initial densities with constant value $\lambda > 0$. If $\lambda > 2\gamma = \frac{1}{\alpha}$, the boundary in PDE \eqref{eq:fpe} is nonincreasing, corresponding to the classical Stefan problem describing the melting of a frozen liquid. For $\lambda \in (0, 2\gamma)$, the boundary advances into the domain, as is the case for the supercooled Stefan problem that models freezing of a supercooled liquid. For nonconstant initial conditions, treated in \cite{atar_atlas_2025} and the present work, the boundary need not be monotonic, yielding a mixture of both models. While \cite{cabezas_rank_2019} and \cite{atar_atlas_2025} work directly with PDE \eqref{eq:fpe} (or in the latter case with an integrated and relaxed version introduced by Atar \cite{atar_weak_fbp_2025}), we proceed via the transformed PDE \eqref{eq:sfp}. The latter leads us to a precise relationship between the Atlas model and the supercooled Stefan problem from \cite{hambly_mckean_2019, nadtochiy_mean_field_2020}, which admits the convenient probabilistic representation \eqref{eq:probab_repr_hitting}. Solutions to this representation can be constructed by extending ideas from Baker, Hambly \& Jettkant \cite{baker_loc_times_2025}.

\subsection{Main Contributions and Structure of the Paper}

Let us conclude this section with a discussion of our contributions and an outline of the paper. The first contribution we wish to highlight is our new probabilistic interpretation of the mean-field limit of the Atlas model as a novel type of reflected SDE with a constraint on the mean-growth of the regulator process. We introduce a relaxed formulation of this SDE and rigorously connect it with various existing models.

Our analysis begins in Section \ref{sec:finite} with a study of the finite particle system \eqref{eq:ps}. We discuss its reformulation through the reflected SDE \eqref{eq:ps_reflected}, for which we establish a comparison result. This turns out to be useful when we subsequently prove the tightness of the particle system. The main challenge here lies in obtaining sufficient control on the increments of the processes
\begin{equation*}
    L^{N, i} = \int_0^{\cdot} \gamma N \bf{1}_{\{X^{N, i}_t = B^N_t\}} \, \d t,
\end{equation*}
for $N \geq i$, that guarantee tightness on $C([0, \infty))$. We achieve this by proving that as $N \to \infty$, the distribution of particles becomes sufficiently dense, so that none of the particles can be the Atlas particle for sustained periods of time.

Section \ref{sec:mfl} serves a dual purpose. First, we show that any limit point of the particle system \eqref{eq:ps} is a solution to McKean--Vlasov SDE \eqref{eq:mfl} in the generalised sense, thereby completing the proof of Theorem \ref{thm:convergence}. Then, we prove the uniqueness result for generalised solutions, Theorem \ref{thm:unique_generalised}, drawing on approximations by solutions of \eqref{eq:mfl} in the strong sense. Unlike \cite{atar_atlas_2025}, our arguments are probabilistic in nature, avoiding an analysis of PDEs \eqref{eq:fpe} and \eqref{eq:sfp}.

In Section \ref{sec:hitting_reflected}, we analyse the McKean--Vlasov SDEs \eqref{eq:probab_repr_hitting} and \eqref{eq:probab_repr_reflected} with interaction through hitting and local times. First, we show the equivalence between McKean--Vlasov SDEs \eqref{eq:probab_repr_hitting} and \eqref{eq:probab_repr_reflected} (Proposition \ref{prop:equivalence}). Then, we construct a solution to the latter, giving Theorem \ref{thm:repr_reflected_exist}, and yielding, simultaneously, a solution to the former.

The objective of the final Section \ref{sec:strong_existence} is to establish the existence of solutions to McKean--Vlasov SDE \eqref{eq:mfl} in the strong sense (Theorem \ref{thm:mfl_exist}). This is done by showing that McKean--Vlasov SDE \eqref{eq:probab_repr_hitting} and the strong formulation of the mean-field limit are equivalent (Theorem \ref{thm:equivalence}). Since existence for the former equation was derived in Section \ref{sec:hitting_reflected}, we thus obtain a solution to the mean-field limit. The equivalence result is based on our superposition principle for reflected SDEs, Proposition \ref{prop:superposition}.

\section{The Finite Atlas Model} \label{sec:finite}

In this section, we analyse the particle system \eqref{eq:ps} through the lens of the reflected SDE \eqref{eq:ps_reflected} whose boundary is determined by a constraint on the growth of the empirical average of the regulator processes. We prove several comparison results for SDE \eqref{eq:ps_reflected}, which, in particular, imply uniqueness. These are subsequently used to establish tightness of the particle system. 

Let us begin by defining solutions to SDE \eqref{eq:ps_reflected} for a given $N \geq 1$. For that we fix a probability space $(\Omega, \F, \pr)$ equipped with a filtration $\bb{F}^N = (\F^N_t)_{t \geq 0}$, independent $\F^N_0$-measurable random variables $\xi_1$,~\ldots, $\xi_N$, and independent $\bb{F}^N$-Brownian motions $W^1$,~\ldots, $W^N$. % We denote by $\bb{F}^N = (\F^N_t)_{t \geq 0}$ the filtration given by $\F^N_t = \sigma(\xi_i, W^i_s \define s \in [0, t],\, i \in [N])$.
For notational convenience, we set $[N] = \{1, \dots, N\}$.

\begin{definition} \label{def:ps_reflected}
We say that a continuous $\bb{F}^N$-adapted process $B = (B_t)_{t \geq 0}$ is a solution of SDE \eqref{eq:ps_reflected} if the solutions $(\tilde{X}^i, L^i)$, $i \in [N]$, of the Skorokhod problem for $\xi_i + W^i - B$ satisfy $\frac{1}{N} \sum_{i = 1}^N L^i_t = \gamma t$ for $t \geq 0$.
\end{definition}

Let $X^1$,~\ldots, $X^N$ be a solution to the Atlas model \eqref{eq:ps}, guaranteed to exist by the theory of Bass \& Pardoux \cite{bass_uniqueness_piecewise_1987}. Note that this solution is a priori weak, so we tacitly assume that $(\Omega, \F, \bb{F}^N, \pr)$ is sufficiently rich to support $X^1$,~\ldots, $X^N$. We already saw in the introduction that setting $B^N = X^{(1)}$ yields a solution to SDE \eqref{eq:ps_reflected}. Moreover, the corresponding solutions of the Skorokhod problems for $\xi_i + W^i - B^N$ is given by $(X^i - B^N, L^i)$, where $L^i = (L^i_t)_{t \geq 0}$ is defined as $L^i_t = \int_0^t \gamma N \bf{1}_{\{X^i_s = B^N_s\}} \, \d s$. According to the following comparison result, this is also the only solution to SDE \eqref{eq:ps_reflected}.

\begin{proposition} \label{prop:comparison}
For $k = 1$, $2$, let $N_k \geq 1$ and let $B^k$ be a solution to SDE \eqref{eq:ps_reflected} with $\F^{N_k}_0$-measurable initial conditions $\xi^k_1$,~\ldots, $\xi^k_{N_k}$ and denote the solution to the Skorokhod problem for $\xi^k_i + W^i - B^k$ by $(X^{k, i} - B^k, L^{k, i})$, $i = 1$,~\ldots, $N_k$. 
\begin{enumerate}[noitemsep, label = (\roman*)]
    \item \label{it:comparison} If $N_1 = N_2$ and $\xi^1_i \leq \xi^2_i$ for $i \in [N_1]$, then we have that $X^{1, i}_t \leq X^{2, i}_t$ and $B^1_t \leq B^2_t$ for $t \geq 0$ and $i \in [N_1]$. In particular, SDE \eqref{eq:ps_reflected} exhibits pathwise uniqueness.
    \item \label{it:comparison_local} If $N_1 = N_2$, $\xi^1_i \leq \xi^2_i$ for some $i \in [N_1]$, and $\xi^1_j = \xi^2_j$ for $j \in [N_1] \setminus \{i\}$, then $L^{1, i}_t \geq L^{2, i}_t$ for $t \geq 0$.
    \item \label{it:comparison_number} If $N_1 \leq N_2$ and $\xi^1_i = \xi^2_i$ for $i \in [N_1]$, then we have that $L^{1, i}_t \geq L^{2, i}_t$ for $t \geq 0$ and  $i \in [N_1]$.
\end{enumerate}
\end{proposition}

\begin{proof}
We begin by establishing \ref{it:comparison}. The proof is similar to that of Proposition \ref{prop:mfl_strong_uniqueness}, which concerns the mean-field limit. For notational simplicity, set $N = N_1 = N_2$ and let $B^k$, $X^{k, i}$, and $L^{k, i}$, $i \in [N]$ and $k \in \{1, 2\}$ be as in the statement of the proposition. Then by the fundamental theorem of calculus, we have
\begin{align} \label{eq:pos_part}
    (X^{1, i}_t - X^{2, i}_t)_+^2 &= (\xi^1_i - \xi^2_i)_+^2 + 2\int_0^t (X^{1, i}_s - X^{2, i}_s)_+ \, \d (L^{1, i}_s - L^{2, i}_s) \notag \\
    &\leq 2\int_0^t (B^1_s - B^2_s)_+ \, \d (L^{1, i}_s - L^{2, i}_s),
\end{align}
where we used that $\xi^1_i \leq \xi^2_i$ and the properties of the Skorokhod problem in the second line. Summing the above over $i = 1$,~\ldots, $N$ implies that
\begin{equation}
    \sum_{i = 1}^N (X^{1, i}_t - X^{2, i}_t)_+^2 \leq 2\sum_{i = 1}^N \int_0^t (B^1_s - B^2_s)_+ \, \d (L^{1, i}_s - L^{2, i}_s) = 0
\end{equation}
since $\sum_{i = 1}^N L^{1, i}_t = \sum_{i = 1}^N L^{2, i}_t = \gamma N t$. It follows that $X^{1, i}_t - X^{2, i}_t \leq 0$ for $t \geq 0$ and $i \in [N]$. 

In order to show that the barriers are ordered as well, we shall exploit the correspondence of SDE \eqref{eq:ps_reflected} with the Atlas model \eqref{eq:ps}. To argue this way, we must first verify that the solution to SDE \eqref{eq:ps_reflected} provided by the Atlas model is the only one. To achieve this, we simply prove uniqueness of SDE \eqref{eq:ps_reflected}. Note that if the initial conditions $\xi^1_i$ and $\xi^2_i$ coincide for $i \in [N]$, we can apply the previous comparison argument in both directions, implying that $X^{1, i}_t = X^{1, i}_t$ and, therefore,
\begin{equation*}
    L^{1, i}_t = X^{1, i}_t - \xi^1_i - W^i_t = X^{2, i}_t - \xi^2_i - W^i_t = L^{2, i}_t
\end{equation*}
for $t \geq 0$ and for $i \in [N]$. Consequently, we have
\begin{equation*}
    \gamma \int_0^t B^1_s \, \d s = \frac{\gamma}{N} \sum_{i = 1}^N \int_0^t X^{1, i}_s \, \d L^{1, i}_s = \frac{\gamma}{N} \sum_{i = 1}^N \int_0^t X^{2, i}_s \, \d L^{2, i}_s = \gamma \int_0^t B^2_s \, \d s.
\end{equation*}
From the continuity of the barriers, we can therefore deduce that $B^1_t = B^2_t$ for $t \geq 0$. Thus, SDE \eqref{eq:ps_reflected} exhibits pathwise uniqueness and its unique solution is the one induced by the Atlas model.

Let us now finish the proof of the comparison principle. Since both solutions are the ones coming from the Atlas model, we have that
\begin{equation*}
    B^1_t = \min_{i \in [N]} X^{1, i}_t \leq \min_{i \in [N]} X^{2, i}_t = B^2_t
\end{equation*}
for $t \geq 0$. This concludes the proof of \ref{it:comparison}.

Let us show \ref{it:comparison_local} next, again setting $N = N_1 = N_2$. Similarly to \eqref{eq:pos_part}, we have for $k 
\geq 1$ and $t \geq 0$ that
\begin{align*}
    (X^{2, j}_t - X^{1, j}_t)^{2k} &= (\xi^2_j - \xi^1_j)^{2k} + 2k \int_0^t (X^{2, j}_s - X^{1, j}_s)^{2k - 1} \, \d (L^{2, j}_s - L^{1, j}_s) \\
    &\leq (\xi^2_j - \xi^1_j)^{2k} + 2k \int_0^t (B^2_s - B^1_s)^{2k - 1} \, \d (L^{2, j}_s - L^{1, j}_s)
\end{align*}
We sum this inequality over $j \in [N]$ and then raise both sides to the power $\frac{1}{2k}$ to obtain
\begin{equation*}
    \biggl(\sum_{j = 1}^N (X^{2, j}_t - X^{1, j}_t)^{2k}\biggr)^{\frac{1}{2k}} \leq \biggl(\sum_{j = 1}^N (\xi^2_j - \xi^1_j)^{2k} \biggr)^{\frac{1}{2k}}
\end{equation*}
Letting $k \to \infty$ in both expressions yields
\begin{equation*}
    X^{2, i}_t - X^{1, i}_t \leq \max_{j \in [N]} \lvert X^{2, j}_t - X^{1, j}_t \rvert \leq \max_{j \in [N]} \vert \xi^2_j - \xi^1_j\rvert = \xi^2_i - \xi^1_i.
\end{equation*}
From this we deduce that
\begin{equation*}
    \xi^2_i - \xi^1_i \geq X^{2, i}_t - X^{1, i}_t = \xi^2_i - \xi^1_i + L^{2, i}_t - L^{1, i}_t,
\end{equation*}
which finally implies that $L^{1, i}_t \geq L^{2, i}_t$. 

It remains to prove the last statement of the proposition. For $K \geq 1$, we introduce an auxiliary system with initial conditions $\tilde{\xi}^K_i$, $i \in [N_2]$, defined by $\tilde{\xi}^K_i = \xi^2_i$ for $i \in [N_1]$ and $\tilde{\xi}^K_i = \xi^2_i \lor K$ for $i \in [N_2] \setminus [N_1]$. We denote the corresponding solution to SDE \eqref{eq:ps_reflected} by $\tilde{B}^K$ and let $(\tilde{X}^{K, i} - \tilde{B}^K, \tilde{L}^{K, i})$ solve the Skorokhod problem for $\tilde{\xi}^K_i + W^i - \tilde{B}^K$. Since $\tilde{\xi}^K_i \geq \xi^2_i$ for $i \in [N_2]$, it holds by \ref{it:comparison} that $\tilde{X}^{K, i}_t \geq X^{2, i}_t$ for $t \geq 0$. But the initial conditions of the particles in $[N_1]$ coincide for the two systems of size $N_2$, so from $\tilde{X}^{K, i}_t \geq X^{2, i}_t$ we deduce that $\tilde{L}^{K, i}_t \geq L^{2, i}_t$ for $i \in [N_1]$. 

Let us now define $\varrho_K$ to be the first time $t \geq 0$ such that $\tilde{X}^{K, i}_t = \tilde{B}^K_t$ for some $i \in [N_2] \setminus [N_1]$. By pathwise uniqueness of SDE \eqref{eq:ps_reflected}, we have that $(\tilde{X}^{K, i}_t, \tilde{L}^{K, i}_t) = (X^{1, i}_t, L^{1, i}_t)$ for $t \in [0, \varrho_K)$. Thus, if we can show that $\varrho_K \to \infty$ a.s.\@ as $K \to \infty$, then it follows that
\begin{equation*}
    L^{1, i}_t = \lim_{K \to \infty} L^{1, i}_{t \land \varrho_K} = \lim_{K \to \infty} \tilde{L}^{K, i}_{t \land \varrho_K} \geq \lim_{K \to \infty} L^{2, i}_{t \land \varrho_K} = L^{2, i}_t
\end{equation*}
for $t \geq 0$ and $i \in [N_1]$. To see that $\varrho_K \to \infty$, we again appeal to the uniqueness of SDE \eqref{eq:ps_reflected}, whereby $\tilde{B}^K_t = B^1_t$ for $t \in [0, \varrho_K)$. Thus, on
\begin{equation*}
    \{\varrho_K > 0\} = \Bigl\{K > \min_{j \in [N_1]} \xi^1_j\Bigr\},
\end{equation*}
we can alternatively write $\varrho_K$ as the first time $t \geq 0$ that $\tilde{X}^{K, i} = B^1_t$ for some $i \in [N_2] \setminus [N_1]$. Now, the process $B^1$ is a.s.\@ locally bounded and, for $i \in [N_2] \setminus [N_1]$, we have $\tilde{X}^{K, i}_t \geq K + W^i_t$ for $t \geq 0$, so the first time that $\tilde{X}^{K, i}$ and $B^1$ meet indeed diverges as $K \to \infty$. This concludes the proof.
\end{proof}

Armed with Proposition \ref{prop:comparison}, we can establish tightness of the particle system.

\subsection{Tightness of the Atlas Model} \label{sec:tightness}

Let $B^N = (B^N_t)_{t \geq 0}$ be the unique solution to SDE \eqref{eq:ps_reflected} for $N \geq 1$ and let $(X^i - B^N, L^i)$ be the solution to the Skorokhod problem for $\xi_i + W^i - B^N$ for $i \in [N]$. Whenever we wish to emphasise the size of the particle system, we will add the number $N$ to the superscript of $X^i$, $L^i$, and other particle-related quantities. Recall that we defined the random measures $\beta^N$ on $[0, \infty) \times \R$ by
\begin{equation*}
    \d \beta^N(t, x) = \d \delta_{B^N_t} \d t.
\end{equation*}

Our goal is to prove that for any $i \in \bb{N}$, the sequence $(L^{N, i}, \beta^N)_{N \geq i}$ is tight on $C([0, \infty)) \times \cal{M}_1([0, \infty) \times \R)$. Here the space $\cal{M}_1([0, \infty) \times \R)$ of measures $m$ on $[0, \infty) \times \R$ such that $m([0, t] \times \R) = t$ for all $t \geq 0$ is endowed with the initial topology generated by the projections $\cal{M}_1([0, \infty) \times \R) \to \P([0, n] \times \R)$, $m \mapsto \frac{1}{n} m\vert_{[0, n] \times \R}$ for $n \in \bb{N}$. Note that by the properties of the initial topology, in order to show that a subset of $\cal{M}_1([0, \infty) \times \R)$ is compact, it suffices to show that the image of the subset under all the projections is compact. From this it follows that a sequence of random variables with values in $\cal{M}_1([0, \infty) \times \R)$ is tight if each projection of the sequence is tight.

\begin{proposition} \label{prop:tightness_ps}
For any $i \in \bb{N}$, the sequence $(L^{N, i}, \beta^N)_{N \geq i}$ is tight on $C([0, \infty)) \times \cal{M}_1([0, \infty) \times \R)$. Moreover, we have
\begin{equation} \label{eq:regulator_square_int}
    \sup_{N \geq i} \ev\bigl[\lvert L^{N, i}_t\rvert^2\bigr] < \infty
\end{equation}
for any $t \geq 0$.
\end{proposition}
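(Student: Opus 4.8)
The plan is to prove the two tightness statements separately, since marginal tightness on each factor yields tightness of the pair on the product space.

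For $(\beta^N)_N$, I would first reduce to a statement about the laws of $B^N_s$. Because the time-marginal of every $\beta^N$ is Lebesgue measure and the topology on $\cal{M}_1([0,\infty)\times\R)$ is the initial topology for the projections $m\mapsto\frac1n m\vert_{[0,n]\times\R}$, it suffices to show for each $n\in\bb{N}$ that the $\P([0,n]\times\R)$-valued random variables $\frac1n\beta^N\vert_{[0,n]\times\R}$ are tight; since $[0,n]$ is compact, this reduces to tightness of the spatial marginals $\frac1n\int_0^n\delta_{B^N_s}\,\d s$, for which it is enough that $\sup_{N}\int_0^n\pr(\lvert B^N_s\rvert>R)\,\d s\to0$ as $R\to\infty$, i.e.\ that $\{B^N_s\}_N$ is a tight family of real random variables for (a.e.)\ $s$. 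The upper tail is free: $B^N_s=\min_{j}X^{N,j}_s\leq\frac1N\sum_j X^{N,j}_s=\frac1N\sum_j\xi_j+\frac1N\sum_j W^j_s+\gamma s$, whence $\ev[(B^N_s)_+]\leq\ev[\xi_1]+\sqrt{s}+\gamma s$, using only $\ev[\xi_1]<\infty$. For $L^{N,i}$, I would use that it is nondecreasing with $L^{N,i}_0=0$, so tightness on $C([0,\infty))$ follows once (i) $\sup_N\ev[L^{N,i}_T]<\infty$ for every $T$ — which is immediate, since by exchangeability of the particle labels together with $\frac1N\sum_j L^{N,j}_t=\gamma t$ one has $\ev[L^{N,i}_t]=\gamma t$ — and (ii) a uniform modulus estimate, $\lim_{\delta\to0}\limsup_N\pr\bigl(\exists\,0\leq s\leq T:\ L^{N,i}_{s+\delta}-L^{N,i}_s>\eta\bigr)=0$ for all $\eta>0$.

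What remains, and what I expect to be the heart of the matter, is (a) a uniform lower bound on $B^N$ and (b) the modulus estimate (ii) together with the moment bound \eqref{eq:regulator_square_int}; all of these rest on a single non-degeneracy estimate for the particle cloud. The estimate I would aim for is that, after any positive time and uniformly in $N$ over compact time intervals, a positive fraction of the $N$ particles lies in every fixed-width window just above the Atlas particle, so that the Atlas role is shared among order-$N$ particles and the time any fixed particle $i$ spends at the barrier over $[s,s+\delta]$, which equals $\gamma^{-1}N^{-1}(L^{N,i}_{s+\delta}-L^{N,i}_s)$, is of order $\delta$ rather than order $1$. The mechanism is that the exploding drift $\gamma N$ keeps the bottom of the cloud attached to its bulk and forces it to be swept through by many particles, while the Brownian fluctuations regularise the profile on any fixed window: a particle cannot remain minimal for long, since as soon as it receives the drift it overtakes the next particle, of which the estimate guarantees one is nearby. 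Granting this, (b) follows: the Skorokhod map gives $L^{N,i}_t=\sup_{r\leq t}(B^N_r-\xi_i-W^i_r)_+$ and hence $L^{N,i}_{s+\delta}-L^{N,i}_s\leq\sup_{s\leq r\leq s+\delta}(B^N_r-B^N_s)_+ + \sup_{s\leq r\leq s+\delta}(W^i_s-W^i_r)$; the density estimate controls the upward oscillations of $B^N$ on small windows, yielding (ii), while taking $s=0$ and using $L^{N,i}_t\leq\sup_{r\leq t}(B^N_r)_+ + \sup_{r\leq t}(W^i_r)_-$ (valid as $\xi_i\geq0$) together with a uniform second-moment bound on $\sup_{r\leq t}(B^N_r)_+$ gives \eqref{eq:regulator_square_int}. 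That last second-moment bound can in fact be obtained more cheaply, without the full density estimate, by dominating $B^N_t=X^{(1)}_t$ by $\tfrac{1}{\lceil N/2\rceil}\sum_{k\leq\lceil N/2\rceil}X^{(k)}_t$ and applying Fernholz's formula for ranked diffusions, in which the collision local times telescope, leaving the empirical median $\xi_{(\lceil N/2\rceil)}$ and a martingale with quadratic variation $\lceil N/2\rceil\, t$; order-statistics tail bounds then close it using only $\ev[\xi_1]<\infty$ (for the finitely many small $N$, the crude bound $L^{N,i}_t\leq\gamma Nt$ suffices). For (a), the same density estimate shows $B^N$ cannot descend far below the origin: the minimal particle that would carry $B^N$ downward is precisely the one receiving the drift $\gamma N$, so its descent is severely limited and cannot be sustained by the available Brownian fluctuations, giving $\sup_N\pr(\inf_{s\leq T}B^N_s<-R)\to0$ as $R\to\infty$.

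The main obstacle is therefore the density estimate itself: quantifying, uniformly in $N$ and under only $\ev[\xi_1]<\infty$, that order-$N$ particles accumulate just above the Atlas particle. The route I would take is to use the comparison principle (Proposition \ref{prop:comparison}) to sandwich, or otherwise relate, the cloud to a more tractable reference configuration obtained by raising or lowering the initial positions in a controlled way, and to estimate the number of particles below a moving level directly from the reflected formulation \eqref{eq:ps_reflected}, exploiting the constraint $\frac1N\sum_j L^{N,j}_t=\gamma t$ to bound how quickly the barrier can move and hence how quickly particles can be pushed across it. Once this is in hand, the remaining steps are routine reflected-SDE and martingale estimates, and tightness of $(L^{N,i},\beta^N)_N$ on $C([0,\infty))\times\cal{M}_1([0,\infty)\times\R)$ follows from tightness of each coordinate.
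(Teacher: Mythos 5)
You correctly identify the architecture — split into marginal tightness of $\beta^N$ and of $L^{N,i}$, reduce the first to moment bounds on $B^N$, reduce the second to a modulus-of-continuity estimate driven by a non-degeneracy property of the particle cloud — and you correctly identify the non-degeneracy estimate as the heart of the matter. But that is exactly where the proposal stops being a proof: you state the estimate you would need and offer a heuristic for why it ought to hold, but you do not prove it, and the version you state is not the one that works. You ask for a positive fraction of particles \emph{in every fixed-width window just above the Atlas particle}, uniformly in $N$ over compact time intervals. This is both stronger than necessary and false in general: nothing prevents the initial law from having atoms or gaps (Assumption \ref{ass:bounded_variation} is not in force here, only $\ev[\xi_1]<\infty$), so a fixed window above $B^N$ can be empty for a macroscopic time. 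What the paper actually proves, and what suffices, is localised around the \emph{tagged} particle $i$, not the Atlas particle: with $I^N_\tau=\{j\in[N]:\sup_{s\leq\tau}\lvert X^{N,j}_s-X^{N,i}_s\rvert<\eta\}$, one shows $\lim_{\delta\to0}\limsup_N\pr(\lvert I^N_T\rvert/N\leq\delta)=0$. Combined with the observation (your ``budget'' intuition made quantitative) that on the event $\{\sigma>\tau\}$ one has $X^{N,i}_\sigma=B^N_\sigma\leq X^{N,j}_\sigma$ for all $j$, this controls $L^{N,i}_{\tau+\theta}-L^{N,i}_\tau$ via the empirical average of $L^{N,j}$ over $j\in I^N_\tau$, and Aldous' criterion closes the modulus estimate. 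Proving the bound on $\lvert I^N_T\rvert/N$ is itself delicate: the paper dominates $\lvert X^{N,j}-X^{N,i}\rvert$ by reflected Brownian motions via It\^o--Tanaka, passes to a weak limit of the empirical measure on an extended space, and then rules out $\pr_{\ast}(\tilde\tau(\tilde Z)>T\mid\cal{G}^{\ast}_T)=0$ by a Girsanov change-of-measure that forces the reflected difference process to stay below level $\eta$ with positive conditional probability. None of this is replaced by a comparable argument in your proposal; ``use the comparison principle to sandwich the cloud to a tractable reference configuration'' is not yet an argument.

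Two smaller gaps. First, for the lower tail of $B^N$ you again invoke the (unproved) density estimate, whereas the paper uses a separate and much cheaper device: truncate initial conditions at $1$, apply Proposition \ref{prop:comparison} to get $\tilde B^N\leq B^N$, and apply It\^o to $\frac1N\sum_j\lvert\tilde X^j\rvert^2$ so that the constraint $\frac1N\sum_j L^{N,j}_t=\gamma t$ produces the needed bound on $\ev[\int_0^n(\tilde B^N_t)_-\,\d t]$. Second, your route to \eqref{eq:regulator_square_int} via Fernholz's formula and telescoping collision local times through the lowest $\lceil N/2\rceil$ ranks is a genuinely different idea and may be salvageable, but it is a sketch and would need the order-statistics tail estimates worked out under the bare hypothesis $\ev[\xi_1]<\infty$; the paper instead gets \eqref{eq:regulator_square_int} almost for free from $\lvert L^{N,j}_t\rvert^2=2\int_0^t L^{N,j}_s\,\d L^{N,j}_s$, the Skorokhod property $\int_0^t X^{N,j}_s\,\d L^{N,j}_s=\int_0^t B^N_s\,\d L^{N,j}_s$, and summation over $j$ using the constraint. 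In short: the scaffolding is right, but the load-bearing non-degeneracy estimate is neither correctly formulated nor proved, and that is where the real work lies.
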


\begin{proof}
We may establish the tightness of $(L^{N, i})_{N \geq i}$ and $(\beta^N)_{N \geq 1}$ separately. We begin with the former.

\textit{Tightness of} $(L^{N, i})_{N \geq i}$: It suffices to show that for any $T > 0$, the restriction of $L^{N, i}$, $N \geq i$, to the interval $[0, T]$ is tight on $C([0, T])$. So let us fix $T > 0$ and denote by $\cal{T}^N_{[0, T]}$ the set of $\bb{F}^N$-stopping times with values in $[0, T]$. Appealing to Aldous' tightness criterion (see e.g.\@ \cite[Theorem 16.10]{billingsley_convergence_1999}), we must show that for all $\eta > 0$,
\begin{equation} \label{eq:increment_regulator}
    \lim_{\theta \to 0} \limsup_{N \geq i} \sup_{\tau \in \cal{T}^N_{[0, T]}} \pr\Bigl(L^{N, i}_{\tau + \theta} - L^{N, i}_{\tau} > \eta\Bigr) = 0.
\end{equation}
The idea of the proof is as follows: first, we prove that for any $N \geq 1$ and any time $\tau \in \cal{T}^N_{[0, T]}$, there is a positive fraction of particles near $X^{N, i}_{\tau}$. Then, if $L^{N, i}$ were to grow rapidly between $\tau$ and $\tau + \theta$, all particles $j \in [N]$ sufficiently close to $X^{N, i}_{\tau}$ at time $\tau$ would be pushed upwards by an increase in their corresponding regulator process $L^{N, j}$. This in turn would contradict the fact that the empirical average of the regulators growth at the finite rate $\gamma$. Thus, the increments of $L^{N, i}$ cannot be too large as quantified by \eqref{eq:increment_regulator}.

Let us make the outlined strategy precise. Fix $\tau \in \cal{T}^N_{[0, T]}$ and $\theta \in [0, 1]$. Set
\begin{equation*}
    \sigma = \sup\bigl\{t \in [\tau, \tau + \theta] \define X^{N, i}_t = B^N_t\bigr\},
\end{equation*}
where $\sup \varnothing = \tau$. Then $L^{N, i}_{\tau + \theta} = L^{N, i}_{\sigma}$. Let $I$ be a nonempty random $\F^N_{\tau}$-measurable subset of $[N]$. On $\{\sigma > \tau\}$, we have that
\begin{align} \label{eq:growth_of_l}
    L^{N, i}_{\sigma} - L^{N, i}_{\tau} &= X^{N, i}_{\sigma} - X^{N, i}_{\tau} + W^i_{\sigma} - W^i_{\tau} \notag \\
    &\leq \frac{1}{\lvert I \rvert}\sum_{j \in I} \bigl(X^{N, j}_{\sigma} - X^{N, j}_{\tau}\bigr) + \frac{1}{\lvert I \rvert}\sum_{j \in I} \bigl(X^{N, j}_{\tau} - X^{N, i}_{\tau}\bigr) + W^i_{\sigma} - W^i_{\tau} \notag \\
    &\leq \frac{N}{\lvert I \rvert} \frac{1}{N} \sum_{j = 1}^N \bigl(L^{N, j}_{\sigma} - L^{N, j}_{\tau}\bigr) + \frac{1}{\lvert I \rvert}\sum_{j \in I} \bigl\lvert X^{N, j}_{\tau} - X^{N, i}_{\tau}\bigr\rvert \notag \\
    &\ \ \ + \frac{1}{\lvert I \rvert}\sum_{j \in I} (W^j_{\sigma} - W^j_{\tau}) + W^i_{\sigma} - W^i_{\tau} \notag \\
    &\leq \frac{\gamma \theta N}{\lvert I\rvert} + \frac{1}{\lvert I \rvert}\sum_{j \in I} \bigl\lvert X^{N, j}_{\tau} - X^{N, i}_{\tau}\bigr\rvert + \sup_{t \in [0, \theta]} M^I_t,
\end{align}
where $M^I_t = \frac{1}{\lvert I \rvert}\sum_{j \in I} (W^j_{\tau + t} - W^j_{\tau}) + W^i_{\tau + t} - W^i_{\tau}$ for $t \geq 0$. Here we used in the first inequality that on $\{\sigma > \tau\}$, it holds that $X^{N, i}_{\sigma} = B^N_{\sigma} \leq X^{N, j}_{\sigma}$. Note that the continuous martingale $M^I$ has quadratic variation
\begin{equation*}
    \langle M^I \rangle_t = \biggl(\frac{1 + 2\bf{1}_{\{i \in I\}}}{\lvert I\rvert} + 1\biggr) t.
\end{equation*}
Thus, conditional on the random set $I$, the process $M^I$ is a Brownian motion with variance $(1 + 2\bf{1}_{\{i \in I\}})\lvert I\rvert^{-1} + 1 \leq 4$, so that
\begin{equation*}
    \pr\biggl(\sup_{t \in [0, \theta]} M^I_t > \eta\biggr) \leq 2 \pr\biggl(W^1_1 > \frac{\eta}{2 \sqrt{\theta}}\biggr)
\end{equation*}
for $\eta > 0$. From this and \eqref{eq:growth_of_l}, we deduce for any nonempty $\F^N_{\tau}$-measurable subset $I$ of $[N]$ and any $\eta > 0$ that
\begin{align*}
    \pr\Bigl(L^{N, i}_{\sigma} - L^{N, i}_{\tau} > \eta\Bigr) &\leq \pr\biggl(\lvert I\rvert < \frac{\gamma \theta N}{\eta}\biggr) + \pr\biggl(\frac{1}{\lvert I \rvert}\sum_{j \in I} \bigl\lvert X^{N, j}_{\tau} - X^{N, i}_{\tau}\bigr\rvert > \eta\biggr) \\
    &\ \ \ + 2\pr\biggl(W^1_1 > \frac{\eta}{2 \sqrt{\theta}}\biggr).
\end{align*}
Now, let us choose $I = I^N_{\tau}$, where for $t \geq 0$, $I^N_t$ is the set of $j \in [N]$ such that $\tau^N_j = \inf\{s > 0 \define \lvert X^{N, j}_s - X^{N, i}_s\rvert \geq \eta\}$ occurs after $t$. By definition, for all $j \in I^N_{\tau}$, we have that $\lvert X^{N, j}_{\tau} - X^{N, i}_{\tau}\rvert \leq \eta$, so that
\begin{equation*}
    \pr\biggl(\frac{1}{\lvert I^N_{\tau} \rvert}\sum_{j \in I^N_{\tau}} \bigl\lvert X^{N, j}_{\tau} - X^{N, i}_{\tau}\bigr\rvert > \eta\biggr) = 0.
\end{equation*}
Since, moreover, we have $\pr(W^1_1 > \eta/(2\sqrt{\theta}) \to 0$ as $\theta \to 0$ and that $I^N_{\tau} \supset I^N_T$, in view of \eqref{eq:increment_regulator}, it remains to prove that
\begin{equation*}
    \lim_{\theta \to 0} \limsup_{N \geq i} \pr\biggl(\lvert I^N_T\rvert < \frac{\gamma \theta N}{\eta}\biggr) = \lim_{\delta \to 0} \limsup_{N \geq i} \pr\biggl(\frac{\lvert I^N_T\rvert}{N} < \delta\biggr) \to 0.
\end{equation*}

Fix $j \in [N]$ and set $Z^{N, j}_t = \lvert X^{N, j}_t - X^{N, i}_t\rvert$. By the It\^{o}--Tanaka formula (see \cite[Chapter 3, Theorem 7.1]{karatzas_bmsc_1998}), we have that
\begin{align} \label{eq:reflected_difference_comp}
    \d Z^{N, j}_t &= \sgn\bigl(X^{N, j}_t - X^{N, i}_t\bigr) \, \d (W^j_t - W^i_t) + \sgn\bigl(X^{N, j}_t - X^{N, i}_t\bigr) \, \d \bigl(L^{N, j}_t - L^{N, i}_t\bigr) + \d \Lambda^{N, j}_t \notag \\
    &= \sgn\bigl(X^{N, j}_t - X^{N, i}_t\bigr) \, \d (W^j_t - W^i_t) - \bigl(L^{N, j}_t + L^{N, i}_t\bigr) + \d \Lambda^{N, j}_t,
\end{align}
where $\Lambda^{N, j}$ is half of the local time of $X^{N, j} - X^{N, i}$ at zero and $\sgn(x) = 1$ if $x > 0$ and $\sgn(x) = -1$ if $x \leq 0$. We used in the second equality that a.s.\@ outside a nullset of $[0, T]$, $X^{N, j}_t \leq X^{N, i}_t$ whenever $L^{N, j}_t$ increases and $X^{N, i}_t < X^{N, j}_t$ whenever $L^{N, j}_t$ increases. Now, let us define the independent Brownian motions $W^{N, j} = \int_0^{\cdot} \sgn(X^{N, j}_t - X^{N, i}_t) \, \d W^j_t$ and $B^{N, j} = \int_0^{\cdot} \sgn(X^{N, j}_t - X^{N, i}_t) \, \d W^i_t$, so that \eqref{eq:reflected_difference_comp} takes the form of the SDE
\begin{equation}
    \d Z^{N, j}_t = \d \bigl(W^{N, j}_t - B^{N, j}_t\bigr) - \d \bigl(L^{N, j}_t + L^{N, i}_t\bigr) + \d \Lambda^{N, j}_t
\end{equation}
with reflection at the origin. The process $L^{N, j} + L^{N, i}$ is viewed as an exogenous input to this SDE. By standard comparison results for reflected SDEs, the process $Z^{N, j}$ is dominated by the solution $\tilde{Z}^{N, j}$ to the SDE
\begin{equation*}
    \d \tilde{Z}^{N, j}_t = \d \bigl(W^{N, j}_t - B^{N, j}_t\bigr) + \d \tilde{\Lambda}^{N, j}_t
\end{equation*}
with reflection at the origin and initial condition $\tilde{Z}^{N, j}_0 = Z^{N, j}_0 = \lvert\xi_j - \xi_i\rvert$. Consequently, setting $\tilde{\tau}^N_j = \inf\{t > 0 \define \tilde{Z}^{N, j}_t \geq \eta\}$, we have that $\tilde{\tau}^N_j \leq \tau^N_j$. From this we deduce
\begin{equation} \label{eq:non_tilde_to_tilde}
    \pr\biggl(\frac{\lvert I^N_T\rvert}{N} < \delta\biggr) \leq \pr\biggl(\frac{\lvert \tilde{I}^N_T\rvert}{N} < \delta\biggr) \leq \pr\biggl(\frac{\lvert \tilde{I}^N_T\rvert}{N} \leq \delta \biggr),
\end{equation}
where $\tilde{I}^N_T = \{j \in [N] \define \tilde{\tau}^N_j > T\}$. We will suppose for simplicity that the limit superior of the sequence $\pr(\lvert \tilde{I}^N_T\rvert/N \leq \delta)$ is achieved along the whole sequence. 

Define $\mu^N = \frac{1}{N} \sum_{j = 1}^N \delta_{\xi_j, W^{N, j}, B^{N, j}}$. Then, the sequence $(\xi_i, W^i, \mu^N)_{N \geq i}$ is tight on
\begin{equation*}
    \Omega_0 = \R \times C([0, \infty)) \times \P\bigl(\R \times C([0, \infty))^2\bigr),
\end{equation*}
so by selecting a subsequence if necessary, we may assume the sequence converges weakly to some limit distribution $\bb{P}_0$ on $\Omega_0$. Set $\Omega_{\ast} = \R \times C([0, \infty))^2 \times \Omega_0$ and define the probability measure $\pr_{\ast}$ on $\Omega_{\ast}$ by 
\begin{equation} \label{eq:def_cond_prob}
    \pr_{\ast}(A \times B) = \int_B m(A) \, \d \pr_0(x, w, m)
\end{equation}
for Borel measurable $A \subset \R \times C([0, \infty))^2$ and $B \subset \Omega_0$. Let $(\xi, W, B, \xi_0, W^0, \mu)$ denote the canonical random element on $\Omega_{\ast}$ and, for $t \geq 0$, define $\mu_t = \pi^{\#}_t \mu$, where $\pi_t \define \R \times C([0, \infty))^2 \to \R \times C([0, \infty))^2$ is given by $(x, w, b) \mapsto (x, w_{\cdot \land t}, b_{\cdot \land t})$. Then, on $\Omega_{\ast}$, we define the $\sigma$-algebras $\cal{G}_{\ast} = \sigma(\xi_0, W^0, \mu)$ and $\F_{\ast} = \cal{G}_{\ast} \lor (\xi, W, B)$ as well as the filtrations $\bb{G}^{\ast} = (\cal{G}^{\ast}_t)_{t \geq 0}$ and $\bb{F}^{\ast} = (\F^{\ast}_t)_{t \geq 0}$ by $\cal{G}^{\ast}_t = \sigma(\xi_0, W^0_s, \mu_s \define s \in [0, t])$ and $\F^{\ast}_t = \cal{G}^{\ast}_t \lor \sigma(\xi, W_s, B_s \define s \in [0, t])$ for $t \geq 0$. By appealing to the prelimit system, it can be shown that $W$, $B$, and $W^0$ are $\bb{F}^{\ast}$-Brownian motions. Moreover, the definition of $\pr_{\ast}$ in \eqref{eq:def_cond_prob} implies that $\mu_t = \L_{\pr_{\ast}}(\xi, W_{\cdot \land t}, B_{\cdot \land t} \vert \cal{G}_{\ast})$, from which it follows that $\pr_{\ast}$-a.s.\@
\begin{equation} \label{eq:early_sigma_only}
    \mu_t = \L_{\pr_{\ast}}(\xi, W_{\cdot \land t}, B_{\cdot \land t} \vert \cal{G}^{\ast}_t).
\end{equation}

Next, note that we can write $\tilde{Z}^{N, j} = \cal{R}(\xi_j - \xi_i, W^{N, j} - B^{N, j})$ for a continuous function $\cal{R} \define \R \times C([0, \infty)) \to C([0, \infty))$, so that by the continuous mapping theorem
\begin{equation} \label{eq:conv_of_reflected_tilde}
    \frac{1}{N} \sum_{j = 1}^N \delta_{\tilde{Z}^{N, j}} \Rightarrow \L_{\pr_{\ast}}\bigl(\cal{R}(\xi - \xi_0, W - B)\big\vert \cal{G}_{\ast}\bigr)
\end{equation}
as $N \to \infty$. The process $\tilde{Z} = \cal{R}(\xi - \xi_0, W - B)$ satisfies the SDE
\begin{equation}
    \d \tilde{Z}_t = \d (W - B) + \d \tilde{\Lambda}_t
\end{equation}
with reflection at the origin and initial condition $\tilde{Z}_0 = \xi - \xi_0$. Since $W$ and $B$ are independent, we have that the map $\tilde{\tau} \define C([0, \infty)) \to [0, \infty]$, $x \mapsto \inf\{t > 0 \define x_t \geq \eta\}$ is continuous at $\L_{\pr_{\ast}}(\tilde{Z})$-a.e.\@ $x \in C([0, \infty))$. This, together with \eqref{eq:conv_of_reflected_tilde} and the fact that $\pr_{\ast}(\tilde{\tau}(\tilde{Z}) = T) = 0$ implies that
\begin{equation*}
    \frac{\lvert \tilde{I}^N_T\rvert}{N} = \frac{1}{N} \sum_{j = 1}^N \bf{1}_{\{\tilde{\tau}^N_j > T\}} \Rightarrow \pr_{\ast}\bigl(\tilde{\tau}(\tilde{Z}) > T \big\vert \cal{G}_{\ast}\bigr)
\end{equation*}
as we let $N$ to infinity. In view of \eqref{eq:non_tilde_to_tilde} and the Portmanteau theorem, we obtain that
\begin{align} \label{eq:sup_limiting_prob}
    \lim_{\delta \to 0} \limsup_{N \geq i} \pr\biggl(\frac{\lvert I^N_T\rvert}{N} < \delta\biggr) &\leq \lim_{\delta \to 0} \limsup_{N \geq i} \pr\biggl(\frac{\lvert \tilde{I}^N_T\rvert}{N} \leq \delta \biggr) \notag \\
    &\leq \lim_{\delta \to 0} \pr_{\ast}\Bigl(\pr_{\ast}\bigl(\tilde{\tau}(\tilde{Z}) > T \big\vert \cal{G}_{\ast}\bigr) \leq \delta \Bigr) \notag \\
    &= \pr_{\ast}\Bigl(\pr_{\ast}\bigl(\tilde{\tau}(\tilde{Z}) > T \big\vert \cal{G}_{\ast}\bigr) = 0 \Bigr).
\end{align}
Note that $\{\tilde{\tau}(\tilde{Z}) > T\} = \{\tilde{\tau}(\tilde{Z}_{\cdot \land T}) > T\}$ and
\begin{equation*}
    \tilde{Z}_{\cdot \land T} = \cal{R}_{\cdot \land T}(\xi - \xi_0, W - B) = \cal{R}_{\cdot \land T}\bigl(\xi - \xi_0, W_{\cdot \land T}, B_{\cdot \land T}\bigr).
\end{equation*}
Since $\xi_0$ is $\cal{G}^{\ast}_T$-measurable, this in conjunction with \eqref{eq:early_sigma_only} yields that $\pr_{\ast}(\tilde{\tau}(\tilde{Z}) > T \vert \cal{G}_{\ast}) = \pr_{\ast}(\tilde{\tau}(\tilde{Z}) > T \vert \cal{G}^{\ast}_T)$. Thus, by \eqref{eq:sup_limiting_prob} it is enough to show that $\pr_{\ast}$-a.s.\@ we have $\pr_{\ast}(\tilde{\tau}(\tilde{Z}) > T \vert \cal{G}^{\ast}_T) > 0$. We establish this via a change of measure argument.

For $n \geq 1$, let us define $\varrho_n$ to be zero on $\{\lvert \xi - \xi_0\rvert \geq \eta\}$ and
\begin{equation*}
    \varrho_n = \inf\biggl\{t > 0 \define \int_0^t \frac{1}{\lvert \eta - \tilde{Z}_t\rvert^2} \, \d t \geq n\biggr\}
\end{equation*}
on $\{\lvert \xi - \xi_0\rvert < \eta\}$. Set $\varrho = \lim_{n \to \infty} \varrho_n$. With this, we can define the process $\cal{E} = (\cal{E}_t)_{t \in [0, \varrho)}$ to be the stochastic exponential of
\begin{equation*}
    \biggl(\int_0^t -\frac{1}{\eta - \tilde{Z}_s} \, \d W_s\biggr)_{t \in [0, \varrho)}.
\end{equation*}
By Novikov's condition, the process $(\cal{E}_{t \land \varrho_n})_{t \geq 0}$ is a martingale, so
\begin{equation*}
    W^n = W + \int_0^{\cdot \land \varrho_n} \frac{1}{\eta - \tilde{Z}_t} \, \d t
\end{equation*}
is an $\bb{F}^{\ast}$-Brownian motion under the measure $\pr_n$ given by $\d \pr_n = \cal{E}_{t \land \varrho_n}\, \d \pr_{\ast}$ on $\F^{\ast}_t$ for $t \geq 0$. 
% Let us now define the process $\tilde{W} = (\tilde{W}_t)_{t \in [0, \varrho)}$ by
% \begin{equation*}
%     \tilde{W}_t = W_t + \int_0^t \frac{1}{\eta - \tilde{Z}_s} \, \d s
% \end{equation*}
% for $t \in [0, \varrho)$. Then $W^n$ and $\tilde{W}$ coincide for $t \in [0, \varrho)$ and, 
Then, under $\pr_n$, $\tilde{Z}$ satisfies the reflected SDE
\begin{equation*}
    \d \tilde{Z}_t = - \frac{1}{\eta - \tilde{Z}_t} \, \d t + \d (W^n - B) + \d \tilde{\Lambda}_t
\end{equation*}
for $t \in [0, \varrho_n)$. Next, let $\tilde{Z}^n = (\tilde{Z}^n_t)_{t \geq 0}$ solve the SDE
\begin{equation*}
    \d \tilde{Z}^n_t = - \frac{1}{\eta - \tilde{Z}^n_t} \, \d t + \d (W^n - B) + \d \tilde{\Lambda}^n_t
\end{equation*}
with reflection at the origin and initial condition $\tilde{Z}^n_0 = \eta/2$, which has a unique global-in-time solution. Let $\tilde{\varrho}_n$ be the first time $t > 0$ that $\int_0^t \lvert \eta - \tilde{Z}^n_s\rvert^{-2} \, \d s \geq n$, so that the law of $\tilde{\varrho}_n$ under $\pr_n$ converges to $\infty$ as $n \to \infty$. Furthermore, on $\{\lvert \xi - \xi_0\rvert \leq \eta/2\}$, we have that $\tilde{Z}_t \leq \tilde{Z}^n_t$ for $t \in [0, \varrho_n)$, which means that $\tilde{\tau}(\tilde{Z}) \geq \varrho_n \geq \tilde{\varrho}_n$ on this set. From this, we deduce that
\begin{equation} \label{eq:simplified_probab}
    \bigl\{\tilde{\varrho}_n > T,\, \lvert \xi - \xi_0\rvert \leq \eta/2\bigr\} \subset \{\tilde{\tau}(\tilde{Z}) > T\}.
\end{equation}
Having established this, let us return to the problem at hand of proving that $\pr_{\ast}(\tilde{\tau}(\tilde{Z}) > T \vert \cal{G}^{\ast}_T) > 0$ with probability one under $\pr_{\ast}$.

Set $A_{\ast} = \{\pr_{\ast}(\tilde{\tau}(\tilde{Z}) > T \vert \cal{G}^{\ast}_T) = 0\} \in \cal{G}^{\ast}_T$. We must show that $\pr_{\ast}(A_{\ast}) = 0$. Suppose otherwise that $\pr_{\ast}(A_{\ast}) > 0$ and define $p_x = \pr_{\ast}(\lvert \xi - x\rvert \leq \eta/2)$. Since $\xi$ and $\xi_0$ have the same distribution, it holds $\pr_{\ast}$-a.s.\@ that $p_{\xi_0} > 0$. Now, define the $\bb{G}^{\ast}$-martingale $(M_t)_{t \in [0, T]}$ by $M_t = \pr_{\ast}(A_{\ast} \vert \cal{G}^{\ast}_t)$. Using that $W$ is independent of $\bb{G}^{\ast}$, we find by It\^o's formula that
\begin{equation*}
    \bf{1}_{A_{\ast}} \cal{E}_{T \land \tau_n} = \pr_{\ast}(A_{\ast} \vert \cal{G}^{\ast}_0) + \int_0^T M_t \, \d \cal{E}_{t \land \tau_n} + \int_0^T \cal{E}_{t \land \tau_n} \, \d M_t.
\end{equation*}
Multiplying both sides by $\F^{\ast}_0$-measurable $\bf{1}_{\{\lvert \xi - \xi_0\rvert \leq \eta/2\}}$ and taking expectation yields
\begin{align*}
    \pr_n\bigl(A_{\ast} \cap \{\lvert \xi - \xi_0\rvert \leq \eta/2\}\bigr) &= \ev_{\ast}\bigl[\cal{E}_{T \land \tau_n} \bf{1}_{A_{\ast} \cap \{\lvert \xi - \xi_0\rvert \leq \eta/2\}}\bigr] \\
    &= \ev_{\ast}\bigl[\pr_{\ast}(A_{\ast} \vert \cal{G}^{\ast}_0) \bf{1}_{\{\lvert \xi - \xi_0\rvert \leq \eta/2\}}\bigr] \\
    &= \ev_{\ast}\bigl[\pr_{\ast}(A_{\ast} \vert \cal{G}^{\ast}_0) p_{\xi_0}\bigr] \\
    &= \ev_{\ast}[\bf{1}_{A_{\ast}} p_{\xi_0}],
\end{align*}
where we used in the third equality that $\xi$ is independent of $\cal{G}^{\ast}_0$ while $\xi_0$ is $\cal{G}^{\ast}_0$-measurable. Since $\pr_{\ast}(A_{\ast}) > 0$ by assumption, it follows that $\ev_{\ast}[\bf{1}_{A_{\ast}} p_{\xi_0}] > 0$. Hence, the probability of $A_{\ast} \cap \{\lvert \xi - \xi_0\rvert \leq \eta/2\}$ under $\pr_n$ is positive and independent of $n \geq 1$. But we know that $\pr_n(\tilde{\varrho}_n > T) \to 1$, so we can choose $n \geq 1$ large enough such that $\pr_n\bigl(A_{\ast} \cap \{\tilde{\varrho}_n > T,\, \lvert \xi - \xi_0\rvert \leq \eta/2\}\bigr) > 0$. Now, the probability measures $\pr_n$ and $\pr_{\ast}$ are equivalent on $\F^{\ast}_T$, so the probability of $A_{\ast} \cap \{\tilde{\varrho}_n > T,\, \lvert \xi - \xi_0\rvert \leq \eta/2\}$ is also positive under $\pr_{\ast}$. In view of \eqref{eq:simplified_probab}, this finally implies the contradiction
\begin{align*}
    0 = \ev_{\ast}\bigl[\bf{1}_{A_{\ast} }\pr_{\ast}\bigl(\tilde{\tau}(\tilde{Z}) > T \big\vert \cal{G}^{\ast}_T\bigr)\bigr] \geq \pr_{\ast}\Bigl(A_{\ast} \cap \{\tilde{\varrho}_n > T,\, \lvert \xi - \xi_0\rvert \leq \eta/2\}\Bigr) > 0.
\end{align*}

\textit{Tightness of} $(\beta^N)_{N \geq 1}$: Next, let us consider the sequence $(\beta^N)_{N \geq 1}$. By the remark immediately before the statement of the proposition, we have to show that for any $n \in \bb{N}$, the family $\frac{1}{n} \beta^N\vert_{[0, n] \times \R}$, $n \geq 1$, is tight on $\P([0, n] \times \R)$. Owing to \cite[Proposition 2.2(ii)]{sznitman_poc_1991}, this is true if the family of probability measures $\cal{B}([0, n] \times \R) \ni A \mapsto \frac{1}{n}\ev[\beta^N(A)]$, $N \geq 1$, is tight. This in turn follows if we can show that for every $\epsilon > 0$ there exists $K > 0$ such that 
\begin{equation*}
    \ev\Bigl[\beta^N\Bigl([0, n] \times [-K, K]^c\Bigr)\Bigr] = \ev\biggl[\int_0^n \bf{1}_{\{\lvert B^N_t\rvert > K\}} \, \d t\biggr] = \int_0^n \pr(\lvert B^N_t \rvert > K) \, \d t < \epsilon
\end{equation*}
for all $N \geq 1$. Let us note that
\begin{equation*}
    \lvert B^N_t\rvert = (B^N_t)_- + (B^N_t)_+ \leq (B^N_t)_- + (X^{N, 1}_t)_+ \leq (B^N_t)_- + \lvert X^{N, 1}_t\rvert,
\end{equation*}
so that
\begin{equation} \label{eq:barrier_in_two}
    \int_0^n \pr(\lvert B^N_t \rvert > K) \, \d t \leq \int_0^n \pr\bigl((B^N_t)_- > K/2\bigr) \, \d t + \int_0^n \pr\bigl(\lvert X^{N, 1}_t \rvert > K/2\bigr) \, \d t.
\end{equation}
We shall bound the two terms on the right-hand side above separately. For the second one, we have
\begin{align*}
    \pr\bigl(\lvert X^{N, 1}_t\rvert > K/2\bigr) &\leq \pr\bigl(\lvert \xi_1\rvert > K/6\bigr) + \pr\bigl(\lvert W^1_t\rvert > K/6\bigr) + \pr\bigl(\lvert L^{N, 1}_t\rvert > K/6\bigr) \\
    &\leq \pr\bigl(\lvert \xi_1\rvert > K/6\bigr) + \pr\bigl(\lvert W^1_n\rvert > K/6\bigr) + \frac{6\ev[L^{N, 1}_n]}{K} \\
    &\leq \pr\bigl(\lvert \xi_1\rvert > K/6\bigr) + \pr\bigl(\lvert W^1_n\rvert > K/6\bigr) + \frac{6 \gamma n}{K}
\end{align*}
Clearly, by choosing $K > 0$ large enough, we can make the right-hand side above smaller than $\frac{\epsilon}{2n}$, so that
\begin{equation} \label{eq:bound_on_second}
    \int_0^n \pr\bigl(\lvert X^{N, 1}_t \rvert > K/2\bigr) \, \d t < \frac{\epsilon}{2}.
\end{equation}
To bound the first integral on the right-hand side of \eqref{eq:barrier_in_two}, we shall appeal to Proposition \ref{prop:comparison} \ref{it:comparison}. We introduce an auxiliary system whose initial conditions are given by $\tilde{\xi}_j = \xi_j \land 1$, $j \in [N]$. Let $\tilde{X}^1$,~\ldots, $\tilde{X}^N$ denote the corresponding solution of the Atlas model and define $\tilde{B}^N = (\tilde{B}^N_t)_{t \geq 0}$ by $\tilde{B}^N_t = \min_{j \in [N]} \tilde{X}^j_t$, so that $\tilde{B}^N$ solves the reflected SDE \eqref{eq:ps_reflected}. By Proposition \ref{prop:comparison} \ref{it:comparison}, we have that $\tilde{B}^N_t \leq B^N_t$, so that $(B^N_t)_- \leq (\tilde{B}^N_t)_-$. Now, It\^o's formula yields
\begin{align*}
    \frac{1}{N} \sum_{j = 1}^N \lvert \tilde{X}^j_n\rvert^2 &= \frac{1}{N} \sum_{j = 1}^N \biggl(\lvert \tilde{\xi}_j\rvert^2 + \int_0^n \Bigl(2 \gamma N \bf{1}_{\{\tilde{X}^j_t = \tilde{B}^N_t\}} \tilde{X}^j_t + 1\Bigr) \, \d t + \int_0^n 2\tilde{X}^j_t \, \d W^j_t\biggr) \\
    &\leq 1 + n + 2\gamma \int_0^n \tilde{B}^N_t \, \d t + \frac{1}{N} \sum_{j = 1}^N\int_0^n 2\tilde{X}^j_t \, \d W^j_t.
\end{align*}
We take expectations on both sides of the above and use that $(\tilde{B}^N_t)_- = -\tilde{B}^N_t + (\tilde{B}^N_t)_+ \leq -\tilde{B}^N_t + (\tilde{X}^1_t)_+$ and $\frac{1}{N} \sum_{j = 1}^N \lvert \tilde{X}^j_n\rvert^2 \geq 0$ to arrive at
\begin{align} \label{eq:bound_on_negative}
    2\gamma \ev\biggl[\int_0^n (B^N_t)_- \, \d t\biggr] &\leq 2\gamma \ev\biggl[\int_0^n (\tilde{B}^N_t)_- \, \d t\biggr] \notag \\
    &\leq 1 + n + \ev\biggl[\int_0^n (\tilde{X}^1_t)_+ \, \d t\biggl] \notag \\
    &\leq 1 + 2n + \frac{2\sqrt{2}}{3\sqrt{\pi}} n^{3/2} + \gamma \frac{n^2}{2}.
\end{align}
But since
\begin{equation*}
    \int_0^n \pr\bigl((B^N_t)_- > K/2\bigr) \, \d t \leq \frac{2}{K} \ev\biggl[\int_0^n (B^N_t)_- \, \d t\biggr],
\end{equation*}
the above implies that by enlarging $K$ if necessary, we have $\int_0^n \pr\bigl((B^N_t)_- > K/2\bigr) \, \d t < \frac{\epsilon}{2}$. Together with \eqref{eq:bound_on_second}, we get $\int_0^n \pr(\lvert B^N_t \rvert > K) \, \d t < \epsilon$ as required. 

\textit{Moment bound for $(L^{N, i})_{N \geq i}$:} To prove the final statement \eqref{eq:regulator_square_int}, we combine Proposition \ref{prop:comparison} \ref{it:comparison_local} and \ref{it:comparison_number}. Define the indicators $I_j = \bf{1}_{\{\xi_j \leq 1\}}$ for $j \in [N]$. By Proposition \ref{prop:comparison} \ref{it:comparison_local}, it holds that
\begin{equation*}
    \ev\bigl[\lvert L^{N, j}_t\rvert^2 \big\vert I_j = 1\bigr] \geq \ev\bigl[\lvert L^{N, j}_t\rvert^2 \big\vert I_j = 0\bigr],
\end{equation*}
which in view of the law of total probability implies that
\begin{equation} \label{eq:latp_est}
    \ev\bigl[\lvert L^{N, j}_t\rvert^2\bigr] \leq \ev\bigl[\lvert L^{N, j}_t\rvert^2 \big\vert I_j = 1\bigr].
\end{equation}
Next, consider the system consisting of the particles in $\cal{I}_N = \{j \in [N] \define I_j = 1\}$. On $\{\cal{I}_N \neq \varnothing\}$, we can solve SDE \eqref{eq:ps_reflected} with initial conditions $\xi_j$, $j \in \cal{I}_N$. Denote the solution by $\tilde{B}^N$ and let $(\tilde{X}^{N, j} - \tilde{B}^N, \tilde{L}^{N, j})$, $j \in \cal{I}_N$, be the solution to the Skorokhod problem for $\xi_j + W^j - \tilde{B}^N$. Note that these processes should be distinguished from the ones we constructed when proving tightness of $(\beta^N)_{N \geq 1}$.

Owing to Proposition \ref{prop:comparison} \ref{it:comparison_number}, on $\{\cal{I}_N \neq \varnothing\}$, we have $\tilde{L}^{N, j}_t \geq L^{N, j}_t$ for $j \in \cal{I}_N$. From this and \eqref{eq:latp_est}, we deduce
\begin{equation} \label{eq:square_bound_local}
      \ev\bigl[\lvert L^{N, i}_t\rvert^2\bigr] \leq \ev\bigl[\lvert L^{N, i}_t\rvert^2 \big\vert I_i = 1\bigr] \leq \ev\bigl[\lvert\tilde{L}^{N, i}_t\rvert^2 \big\rvert I_i = 1\bigr] = \frac{\ev\bigl[I_i \lvert\tilde{L}^{N, i}_t\rvert^2\bigr]}{\ev[I_i]}.
\end{equation}
Thus, we are done if we can show that $\ev[I_i \lvert\tilde{L}^{N, i}_t\rvert^2]$ is bounded uniformly over $N \geq i$. For $j \in \cal{I}_N$ and $t \geq 0$, let us write
\begin{align*}
    \lvert \tilde{L}^{N, j}_t\rvert^2 &= 2 \int_0^t \tilde{L}^{N, j}_s \, \d \tilde{L}^{N, j}_s \\
    &= 2 \int_0^t \tilde{X}^{N, j}_s \, \d \tilde{L}^{N, j}_s - 2 \int_0^t (\xi_j + W^j_s) \, \d \tilde{L}^{N, j}_s \\
    &\leq 2 \int_0^t \tilde{B}^N_s \, \d \tilde{L}^{N, j}_s - 2 W^j_t \tilde{L}^{N, j}_t + 2\int_0^t \tilde{L}^{N, j}_s \, \d W^j_s \\
    &\leq 2 \int_0^t \tilde{B}^N_s \, \d \tilde{L}^{N, j}_s + 2 \lvert W^j_t\rvert^2 + \frac{1}{2} \lvert \tilde{L}^{N, j}_t\rvert^2 + 2\int_0^t \tilde{L}^{N, j}_s \, \d W^i_s.
\end{align*}
We rearrange this inequality, multiply both sides by $I_j$, sum over $j \in \cal{I}_N$, and take expectation to get
\begin{equation*}
    \ev\bigl[I_i \lvert \tilde{L}^{N, i}_t\rvert^2\bigr] = \frac{1}{N} \sum_{j = 1}^N \ev\bigl[I_j \lvert \tilde{L}^{N, j}_t\rvert^2\bigr]
    \leq 4 \gamma \ev\biggl[\bf{1}_{\{\cal{I}_N \neq \varnothing\}}\int_0^t \tilde{B}^N_s \, \d s\biggr] + 4 t,
\end{equation*}
where we used that 
\begin{equation*}
    \sum_{j = 1}^N I_j \tilde{L}^{N, j}_s = \sum_{j \in \cal{I}_N} \tilde{L}^{N, j}_s = \gamma s
\end{equation*}
for $s \geq 0$ on $\{\cal{I}_N \neq \varnothing\}$. Now, analogously to the previous step, it can be shown that the expression $\ev[\bf{1}_{\{\cal{I}_N \neq \varnothing\}} \int_0^t \tilde{B}^N_s \, \d s]$ is bounded uniformly over $N \geq i$ and so the same is true for $\ev[I_i \lvert \tilde{L}^{N, i}_t\rvert^2]$. Together with \eqref{eq:square_bound_local} this concludes the proof.
\end{proof}

\section{The Mean-Field Limit of the Atlas Model} \label{sec:mfl}

The first goal of this section is to establish the convergence of the finite Atlas model to the generalised mean-field limit introduced in Definition \ref{def:generalised_solution}. Subsequently, anticipating the existence result for the strong mean-field limit, we shall prove that the reflected McKean--Vlasov SDE describing the generalised mean-field limit exhibits pathwise uniqueness. 

\subsection{Convergence to the Generalised Mean-Field Limit}

By the tightness result, Proposition \ref{prop:tightness_ps}, from Section \ref{sec:tightness}, the particle system is tight. We will now prove Theorem \ref{thm:convergence}, which states that every limit point of the particle system is a generalised solution to McKean--Vlasov SDE \eqref{eq:mfl}.

\begin{proof}[Proof of Theorem \ref{thm:convergence}]
For $N \geq 1$, let $L^{N, i}$, $i \in [N]$, and $\beta^N$ be defined as in Section \ref{sec:tightness}. Define $\mu^N = \sum_{i = 1}^N \delta_{\xi_i, W^i, L^{N, i}}$. Since the family $(\xi_i, W^i, L^{N, i})_{i \in [N]}$ is exchangeable and the sequence $(L^{N, i}, \beta^N)_{N \geq i}$ is tight on $C([0, \infty)) \times \cal{M}_1([0, \infty) \times \R)$ by Proposition \ref{prop:tightness_ps}, the same is true for $(\mu^N, \beta^N)_{N \geq 1}$ on $\Omega_0 = \cal{M}_1([0, \infty) \times \R) \times \P(\cal{S})$, where $\cal{S} = \R \times C([0, \infty)) \times C_{\text{inc}}([0, \infty))$ and $C_{\text{inc}}([0, \infty))$ denotes the set of nondecreasing continuous functions on $[0, \infty)$. Hence, by selecting a subsequence if necessary, we may assume that $(\beta^N, \mu^N)_{N \geq 1}$ converges weakly to a distribution $\pr_0$ on $\Omega_0$. We will now set up a probability measure $\pr_{\ast}$ on $\Omega_{\ast} = \cal{S} \times \Omega_0$ in a manner similar to the one used in the proof of Proposition \ref{prop:tightness_ps}. For Borel measurable $A \subset \cal{S}$ and $B \subset \Omega_0$, we define
\begin{equation*}
    \pr_{\ast}(A \times B) = \int_B m(A) \, \d \pr_0(x, w, \ell).
\end{equation*}
We let $(\xi, W, L, \beta, \mu)$ denote the canonical random element on $\Omega_{\ast}$. Next, for $t \geq 0$, let $\pi_t \define \cal{S} \to \cal{S}$ be given by $(x, w, \ell) \mapsto (x, w_{\cdot \land t}, \ell_{\cdot \land t})$. On $\Omega_{\ast}$, we define the $\sigma$-algebras $\cal{G}_{\ast} = \sigma(\beta, \mu)$ and $\F_{\ast} = \cal{G}_{\ast} \lor \sigma(\xi, W, L)$ as well as the filtrations $\bb{G}^{\ast} = (\cal{G}^{\ast}_t)_{t \geq 0}$ and $\bb{F}^{\ast} = (\F^{\ast}_t)_{t \geq 0}$ by $\cal{G}^{\ast}_t = \sigma(\beta([0, s] \times A), \mu_s \define A \in \cal{B}(\R),\, s \in [0, t])$ and $\F^{\ast}_t = \cal{G}^{\ast}_t \lor \sigma(\xi, W_s, L_s \define s \in [0, t])$ for $t \geq 0$. Similarly to \eqref{eq:early_sigma_only}, we have
\begin{equation*}
    \mu_t = \L_{\pr_{\ast}}(\xi, W_{\cdot \land t}, L_{\cdot \land t} \vert \cal{G}^{\ast}_t)
\end{equation*}
and it holds for any $i \in \bb{N}$ that $(\xi_i, W^i, L^{N, i}, \beta^N, \mu^N)_{N \geq i}$ converges weakly to $(\xi, W, L, \beta, \mu)$ on $\Omega_{\ast}$. Lastly, set $X = \xi + W + L$. We will show that $(L, \beta)$ is a solution to McKean--Vlasov SDE \eqref{eq:mfl}. For that, we have to verify Properties \ref{it:reflection} and \ref{it:minimality} from Definition \ref{def:generalised_solution}. 

Let us begin with the Property \ref{it:reflection}, fixing $\varphi \in C_b([0, \infty) \times \R)$ and $T > 0$. Note that since the first marginal of $\beta$ is the Lebesgue measure, $\pr_{\ast}$-a.s.\@ the function $(t, x) \mapsto \bf{1}_{[0, T]}(t) \varphi(t, x)$ is $\beta$-a.e.\@ continuous. Thus, it follows from the continuous mapping theorem that
\begin{equation*}
    \gamma \int_{[0, T] \times \R} \varphi(t, x) \, \d \beta^N(t,  x) \Rightarrow \gamma \int_{[0, T] \times \R} \varphi(t, x) \, \d \beta(t,  x)
\end{equation*}
as $N \to \infty$. Now, we can rewrite the integral $\gamma \int_{[0, T] \times \R} \varphi(t, x) \, \d \beta^N(t, x)$ as
\begin{equation} \label{eq:rewrite_integral}
    \frac{1}{N} \sum_{j = 1}^N \int_0^T \varphi(t, X^{N, j}_t) \, \d L^{N, j}_t = \langle \mu^N, \Phi\rangle,
\end{equation}
where $\Phi \define \R \times C([0, \infty)) \times C_{\text{inc}}([0, \infty)) \to \R$ is the continuous map given by
\begin{equation*}
    (x, w, \ell) \mapsto \int_0^T \varphi(t, x + w_t + \ell_t) \, \d \ell_t.
\end{equation*}
By \eqref{eq:regulator_square_int}, we have that the expectation of the second moment of $L^{N, j}_T$ is bounded uniformly in $N \geq j$ for $j \in \bb{N}$. Since $\Phi$ grows at most linearly in its dependence on elements in $C_{\text{inc}}([0, \infty))$, taking the weak limit as $N \to \infty$ in \eqref{eq:rewrite_integral}, we find that the random variables $\gamma \int_{[0, T] \times \R} \varphi(t, x) \, \d \beta(t, x)$ and
\begin{equation*}
    \langle \mu, \Phi\rangle = \langle \mu_T, \Phi\rangle = \ev_{\ast}\biggl[\int_0^T \varphi(t, X_t) \, \d L_t \bigg\vert \cal{G}^{\ast}_T\biggr]
\end{equation*}
on $\Omega_{\ast}$ coincide $\pr_{\ast}$-almost surly. This gives Property \ref{it:reflection}.

Let us proceed to the second property. Fix $T > 0$ and $\varphi \in C_b([0, \infty) \times \R)$ such that $x \mapsto \varphi(t, x)$ is nondecreasing for $t \geq 0$. Next, let $\tilde{L}$ be an integrable nondecreasing continuous $\bb{F}^{\xi, W}$-adapted stochastic process with $\ev[\tilde{L}_t] = \gamma t$ for $t \geq 0$. By appealing to a routine density argument, % TODO: provide more information here, maybe add result to appendix
we may assume that $\tilde{L}_t = \int_0^t \Psi_s(\xi, W_{\cdot \land s}) \, \d s$ for $t \geq 0$, where $\Psi \define \R \times C([0, \infty)) \to C([0, \infty); [c, c^{-1}])$ is a continuous function with $\ev[\Psi_s(\xi, W_{\cdot \land s})] = \gamma$ for some $c \in (0, 1]$. Our goal is to construct corresponding processes $\tilde{L}^{N, j}$, $j \in [N]$, $N \geq 1$ for the particle system such that $\frac{1}{N} \sum_{j = 1}^N \delta_{\tilde{L}^{N, j}} \Rightarrow \L_{\pr_{\ast}}(\tilde{L})$ as $N \to \infty$ and $\frac{1}{N} \sum_{j = 1}^N \tilde{L}^{N, j}_t = \gamma t$ for $t \geq 0$. To that end, set $\tilde{\cal{S}} = \R \times C([0, \infty))$ and define $\bar{\Psi} \define \tilde{\cal{S}} \times \P(\tilde{\cal{S}}) \to C([0, \infty); [c, c^{-1}])$ by
\begin{equation*}
    \bar{\Psi}_t(x, w, m) = \gamma \biggl(\int_{\tilde{\cal{S}}} \Psi_t(\tilde{x}, \tilde{w}) \, \d m(\tilde{x}, \tilde{w})\biggr)^{-1} \Psi_t(x, w).
\end{equation*}
Then, we let $\tilde{L}^{N, j}_t = \int_0^t \bar{\Psi}_t(\xi, W^j_{\cdot \land s}, \tilde{\mu}^N_s) \, \d s$ for $t \geq 0$, where $\tilde{\mu}^N_s = \frac{1}{N} \sum_{j = 1}^N \delta_{\xi_j, W^j_{\cdot \land s}}$. It follows from the law of large numbers and the continuity of $\Psi_t$ in its arguments that $\int_{\tilde{\cal{S}}} \Psi_t(x, w) \, \d \tilde{\mu}^N_t(x, w) \to \ev[\Psi_t(\xi, W_{\cdot \land t})] = \gamma$ almost surly. From this we deduce that
\begin{align*}
    \frac{1}{N} \sum_{j = 1}^N \int_0^T \varphi(t, X^{N, j}_t) \, \d \tilde{L}^{N, j}_t &= \int_{\cal{S}} \biggl(\int_0^T \varphi(t, x + w_t + \ell_t) \bar{\Psi}_t\bigl(x, w_{\cdot \land t}, \tilde{\mu}^N_t\bigr) \, \d t\biggr) \, \d \mu^N(x, w, \ell) \\
    &\Rightarrow \int_{\cal{S}} \biggl(\int_0^T \varphi(t, x + w_t + \ell_t) \Psi_t(x, w_{\cdot \land t}) \, \d t\biggr) \, \d \mu(x, w, \ell) \\
    &= \ev_{\ast}\biggl[\int_0^T \varphi(t, X_t) \, \d \tilde{L}_t \bigg\vert \cal{G}^{\ast}_T\biggr]
\end{align*}
as $N \to \infty$. Next, using that $\frac{1}{N}\sum_{j = 1}^N \tilde{L}^{N, j}_t = \gamma t$ for $t \geq 0$ by construction, we find that
\begin{align} \label{eq:ineq_ps}
    \gamma \int_{[0, T] \times \R} \varphi(t, x) \, \d \beta^N(t,  x) &= \gamma \int_0^T \varphi(t, B^N_t) \, \d t \notag \\
    &= \frac{1}{N} \sum_{j = 1}^N \int_0^T \varphi(t, B^N_t) \, \d \tilde{L}^{N, j}_t \notag \\
    &\leq \frac{1}{N} \sum_{j = 1}^N \int_0^T \varphi(t, X^{N, j}_t) \, \d \tilde{L}^{N, j}_t,
\end{align}
where we used in the last step that $x \mapsto \varphi(t, x)$ is nondecreasing and $B^N_t \leq X^{N, j}_t$. Taking the weak limit on both sides of the above yields
\begin{equation*}
    \ev_{\ast}\biggl[\int_0^T \varphi(t, X_t) \, \d \tilde{L}_t\bigg\vert \cal{G}^{\ast}_T \biggr] \geq \gamma \int_{[0, T] \times \R} \varphi(t, x) \, \d \beta(t, x),
\end{equation*}
giving Property \ref{it:minimality}.
\end{proof}

Theorem \ref{thm:convergence} does not address whether the particle system converges, only that its limit points solve McKean--Vlasov SDE \eqref{eq:mfl} in the generalised sense. In the subsequent section, we will show that this equation has a unique solution, from which the weak convergence of the particle system follows.

\subsection{Uniqueness of Generalised Solutions of McKean--Vlasov SDE \texorpdfstring{\eqref{eq:mfl}}{(EQ)}}

We will now prove that the generalised formulation of McKean--Vlasov SDE \eqref{eq:mfl} exhibits pathwise uniqueness. In doing so, we will assume that the conclusions of Theorem \ref{thm:mfl_exist} are true. The proof of Theorem \ref{thm:mfl_exist} will be delivered in Section \ref{sec:strong_existence} below. It states that for any initial condition $\xi$ whose law has a density, which is c\`adl\`ag function of finite total variation that does not vanish at zero, McKean--Vlasov SDE \eqref{eq:mfl} has a solution in the strong sense. We denote the class of all distributions on $[0, \infty)$ with that property by $\cal{I}_{\alpha}$. With this in mind, let us deliver the proof of Theorem \ref{thm:unique_generalised}.

\begin{proof}[Proof of Theorem \ref{thm:unique_generalised}]
Let $(L, \beta)$ be a generalised solution of McKean--Vlasov SDE \eqref{eq:mfl} on a given filtered probability space $(\Omega, \F, \bb{F}, \pr)$, carrying the $\F_0$-measurable initial condition $\xi$, an $\bb{F}$-Brownian motion $W$, and a subfiltration $\bb{G}$ of $\bb{F}$ independent of $(\xi, W)$. Set $X = \xi + W + L$. Fix $\epsilon > 0$ and let $\xi_{\epsilon}$ be $\F^{\xi, W}_{\epsilon}$-measurable such that $\xi_{\epsilon} \in \cal{I}_{\alpha}$ and $\ev[\lvert \xi_{\epsilon} - \xi\rvert] < \epsilon$. The reason for allowing $\xi_{\epsilon}$ to be $\F^{\xi, W}_{\epsilon}$-measurable as opposed to $\F^{\xi, W}_0$-measurable is that $\F^{\xi, W}_0$ may not include a sufficient amount of randomness to support members of $\cal{I}_{\alpha}$ at all. Given that the law of $\xi_{\epsilon}$ is in $\cal{I}_{\alpha}$, McKean--Vlasov SDE  \eqref{eq:mfl} admits a solution $b^{\epsilon} = (b^{\epsilon}_t)_{t \geq \epsilon}$ in the strong sense when started from $\xi_{\epsilon}$ at time $\epsilon$. Let us define the processes $X^{\epsilon} = (X^{\epsilon}_t)_{t \geq \epsilon}$ and $L^{\epsilon} = (L^{\epsilon}_t)_{t \geq 0}$ as follows: for $t \in [0, \epsilon)$, we set $L^{\epsilon}_t = \gamma t$, while for $t \in [\epsilon, \infty)$, we let
\begin{align*}
    L^{\epsilon}_t &= \gamma\epsilon + \sup_{\epsilon \leq s \leq t} \bigl(\xi_{\epsilon} + (W_s - W_{\epsilon})\bigr)_-, \\
    X^{\epsilon}_t &= \xi_{\epsilon} + (W_t - W_{\epsilon}) + (L^{\epsilon}_t - \gamma \epsilon).
\end{align*}
Note that $L^{\epsilon}$ is an integrable nondecreasing continuous $\bb{F}^{\xi, W}$-adapted stochastic process with $\ev[L^{\epsilon}_t] = \gamma t$ for $t \geq 0$. Hence, it serves as a test process in the sense of Property \ref{it:minimality} in the definition of generalised solutions for McKean--Vlasov SDE \eqref{eq:mfl}. 

Let us now fix an arbitrary convex function $\varphi \in C^1(\R)$ with bounded derivative and $\varphi(0) = 0$. By the fundamental theorem of calculus, we have for any $t \geq \epsilon$ that
\begin{align} \label{eq:test_equation}
    \varphi(X_t - X^{\epsilon}_t) &= \varphi(X_{\epsilon} - \xi_{\epsilon}) + \int_{\epsilon}^t \partial_x \varphi(X_s - X^{\epsilon}_s) \, \d (L_s - \tilde{L}^{\epsilon}_s) \notag \\
    &\leq \varphi(X_{\epsilon} - \xi_{\epsilon}) + \int_{\epsilon}^t \partial_x \varphi(X_s - b^{\epsilon}_s) \, \d (L_s - \tilde{L}^{\epsilon}_s) \notag \\
    &= \varphi(X_{\epsilon} - \xi_{\epsilon}) + \int_0^t \partial_x \varphi\bigl(X_s - b^{\epsilon}_{s \lor \epsilon}\bigr) \, \d (L_s - \tilde{L}^{\epsilon}_s) + \lVert \partial_x \varphi\rVert_{\infty} (L_{\epsilon} + \gamma \epsilon).
\end{align}
Here we used in the second line that $\varphi$ is convex, so its derivative $\partial_x \varphi$ is a nondecreasing function, and that $\int_{\epsilon}^s (X^{\epsilon}_u - b^{\epsilon}_u) \, \d L^{\epsilon}_u = 0$ and $X^{\epsilon}_s \geq b^{\epsilon}_s$ for $s \geq \epsilon$. Note now that the map $[0, \infty) \times \R \to \R$, $(s, x) \mapsto \partial_x \varphi(x - b^{\epsilon}_{s \lor \epsilon})$ is precisely a test function in the sense of Properties \ref{it:reflection} and \ref{it:minimality}. Thus, taking expectation on both sides of \eqref{eq:test_equation} and applying Properties \ref{it:reflection} and \ref{it:minimality} yields
\begin{equation} \label{eq:bound_from_properties}
    \ev[\varphi(X_t - X^{\epsilon}_t)] \leq \ev[\varphi(X_{\epsilon} - \xi_{\epsilon})] + 2\gamma \epsilon \lVert \partial_x \varphi\rVert_{\infty},
\end{equation}
where we used that $\ev[L_{\epsilon}] = \gamma \epsilon$. Next, it holds that
\begin{equation*}
    \ev[\lvert X_{\epsilon} - \xi_{\epsilon}\rvert] \leq \ev[\lvert \xi - \xi_{\epsilon}\rvert] + \frac{\sqrt{2\epsilon}}{\sqrt{\pi}} + \gamma \epsilon \leq (1 + \gamma) \epsilon + \frac{\sqrt{2\epsilon}}{\sqrt{\pi}},
\end{equation*}
so since $\varphi$ is of linear growth, the Vitali convergence theorem implies that $\ev[\varphi(X_{\epsilon} - \xi_{\epsilon})] \to 0$ as $\epsilon \to 0$. Inserting this into \eqref{eq:bound_from_properties} shows that $\lim_{\epsilon \to 0} \ev[\varphi(X_t - X^{\epsilon}_t)] = 0$ for $t > 0$. Since $\varphi \in C^1(\R)$ was an arbitrary convex function with bounded derivative and $\varphi(0) = 0$, it follows that $X^{\epsilon}_t \to X_t$ in probability for $t > 0$. Since $X$ has continuous trajectories, this uniquely determines it and, therefore, also the process $L$. Moreover, since $X^{\epsilon}_t$ is $\F^{\xi, W}_t$-measurable, the same is true for $X_t$. In particular, the processes $X$ and $L$ are $\bb{F}^{\xi, W}$-adapted. Since $\bb{F}^{\xi, W}$ is independent of $\bb{G}$, we deduce from Property \ref{it:reflection} that for any $T > 0$ and $\varphi \in C_b([0, \infty) \times \R)$, we have
\begin{equation*}
    \int_{[0, T] \times \R} \varphi(t, x) \, \d \beta(t, x) = \ev\biggl[\int_0^T \varphi(t, X_t) \, \d L_t \bigg\vert \cal{G}_T \biggr] = \ev\biggl[\int_0^T \varphi(t, X_t) \, \d L_t\biggr].
\end{equation*}
This not only uniquely determines $\beta$ but also implies that it is deterministic, completing the proof.
\end{proof}

\section{Analysis of McKean--Vlasov SDEs \texorpdfstring{\eqref{eq:probab_repr_hitting}}{(EQ)} and \texorpdfstring{\eqref{eq:probab_repr_reflected}}{(EQ)}} \label{sec:hitting_reflected}

The objective of this section is to study McKean--Vlasov SDEs \eqref{eq:probab_repr_hitting} and \eqref{eq:probab_repr_reflected}. We show that both equations are equivalent and that they admit at least one solution. Throughout this section, we shall assume that $(\Omega, \F, \pr)$ is a probability space carrying a Brownian motion $W$. Moreover, we shall use the convention that $[0, T[ = [0, T]$ if $T \in [0, \infty)$ and $[0, T[~= [0, \infty)$ if $T = \infty$. Lastly, for a path $f \in C([0, T[)$, we set $(f)^{\ast}_t = \sup_{0 \leq s \leq t} f_s$ and $\lvert f\rvert^{\ast}_t = \sup_{0 \leq s \leq t} \lvert f_s\rvert$ for $t \in [0, T[$.

\subsection{Equivalence between McKean--Vlasov SDEs \texorpdfstring{\eqref{eq:probab_repr_hitting}}{(EQ)} and \texorpdfstring{\eqref{eq:probab_repr_reflected}}{(EQ)}}

To establish the equivalence between the two McKean--Vlasov SDEs, we need the following elementary lemma.

\begin{lemma} \label{lem:relu_to_cdf}
Let $Z$ be an integrable random variable. Then, for any $x \in \R$, we have
\begin{equation} \label{eq:relu_to_cdf}
    \ev[(x + Z)_-] = \int_x^{\infty} \pr(y + Z \leq 0) \, \d y.
\end{equation}
\end{lemma}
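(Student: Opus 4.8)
The plan is to reduce the identity to a single application of Tonelli's theorem via the layer-cake representation of the negative part. First I would record the elementary pointwise identity
\[
    a_- = \int_0^{\infty} \mathbf{1}_{\{a \le -t\}} \, \d t, \qquad a \in \R,
\]
which holds because, viewed as a function of $t \in [0, \infty)$, the integrand equals $\mathbf{1}_{[0, a_-)}(t)$ except at the single point $t = a_-$, so its Lebesgue integral is $a_-$. Applying this with $a = x + Z(\omega)$ for each $\omega$ gives the pointwise identity $(x + Z)_- = \int_0^{\infty} \mathbf{1}_{\{x + Z \le -t\}} \, \d t$ on $\Omega$.

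Next I would take expectations on both sides and invoke Tonelli's theorem to interchange $\ev$ with the $\d t$-integral, which is legitimate because the integrand $(\omega, t) \mapsto \mathbf{1}_{\{x + Z(\omega) \le -t\}}$ is nonnegative and jointly measurable. This yields
\[
    \ev[(x + Z)_-] = \int_0^{\infty} \pr(x + Z \le -t) \, \d t = \int_0^{\infty} \pr\bigl((x + t) + Z \le 0\bigr) \, \d t.
\]
The change of variables $y = x + t$ (so $\d y = \d t$, and $t \in [0, \infty)$ corresponds to $y \in [x, \infty)$) then transforms the right-hand side into exactly $\int_x^{\infty} \pr(y + Z \le 0) \, \d y$, which is \eqref{eq:relu_to_cdf}.

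Finally I would note that integrability of $Z$ ensures $\ev[(x + Z)_-] \le \ev[\lvert x\rvert + \lvert Z\rvert] < \infty$, so both sides are finite (the computation above is in any case valid in $[0, \infty]$ without this remark). I do not anticipate any genuine obstacle: the only points requiring a word of care are the treatment of the endpoint $t = a_-$ in the layer-cake identity and the joint measurability needed for Tonelli, both of which are routine.
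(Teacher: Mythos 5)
Your proof is correct and, notably, simpler than the one the paper gives. The paper first assumes $Z$ is bounded from below, applies integration by parts to $\int_x^\infty (y-x) \, \d \L(-Z)(y)$, uses the boundedness to kill the boundary term at infinity, and then removes the boundedness hypothesis by truncating $Z$ at $-n$ and invoking monotone convergence on both sides. Your layer-cake plus Tonelli argument sidesteps both the boundary-term bookkeeping and the truncation step entirely: everything is nonnegative from the start, so Tonelli applies unconditionally, and the change of variables $y = x + t$ gives the stated form in one line. The one minor slip in your write-up is the claim that $\mathbf{1}_{\{a \le -t\}}$ agrees with $\mathbf{1}_{[0, a_-)}(t)$ except at $t = a_-$; when $a < 0$ the integrand is actually $\mathbf{1}_{[0, a_-]}(t)$, and when $a > 0$ the set is empty while $[0, a_-) = \emptyset$ as well, so the only discrepancy is at the single point $t = a_-$ in the cases $a \le 0$. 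This does not affect the Lebesgue integral, so the conclusion stands; you may simply want to phrase the caveat more carefully. As you observe, integrability of $Z$ is not even needed for the identity itself — it holds with values in $[0, \infty]$ — whereas the paper's integration-by-parts route genuinely uses it (via the finiteness of the limit of the boundary term). Your approach is therefore slightly more robust as well as shorter.
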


\begin{proof}
Let us first suppose that $Z$ is bounded from below. Then, integration by parts gives
\begin{align*}
    \ev[(x + Z)_-] &= \ev\bigl[(x - (-Z))_-\bigr] \\
    &= \int_x^{\infty} (x - y) \, \d \L(-Z)(y) \\
    &= -\Bigl((x - y) \pr(-Z > y)\Bigr)\Big\vert_x^{\infty} + \int_x^{\infty} \pr(-Z > y) \, \d y \\
    &= \int_x^{\infty} \pr(y + Z \leq 0) \, \d y,
\end{align*}
where we use the boundedness of $Z$ from below to conclude that the boundary terms in the third line vanish, and in the last equality the fact that $\pr(y + Z < 0) = \pr(y + Z \leq 0)$ for all but countably many $y \in [x, \infty)$. Next, let $Z$ be an arbitrary integrable random variable and set $Z_n = Z \lor (-n)$ for $n \geq 0$. The above shows that
\begin{equation*}
    \ev[(x + Z_n)_-] = \int_x^{\infty} \pr(y + Z_n \leq 0) \, \d y.
\end{equation*}
Clearly, we have that $\pr(y + Z_n \leq 0) \to \pr(y + Z \leq 0)$ as $n \to \infty$. Thus, applying the monotone convergence theorem on both sides of the above equality yields \eqref{eq:relu_to_cdf}.
\end{proof}

Next, let us verify that under Assumption \ref{ass:integrability}, the integrals on the right-hand side of Equations \eqref{eq:hitting_sol_prop} and \eqref{eq:probab_repr_reflected_sol_prop} are finite. For convenience, we repeat Assumption \ref{ass:integrability}, which states that the initial condition $m$ of McKean--Vlasov SDE \eqref{eq:probab_repr_reflected} is a locally finite signed measure on $[0, \infty)$ such that $m(\{0\}) < \frac{1}{\alpha}$, $m([0, x]) \leq \frac{1}{\alpha}$ for $x \geq 0$, and for all $c > 0$ it holds that
\begin{equation*}
    \int_{[0, \infty)} e^{-c x^2} \, \d \lvert m\rvert(x) < \infty.
\end{equation*}
Furthermore, recall that the signed measure $v$ (identified with its density) appearing in \eqref{eq:hitting_sol_prop}, which serves as an initial condition for McKean--Vlasov SDE \eqref{eq:probab_repr_hitting}, is given in terms of $m$ by $v(x) = m([0, x])$ for $x \geq 0$.

\begin{lemma} \label{lem:integrals_finite}
Let Assumption \ref{ass:integrability} be satisfied. Then, for any $f \in C([0, \infty))$ and $t \geq 0$, the integrals
\begin{equation} \label{eq:hitting_and_reflecting_int}
    \int_0^{\infty} \pr\biggl(\inf_{0 \leq s \leq t} (x + W_t - f_s) \leq 0\biggr) v(x) \, \d x \quad \text{and} \quad \int_{[0, \infty)} \ev\biggl[\sup_{0 \leq s \leq t} (x + W_t - f_s)_-\biggr] \, \d m(x)
\end{equation}
are finite and coincide.
\end{lemma}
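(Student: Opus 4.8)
The plan is to deduce the identity from Lemma~\ref{lem:relu_to_cdf} and a sub-Gaussian tail estimate, with Fubini's theorem for signed measures doing the final bookkeeping. First I would set $Z_t = \inf_{0 \le s \le t}(W_s - f_s)$; since $-\lvert W\rvert^{\ast}_t - \lvert f\rvert^{\ast}_t \le Z_t \le -f_0$, this random variable is integrable, so Lemma~\ref{lem:relu_to_cdf} applies and yields, for every $x \ge 0$, the identity $\ev[(x + Z_t)_-] = \int_x^{\infty} \pr(y + Z_t \le 0)\,\d y$. Using $\bigl(x + \inf_s g_s\bigr)_- = \sup_s (x + g_s)_-$ on the left and $\{y + Z_t \le 0\} = \bigl\{\inf_{0 \le s \le t}(y + W_s - f_s) \le 0\bigr\}$ on the right, this becomes
\begin{equation*}
    \ev\biggl[\sup_{0 \le s \le t}(x + W_s - f_s)_-\biggr] = \int_x^{\infty} p(y)\,\d y, \qquad p(y) := \pr\biggl(\inf_{0 \le s \le t}(y + W_s - f_s) \le 0\biggr),
\end{equation*}
where $p$ is nonincreasing, bounded by $1$, and is exactly the integrand of the first integral in \eqref{eq:hitting_and_reflecting_int}.

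Next I would record a quantitative bound on the left-hand side above. Assuming $t > 0$ (the case $t = 0$ is immediate) and writing $C = \lvert f\rvert^{\ast}_t$, we have $(x + Z_t)_- \le \bigl(\lvert W\rvert^{\ast}_t + C - x\bigr)_+$, which vanishes unless $\lvert W\rvert^{\ast}_t > x - C$; integrating the reflection-principle estimate $\pr(\lvert W\rvert^{\ast}_t > z) \le 4 e^{-z^2/(2t)}$ over $z \in (x - C, \infty)$ then produces constants $K = K(t, f) > 0$ and $c = c(t) > 0$ with
\begin{equation*}
    \ev\biggl[\sup_{0 \le s \le t}(x + W_s - f_s)_-\biggr] \le K e^{-c x^2} \qquad \text{for every } x \ge 0.
\end{equation*}
Integrating this against $\d\lvert m\rvert$ and invoking the growth condition \eqref{eq:ic_growth} of Assumption~\ref{ass:integrability} --- this is precisely where that hypothesis enters, and why it must hold for \emph{every} $c > 0$ --- gives
\begin{equation*}
    \int_{[0, \infty)} \ev\biggl[\sup_{0 \le s \le t}(x + W_s - f_s)_-\biggr]\,\d\lvert m\rvert(x) \le K \int_{[0, \infty)} e^{-c x^2}\,\d\lvert m\rvert(x) < \infty,
\end{equation*}
so the second integral in \eqref{eq:hitting_and_reflecting_int} converges absolutely.

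Finally I would run Fubini. Applying Tonelli to the nonnegative kernel $(x, y) \mapsto \bf{1}_{\{x \le y\}}\,p(y)$ and the identity above,
\begin{equation*}
    \int_0^{\infty} p(y)\,\lvert m\rvert([0, y])\,\d y = \int_{[0, \infty)} \biggl(\int_x^{\infty} p(y)\,\d y\biggr)\d\lvert m\rvert(x) = \int_{[0, \infty)} \ev\biggl[\sup_{0 \le s \le t}(x + W_s - f_s)_-\biggr]\,\d\lvert m\rvert(x) < \infty.
\end{equation*}
Since $\lvert v(y)\rvert = \lvert m([0, y])\rvert \le \lvert m\rvert([0, y])$, the first integral in \eqref{eq:hitting_and_reflecting_int} is absolutely convergent as well, and Fubini's theorem for the signed measure $m$ then gives
\begin{equation*}
    \int_{[0, \infty)} \ev\biggl[\sup_{0 \le s \le t}(x + W_s - f_s)_-\biggr]\,\d m(x) = \int_{[0, \infty)} \biggl(\int_x^{\infty} p(y)\,\d y\biggr)\d m(x) = \int_0^{\infty} p(y)\,m([0, y])\,\d y = \int_0^{\infty} p(y) v(y)\,\d y,
\end{equation*}
which is exactly the claimed equality of the two integrals in \eqref{eq:hitting_and_reflecting_int}. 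The one real obstacle is the tail estimate of the second paragraph: the bound on $\ev[\sup_{0 \le s \le t}(x + W_s - f_s)_-]$ must decay in $x$ fast enough to be absorbed by \eqref{eq:ic_growth}, but no finer control is needed, so once the reflection principle is in hand the remaining manipulations are routine.
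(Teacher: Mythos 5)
Your proof is correct and follows essentially the same route as the paper's: reduce to the pointwise identity via Lemma~\ref{lem:relu_to_cdf}, establish a sub-Gaussian tail bound to pair against \eqref{eq:ic_growth}, and finish with Fubini. The only cosmetic difference is in the tail estimate — the paper computes $\ev[\mathbf{1}_{\{Z \geq x - (f)^{\ast}_t\}} Z]$ directly using $-\inf_{0 \leq s \leq t} W_s \sim \sqrt{t}\,\lvert W_1\rvert$, whereas you bound $(x + Z_t)_-$ by $(\lvert W\rvert^{\ast}_t + C - x)_+$ and integrate a reflection-principle tail — but both give $K e^{-c x^2}$, so the argument is the same in substance.
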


\begin{proof}
Let us start by proving that the right-hand integral in \eqref{eq:hitting_and_reflecting_int} is finite. Note that the integral is well-defined with values in $[-\infty, \infty]$, since the integrand is a bounded function and the integrator is a signed measure, so its positive part or its negative part has finite mass. Now, for $x \geq (f)^{\ast}_t$, we have with $Z = - \inf_{0 \leq s \leq t} W_s \sim \sqrt{t}\lvert W_1\rvert$ that
\begin{align} \label{eq:local_time_est}
    \ev\biggl[\sup_{0 \leq s \leq t} (x + W_s - f_s)_-\biggr] &\leq \ev\bigl[\bf{1}_{\{Z \geq x - (f)^{\ast}_t\}} Z\bigr] \notag \\
    &= \frac{\sqrt{2 t}}{\sqrt{\pi}} \int_{(x - (f)^{\ast}_t)/\sqrt{t}} y e^{-\frac{y^2}{2}} \, \d y \notag \\
    &= \frac{\sqrt{2 t}}{\sqrt{\pi}} \exp\biggl(-\frac{(x - (f)^{\ast}_t)^2}{2t}\biggr) \notag \\
    &\leq \frac{\sqrt{2 t}}{\sqrt{\pi}} \exp\biggl(\frac{3 ( (f)^{\ast}_t)^2}{2t}\biggr) e^{-\frac{x^2}{4t}}.
\end{align}
This together with the growth condition \eqref{eq:ic_growth} and the local finiteness of $m$ implies that the integral on the right-hand side of \eqref{eq:fixed_point_map} is finite.

Next, since 
\begin{equation*}
    \sup_{0 \leq s \leq t} \bigl(x + W_s - f_s\bigr)_- = \biggl(x + \inf_{0 \leq s \leq t} (W_s - f_s) \biggr)_-
\end{equation*}
for $x \geq 0$, Lemma \eqref{lem:relu_to_cdf} implies that
\begin{equation*}
    \ev\biggl[\sup_{0 \leq s \leq t} (x + W_s - f_s)_-\biggr] = \int_x^{\infty} \pr\biggl(\inf_{0 \leq s \leq t} (y + W_t - f_s) \leq 0\biggr) \, \d y
\end{equation*}
for $x \geq 0$. For notational convenience, let us set $p_y = \pr(\inf_{0 \leq s \leq t} (y + W_t - f_s) \leq 0)$. Then, integrating both sides of the above equation over $x \geq 0$ with respect to $m$ and applying Fubini's theorem to the positive and negative part of $m$ separately, implies that
\begin{align*}
    \int_{[0, \infty)} \ev\biggl[\sup_{0 \leq s \leq t} (x + W_t - f_s)_-\biggr] \, \d m(x) &= \int_{[0, \infty)} \biggl(\int_x^{\infty} p_y \, \d y \biggr) \, \d m(x) \\
    &= \int_{[0, \infty)} \biggl(\int_0^{\infty} \bf{1}_{y \geq x} p_y \, \d y \biggr) \, \d m(x) \\
    &= \int_0^{\infty} p_y \biggl(\int_{[0, \infty)} \bf{1}_{x \leq y} \, \d m(x)\biggr) \, \d y \\
    &= \int_0^{\infty} \pr\biggl(\inf_{0 \leq s \leq t} (x + W_t - f_s) \leq 0\biggr) v(x) \, \d x.
\end{align*}
In particular, the right-hand side is finite. This concludes the proof.
\end{proof}

We can now deliver the proof of the equivalence result, Proposition \ref{prop:equivalence}.

\begin{proof}[Proof of Proposition \ref{prop:equivalence}]
Let $\ell \define [0, \infty) \to \R$ be continuous and define $Y^x$ and $X^x$ as in Equations \eqref{eq:probab_repr_hitting} and \eqref{eq:probab_repr_reflected}, respectively. Additionally, let $L^x$ be the regulator associated with the process $X^x$ reflected at the origin and let $\tau_x$ be the hitting time of $Y^x$ of the origin. By Lemma \ref{lem:integrals_finite}, we have
\begin{equation*}
    \int_{[0, \infty)} \ev[L^x_t]  \, \d m(x) = \int_0^{\infty} \pr(\tau_x \leq t) v(x) \, \d x.
\end{equation*}
From this it directly follows that $\ell$ is a solution to McKean--Vlasov SDE \eqref{eq:probab_repr_hitting}, meaning that 
\begin{equation*}
    \ell_t = \int_0^{\infty} \pr(\tau_x \leq t) v(x) \, \d x
\end{equation*}
for $t \geq 0$, if and only if $\ell$ solves McKean--Vlasov SDE \eqref{eq:probab_repr_reflected}, i.e.\@
\begin{equation*}
    \ell_t = \int_{[0, \infty)} \ev[L^x_t]  \, \d m(x)
\end{equation*}
for $t \geq 0$.
\end{proof}

\subsection{Existence of McKean--Vlasov SDEs \texorpdfstring{\eqref{eq:probab_repr_hitting}}{(EQ)} and \texorpdfstring{\eqref{eq:probab_repr_reflected}}{(EQ)}}

The following lemma shows that Assumption \ref{ass:integrability} is propagated under the dynamics of McKean--Vlasov SDE \eqref{eq:probab_repr_reflected}.

\begin{lemma} \label{lem:ass_propagation}
Let Assumption \ref{ass:integrability} be satisfied. Let $f \in C([0, \infty))$, denote the solution to the Skorokhod problem for $x + W - f$ by $(X^x, L^x)$, and define the signed measure $\tilde{m}$ on $[0, \infty)$ by
\begin{equation*}
    \tilde{m}(A) = \int_{[0, \infty)} \pr(X^x_t \in A) \, \d m(x)
\end{equation*}
for Borel measurable $A \subset [0, \infty)$ and some $t \geq 0$. Then $\tilde{m}$ satisfies Assumption \ref{ass:integrability}.
\end{lemma}

\begin{proof}
Let us prove the properties stated in Assumption \ref{ass:integrability} in sequence. Local finiteness of $\tilde{m}$ is clear, so we start with $\tilde{m}(\{0\}) < \frac{1}{\alpha}$. Note that the map $[0, \infty) \ni x \mapsto \pr(X^x_t = 0)$ is nonincreasing, since if $\varrho_x = \inf\{s \in [0, t] \define X^x_s = 0\}$ occurs before or at time $t$, then $X^x_t = X^0_t$, and $X^x_t > 0$ otherwise, so that
\begin{equation*}
    \pr(X^x_t = 0) = \pr\bigl(X^x_t = 0,\, \varrho_x \leq t\bigr) = \pr\bigl(X^0_t = 0,\, \varrho_x \leq t\bigr).
\end{equation*}
Set $P(x) = \pr(X^0_t = 0) - \pr(X^x_t = 0) = \pr(X^0_t = 0,\, \varrho_x > t)$ and $\rho(x) = \frac{1}{\alpha} - m([0, x])$ for $x \geq 0$, so integration by parts yields
\begin{align*}
    \tilde{m}(\{0\}) &= \int_{[0, \infty)} \pr(X^x_t = 0) \, \d m(x) \\
    &= \int_{[0, \infty)} \pr(X^0_t = 0) \, \d m(x) - \int_{[0, \infty)} P(x) \, \d m(x) \\
    &= \pr(X^0_t = 0) m([0, \infty)) + \bigl(P(x) \rho(x) \bigr)\big\vert_0^{\infty} - \int_{[0, \infty)} \rho(x) \, \d P(x) \\
    &\leq \frac{1}{\alpha} \pr(X^0_t = 0) \\
    &< \frac{1}{\alpha},
\end{align*}
where we used in the fourth line that $\lim_{x \to \infty} \rho(x) = \frac{1}{\alpha} - m([0, \infty))$ and that $x \mapsto P(x)$ is nondecreasing while $\rho(x) \geq 0$. The last inequality follows since $\pr(X^0_t = 0) < 1$.

Let us now deduce the second property, namely $\tilde{m}([0, x]) \leq \frac{1}{\alpha}$ for $x \geq 0$. Note that $y \mapsto X^y_t$ is a.s.\@ continuous and nondecreasing, so for its generalised inverse $Z_x = \inf\{x > 0 \define X^y_t > x\}$, we have $X^y_t \leq x$ if and only if $y \leq Z_x$. Consequently, it holds for any $x \geq 0$ that
\begin{align*}
    \tilde{m}([0, x]) = \int_{[0, \infty)} \pr\bigl(X^y_t \in [0, x]\bigr) \, \d m(y) = \ev\biggl[\int_{[0, \infty)} \bf{1}_{\{y \leq Z_x\}} \, \d m(y)\biggr] = \ev\bigl[m([0, Z_x])\bigr] \leq \frac{1}{\alpha},
\end{align*}
where we applied Fubini's theorem separately to the positive and negative part of $m$ in the second equality. This gives the third property, so it remains to verify that \eqref{eq:ic_growth} holds for $\tilde{m}$. We have for $c > 0$ that
\begin{equation*}
    \int_{[0, \infty)} e^{-cx^2} \, \d \lvert \tilde{m}\rvert(x) \leq \int_{[0, \infty)} \ev\bigl[\exp(- c\lvert X^x_t\rvert^2)\bigr] \, \d \lvert m\rvert(x).
\end{equation*}
The finiteness of the integral on the right-hand side is then a consequence of the fact that
\begin{equation*}
    \lvert X^x_t\rvert^2 \geq \frac{1}{2}\lvert x\rvert^2 - \bigl\lvert W_t - f_t + L^x_t\bigr\rvert^2 \geq \frac{1}{2}\lvert x\rvert^2 - 2 \lvert W_t - f_t\rvert^2 - 2 \lvert L_t\rvert^2
\end{equation*}
and that \eqref{eq:ic_growth} is true for $m$ by assumption.
\end{proof}

We can now proceed to the proof of Theorem \ref{thm:repr_reflected_exist}. 

\begin{proof}[Proof of Theorem \ref{thm:repr_reflected_exist}]
The construction of a solution $\ell \in C([0, \infty))$ proceeds in several step. First, we employ the Schauder fixed-point theorem to establish the existence of local-in-time solution $\ell \in C([0, T])$ for some $T > 0$. Then, we use Zorn's lemma to obtain a (possibly local-in-time) solution to McKean--Vlasov SDE \eqref{eq:probab_repr_reflected} defined on a maximal time domain $[0, T_{\ast})$ for $T_{\ast} \in (0, \infty]$. Finally, arguing by contradiction, we prove that $T_{\ast} = \infty$.

\textit{Local existence:} As mentioned above, the local-in-time solution is constructed via the Schauder fixed-point theorem. To obtain a suitable subset of $C([0, T])$ (with $T > 0$ to be determined below), in which to search for the fixed point, we use \cite[Theorem 1.12]{baker_loc_times_2025}. It guarantees the existence of a solution to McKean--Vlasov SDE \eqref{eq:probab_repr_reflected} in the case where the initial condition is a finite measure. 

Let us begin with some preparation. 
We equip the space $C([0, \infty))$ with the metric
\begin{equation*}
    (f^1, f^2) \mapsto \sum_{n = 1}^{\infty} \frac{1}{2^n} \bigl(\lvert f^1 - f^2\rvert^{\ast}_n \land 1\bigr).
\end{equation*}
The space $C([0, T])$ for $T > 0$ is endowed with the supremum norm and the induced metric. We introduce the map $\Lambda \define C([0, \infty)) \to C([0, \infty))$ given by
\begin{equation} \label{eq:fixed_point_map}
    \Lambda_t(f) = \int_{[0, \infty)} \ev\biggl[\sup_{0 \leq s \leq t} (x + W_s - \alpha f_s)_-\biggr] \, \d m(x)
\end{equation}
for $t \geq 0$. Note that by Lemma \ref{lem:integrals_finite}, the integral on the right-hand side is finite for any $f \in C([0, \infty))$ and $t \geq 0$. In what follows, we will also consider the restriction of $\Lambda$ to $C([0, T])$ for $T > 0$ and, slightly abusing notation, denote this mapping by $\Lambda$ as well.

Now, let $\tilde{m}$ be any signed measure satisfying Assumption \ref{ass:integrability} such that $m([0, x]) \leq \tilde{m}([0, x])$ for any $x \geq 0$, and suppose that $\tilde{\ell}$ is a (potentially local-in-time) solution to McKean--Vlasov SDE \eqref{eq:probab_repr_reflected} defined on $[0, \tilde{T}[$ for $\tilde{T} \in (0, \infty]$. We claim that if $f \in C([0, \tilde{T}[)$ such that $f_t \leq \tilde{\ell}_t$ for $t \in [0, \tilde{T}[$, then $\Lambda_t(f) \leq \tilde{\ell}_t$ for $t \in [0, \tilde{T}[$. Indeed, applying Lemma \ref{lem:integrals_finite} twice, we see that
\begin{align} \label{eq:sol_comparison}
    \Lambda_t(f) &= \int_{[0, \infty)} \ev\biggl[\sup_{0 \leq s \leq t} (x + W_s - \alpha f_s)_-\biggr] \, \d m(x) \notag \\
    &= \int_0^{\infty} \pr\biggl(\inf_{0 \leq s \leq t} (x + W_s - \alpha f_s) \leq 0\biggr) m([0, x]) \, \d x \notag \\
    &\leq \int_0^{\infty} \pr\biggl(\inf_{0 \leq s \leq t} (x + W_s - \alpha \tilde{\ell}_s) \leq 0\biggr) \tilde{m}([0, x]) \, \d x \notag \\
    &= \int_{[0, \infty)} \ev\biggl[\sup_{0 \leq s \leq t} (x + W_s - \alpha \tilde{\ell}_s)_-\biggr] \, \d \tilde{m}(x) \notag \\
    &= \tilde{\ell}_t
\end{align}
for any $t \geq 0$, as desired. Throughout the proof, we shall frequently make use of this comparison property.

Next, we wish to define a suitable subset of the continuous functions in which to look for a fixed point. This requires further preparation. As hinted at earlier, by \cite[Theorem 1.12]{baker_loc_times_2025}, for any finite measure $\tilde{m}$ on $[0, \infty)$, McKean--Vlasov SDE \eqref{eq:probab_repr_reflected} has a unique nondecreasing continuous maximal solution $\tilde{\ell} = (\tilde{\ell}_t)_{t \in [0, \tilde{T}[}$ with initial condition $\tilde{m}$ for $\tilde{T} \in (0, \infty]$. Note that Theorem 1.12 in \cite{baker_loc_times_2025} is phrased in terms of initial conditions with unit mass. To switch back and forth between McKean--Vlasov SDE \eqref{eq:probab_repr_reflected} and the equation from \cite[Theorem 1.12]{baker_loc_times_2025}, we simply replace $\tilde{m}$ and $\tilde{\ell}$ by $\tilde{m}/\tilde{m}([0, \infty))$ and $\tilde{\ell}/\tilde{m}([0, \infty))$, respectively. Then, the coefficient appearing in front of $\ell$ in the equation from \cite{baker_loc_times_2025} becomes $\alpha \tilde{m}([0, \infty)) \in [0, \infty)$. If $\alpha \tilde{m}([0, \infty)) < 1$ or $\alpha \tilde{m}([0, \infty)) = 1$ and $\alpha \tilde{m}(\{0\}) < 1$, we are in the subcritical or critical regime, respectively, in which case \cite[Theorem 1.12]{baker_loc_times_2025} guarantees the existence of a global-in-time solution, i.e.\@ $\tilde{T} = \infty$. In the supercritical regime $\alpha \tilde{m}([0, \infty)) > 1$ and $\alpha \tilde{m}(\{0\}) < 1$, the maximal solution $\tilde{\ell}$ has a finite interval of existence, so $\tilde{T} \in (0, \infty)$.

We shall apply the above result for two distinct choices of the finite initial measure $\tilde{m}$. By the Hahn decomposition theorem, we can find mutually singular measures $m_+$ and $m_-$ on $[0, \infty)$ such that $m_+$ or $m_-$ have finite mass and $m = m_+ - m_-$. From Assumption \ref{ass:integrability}, it follows that $m_+$ must have finite mass. Indeed, if $m_-$ has finite mass, then since $m([0, x]) \leq \frac{1}{\alpha}$ for $x \geq 0$, we find that $m_+([0, \infty)) \leq \frac{1}{\alpha} + m_-([0, \infty)) < \infty$, so $m_+$ has finite mass too. Next, let us define another finite measure $\bar{m}$ on $[0, \infty)$ through its cumulative distribution function $\bar{F} \define [0, \infty) \to [0, \frac{1}{\alpha}]$, given by
\begin{equation*}
    \bar{F}(x) = \biggl(m_+([0, x]) + \frac{1}{\alpha}\bf{1}_{x \geq 1}\biggr) \land \frac{1}{\alpha}
\end{equation*}
for $x \geq 0$. It holds that $\bar{m}(\{0\}) = \bar{F}(0) < \frac{1}{\alpha}$, and $0 \leq \bar{m}([0, x]) = \bar{F}(x) \leq \frac{1}{\alpha}$ and $m([0, x]) \leq \bar{F}(x) = \tilde{m}([0, x])$ for $x \geq 0$. By the above, McKean--Vlasov SDE \eqref{eq:probab_repr_reflected} has nondecreasing solutions $\ell^+ = (\ell^+_t)_{t \in [0, T_+[}$ and $\bar{\ell} = (\bar{\ell}_t)_{t \in [0, \infty)}$ for the initial conditions $m_+$ and $\bar{m}$, where $T_+ \in (0, \infty]$. Here we let $T_+ = \infty$ and $\ell^+_t = 0$ for $t \in [0, \infty)$ if $m_+ = 0$. Note that if $m_-$ vanishes, then $m_+([0, \infty)) \leq \frac{1}{\alpha}$, so $T_+ = \infty$.   

Let us now define $\cal{K}$ to be the set of $f \in C([0, T_+[)$ such that $f_t \leq \bar{\ell}_t$ for $t \in [0, T_+[$ and $f_t - f_s \leq \ell^+_t - \ell^+_s$ for $s$, $t \in [0, T_+[$ with $s \leq t$. Note that we could have also used $\ell^+$ as an upper bound for the set $\cal{K}$. However, when we later on attempt to obtain a global solution, it is instrumental to have the globally defined upper bound $\bar{\ell}$. Clearly, $\cal{K}$ is a nonempty convex and closed subset of $C([0, T_+[)$. Next, we will prove that $\Lambda$ maps $\cal{K}$ into itself and that the image of $\cal{K}$ under $\Lambda$ is a precompact subset of $C([0, T_+[)$. Hence, if we further show that $\Lambda$ is continuous on $\cal{K}$, it follows from the Schauder fixed-point theorem that $\Lambda$ has a fixed point in $\cal{K}$. Let us begin by proving that $\Lambda(\cal{K}) \subset \cal{K}$. Fix $f \in \cal{K}$, so that $f_t \leq \bar{\ell}_t$ for $t \in [0, T_+[$ and $f_t - f_s \leq \ell^+_t - \ell^+_s$ for $s$, $t \in [0, T_+[$ with $s \leq t$. We have to show that $\Lambda_t(f) \leq \bar{\ell}_t$ for $t \in [0, T_+[$ and $\Lambda_t(f) - \Lambda_s(f) \leq \ell^+_t - \ell^+_s$ for $s$, $t \in [0, T_+[$ with $s \leq t$. The former is an immediate consequence of \eqref{eq:sol_comparison}. Next, by standard comparison results for the Skorokhod problem, it holds for any $x \geq 0$ that
\begin{equation*}
    X^{f, x}_t \geq X^{\ell^+, x}_t
\end{equation*}
for $t \in [0, T_+[$, where $(X^{g, x}, L^{g, x})$ is the solution to the Skorokhod problem for $x + W - \alpha g$ with $g \in C([0, T_+[)$. Define the measures $m^{\pm}_t$ and $\tilde{m}^{\pm}_t$, $t \in [0, T_+[$, by
\begin{equation*}
    m^{\pm}_t(A) = \int_{[0, \infty)} \pr\bigl(X^{f, x}_t \in A\bigr) \, \d m_{\pm}(x) \quad \text{and} \quad \tilde{m}^{\pm}_t(A) = \int_{[0, \infty)} \pr\bigl(X^{\ell^+, x}_t \in A\bigr) \, \d m_{\pm}(x)
\end{equation*}
for Borel measurable $A \subset [0, \infty)$ and set $m_t = m^+_t - m^-_t$. Then, for $x \geq 0$, we find $m^{\pm}_t([0, x]) \leq \tilde{m}^{\pm}_t([0, x])$ and $m_t([0, x]) \leq \tilde{m}^+_t([0, x])$. Moreover, as in the proof of Lemma \ref{lem:ass_propagation}, one can show that $m^{\pm}_t$, $\tilde{m}^{\pm}$, and $m_t$ satisfy the integrability condition \eqref{eq:ic_growth}. Note now that for $s$, $t \in [0, T_+[$ with $s \leq t$, we can write
\begin{align*}
    \Lambda_t(f) - \Lambda_s(f) &= \int_{[0, \infty)} \ev\biggl[\sup_{s \leq u \leq t} \Bigl(X^{f, x}_s + (W_u - W_s) - \alpha (f_u - f_s)\Bigr)_-\biggr] \, \d m(x) \\
    &= \int_{[0, \infty)} \ev\biggl[\sup_{s \leq u \leq t} \Bigl(x + (W_u - W_s) - \alpha (f_u - f_s)\Bigr)_-\biggr] \, \d m_s(x).
\end{align*}
Similarly, we have that 
\begin{equation*}
    \ell^+_t - \ell^+_s = \int_{[0, \infty)} \ev\biggl[\sup_{s \leq u \leq t} \Bigl(x + (W_u - W_s) - \alpha (\ell^+_u - \ell^+_s)\Bigr)_-\biggr] \, \d \tilde{m}^+_s(x).
\end{equation*}
Thus, proceeding as in \eqref{eq:sol_comparison}, using that $m_t([0, x]) \leq \tilde{m}^+_t([0, x])$ for $x \geq 0$, one can indeed show that $\Lambda_t(f) - \Lambda_s(f) \leq \ell^+_t - \ell^+_s$.

The precompactness of $\Lambda(\cal{K})$ can be argued similarly. Another estimate along the lines of \eqref{eq:sol_comparison}, using that $(m^+_t + m^-_t)([0, x]) \leq (\tilde{m}^+_t + \tilde{m}^-_t)([0, x])$ for $x \geq 0$, reveals that
\begin{equation*}
    \bigl\lvert \Lambda_t(f) - \Lambda_s(f)\bigr\rvert \leq \int_{[0, \infty)} \ev\biggl[\sup_{s \leq u \leq t} \Bigl(x + (W_u - W_s) - \alpha (\ell^+_u - \ell^+_s)\Bigr)_-\biggr] \, \d \bigl(\tilde{m}^+_s +\tilde{m}^-_s\bigr)(x)
\end{equation*}
for $s$, $t \in [0, T_+[$ with $s \leq t$. The expression on the right-hand side does not depend on $f \in C([0, T_+[)$ and converges to zero as $\lvert t - s\rvert \to 0$. Thus, the elements of $\Lambda(\cal{K})$ are equicontinuous and have the pointwise upper bound $\bar{\ell}$. This implies that $\Lambda(\cal{K})$ is a precompact subset of $C([0, T_+[)$.

To apply the Schauder fixed-point theorem, it remains to show that $\Lambda$ is continuous on $\cal{K}$. Fix a sequence $(f^k)_{k \geq 1}$ in $\cal{K}$ which converges to $f \in \cal{K}$ and let $t \in [0, T_+[$. Then, for any $k \geq 1$ and $x \geq 0$, it holds that
\begin{equation*}
     \biggl\lvert \ev\biggl[\sup_{0 \leq s \leq t} (x + W_s - \alpha f_s)_-\biggr] - \ev\biggl[\sup_{0 \leq s \leq t} (x + W_s - \alpha f^k_s)_-\biggr]\biggr\rvert \leq \alpha \lvert f - f^k\rvert^{\ast}_t.
\end{equation*}
For $x \geq \alpha \bar{\ell}_t$, the estimate \eqref{eq:local_time_est} yields the bound
\begin{equation*}
   \ev\biggl[\sup_{0 \leq s \leq t} (x + W_s - \alpha f_s)_-\biggr] + \ev\biggl[\sup_{0 \leq s \leq t} (x + W_s - \alpha f^k_s)_-\biggr] \leq \frac{2 \sqrt{2 t}}{\sqrt{\pi}} \exp\biggl(\frac{3 (\alpha \bar{\ell}_t)^2}{2t}\biggr) e^{-\frac{x^2}{4t}}.
\end{equation*}
Thus, we find
\begin{align*}
    \bigl\lvert \Lambda(f) - \Lambda(f^k)\bigr\rvert^{\ast}_t \leq \alpha \lvert m\rvert([0, z]) \lvert f - f^k\rvert^{\ast}_t + 2 \int_{(z, \infty)} \frac{\sqrt{2 t}}{\sqrt{\pi}} \exp\biggl(\frac{3 (\alpha \bar{\ell}_t)^2}{2t}\biggr) e^{-\frac{x^2}{4t}} \, \d \lvert m\rvert(x)
\end{align*}
for $z \geq \alpha \bar{\ell}_t$. Now, given any $\epsilon > 0$, we first choose $z$ sufficiently large such that the second term on the right-hand side above is smaller than $\frac{\epsilon}{2}$. Then, we let $K \geq 1$ be large enough such that for all $k \geq K$, it holds that 
\begin{equation*}
    \lvert f - f^k\rvert^{\ast}_t \leq \frac{\epsilon}{2 \alpha \lvert m\rvert([0, z])}.
\end{equation*}
Together, this implies that $\lvert \Lambda(f) - \Lambda(f^k)\rvert^{\ast}_t \leq \epsilon$ for all $k \geq K$. Since $\epsilon > 0$ war arbitrary, we obtain that $\lvert \Lambda(f) - \Lambda(f^k)\rvert^{\ast}_t \to 0$ as $k \to \infty$. This holds for any $t \in [0, T_+[$, so it finally follows that $\Lambda(f^k) \to \Lambda(f)$ as $k \to \infty$ in $C([0, T_+[)$. This concludes the first step.

\textit{Existence of maximal solutions:} Let $\cal{S}$ be the subset of $\cup_{T \in (0, \infty]} (\{T\} \times C([0, T)))$ consisting of couples $(T, \ell)$ such that $\ell = (\ell_t)_{t \in [0, T)}$ is a (possibly local-in-time) solution to McKean--Vlasov SDE \eqref{eq:probab_repr_reflected} with $\ell_t \leq \bar{\ell}_t$ for $t \in [0, T)$. Note that here we are working with half-open as opposed to closed intervals. By the previous step, the set $\cal{S}$ is nonempty. We endow $\cal{S}$ with the partial order $\leq$ given by $(T, \ell) \leq (T', \ell')$ if $T \leq T'$ and $\ell_t = \ell'_t$ for $t \in [0, T)$. We claim that every totally ordered subset of $\cal{S}$ has an upper bound in $\cal{S}$. Let $\cal{C}$ be a totally ordered subset of $\cal{S}$ and set $T_{\cal{C}} = \sup_{(T, \ell) \in \cal{C}} T$. Select a sequence $(T_n, \ell^n)_{n \geq 1}$ in $\cal{C}$ such that $\lim_{n \to \infty} T_n = T_{\cal{C}}$. We define $\ell^{\cal{C}} = (\ell^{\cal{C}}_t)_{t \in [0, T_{\cal{C}})}$ by $\ell^{\cal{C}}_t = \ell^n_t$ for $t \in [0, T_{\cal{C}})$, where $n$ is minimal with $t < T_n$. Clearly, $\ell^{\cal{C}}$ is a solution to McKean--Vlasov SDE \eqref{eq:probab_repr_reflected} on $[0, T_{\cal{C}})$ and $\ell^{\cal{C}} \leq \bar{\ell}_t$ for $t \in [0, T_{\cal{C}})$, so $(T_{\cal{C}}, \ell^\cal{C})$ is an upper bound of $\cal{C}$. This allows us to apply Zorn's lemma, yielding a maximal element $(T_{\ast}, \ell^{\ast})$ of $\cal{S}$, corresponding to a solution $\ell^{\ast}$ McKean--Vlasov SDE \eqref{eq:probab_repr_reflected} that cannot be extended to a contiguous half-open interval strictly including $[0, T_{\ast})$. 

\textit{Global existence:} It remains to prove that $T_{\ast} = \infty$. Recall that in the case that $m_-$ vanishes, the solution $\ell^+$ with initial condition $m = m_+$ is already global, so there is nothing to show. Thus, in the following we assume that $m_-$ is nontrivial. We argue by contradiction, supposing that $T_{\ast} < \infty$. Since $\ell^{\ast}$ is upper bounded by $\bar{\ell}$, the finiteness of $T_{\ast}$ implies that $\sup_{t \in [0, T_{\ast})} \ell^{\ast}_t \leq \sup_{t \in [0, T_{\ast})} \bar{\ell}_t = \bar{\ell}_{T_{\ast}} < \infty$. But then it follows from the inequalities
\begin{equation*}
    \ev\biggl[\sup_{0 \leq s \leq t} (x + W_s - \alpha \ell^{\ast}_s)_-\biggr] \leq \frac{\sqrt{2t}}{\sqrt{\pi}} + \alpha \bar{\ell}_t
\end{equation*}
for $x \geq 0$ and $t \in [0, T_{\ast})$, and
\begin{equation*}
    \ev\biggl[\sup_{0 \leq s \leq t} (x + W_s - \alpha \ell^{\ast}_s)_-\biggr] \leq \frac{\sqrt{2 t}}{\sqrt{\pi}} \exp\biggl(\frac{3 (\alpha \bar{\ell}_t)^2}{2t}\biggr) e^{-\frac{x^2}{4t}}
\end{equation*}
for $x \geq \alpha \bar{\ell}_t$ and $t \in [0, T_{\ast})$ that
\begin{equation*}
    \lVert \ell^{\ast}\rVert_{\textup{TV}, [0, t]} \leq \lvert m\rvert([0, \alpha \bar{\ell}_t]) \biggl(\frac{\sqrt{2t}}{\sqrt{\pi}} + \alpha \bar{\ell}_t\biggr) + \int_{(\alpha \bar{\ell}_t, \infty)} \frac{\sqrt{2 t}}{\sqrt{\pi}} \exp\biggl(\frac{3 (\alpha \bar{\ell}_t)^2}{2t}\biggr) e^{-\frac{x^2}{4t}} \, \d \lvert m\rvert(x),
\end{equation*}
where $\lVert \cdot\rVert_{\textup{TV}, [0, t]}$ denotes the total variation over the interval $[0, t]$. Since the right-hand side is bounded uniformly over $t \in [0, T_{\ast})$, the function $\ell^{\ast} \in C([0, T_{\ast}))$ can be extended to a bounded and continuous function on $[0, T_{\ast}]$. 

Now, define the signed measure $m_{\ast}$ on $[0, \infty)$ by
\begin{equation*}
    m_{\ast}(A) = \int_{[0, \infty)} \pr\bigl(X^{\ell^{\ast}, x}_{T_{\ast}} \in A\bigr) \, \d m(x)
\end{equation*}
for Borel measurable $A \subset [0, \infty)$. The signed measure $m_{\ast}$ satisfies Assumption \ref{ass:integrability} by Lemma \ref{lem:ass_propagation}. Hence, by the first step of the proof, we can find a (possibly local-in-time) solution $\tilde{\ell}^{\ast}$ with initial condition $m_{\ast}$ on some interval $[0, \tilde{T}_{\ast})$ for $\tilde{T}_{\ast} \in (0, \infty]$ such that $\tilde{\ell}^{\ast}_t \leq \bar{\ell}_{t - T_{\ast}}$ for $t \in [0, T_{\ast})$. But then the continuous function on $[0, T_{\ast} + \tilde{T}_{\ast})$ defined to be equal to $\ell^{\ast}_t$ for $t \in [0, T_{\ast}]$ and $\ell^{\ast}_{T_{\ast}} + \tilde{\ell}^{\ast}_{t - T_{\ast}}$ for $t \in (T_{\ast}, T_{\ast} + \tilde{T}_{\ast})$ solves McKean--Vlasov SDE \eqref{eq:probab_repr_reflected} on the larger half-open interval $[0, T_{\ast} + \tilde{T}_{\ast})$ and is upper bounded by $\bar{\ell}$. This contradicts the maximality of $\ell^{\ast}$ and, thus, concludes the proof.
\end{proof}

The final missing link is the equivalence between the mean-field limit, McKean--Vlasov SDE \eqref{eq:mfl}, and the probabilistic representation of the supercooled Stefan problem provided by McKean--Vlasov SDE \eqref{eq:probab_repr_hitting}. We shall establish this in the following section.

\section{Existence of Strong Solutions of McKean--Vlasov SDE \texorpdfstring{\eqref{eq:mfl}}{(EQ)}} \label{sec:strong_existence}

In this section, we obtain the existence of solutions to the reflected McKean--Vlasov SDE \eqref{eq:mfl} in the strong sense. This existence result follows from the equivalence between McKean--Vlasov SDEs \eqref{eq:probab_repr_hitting} and \eqref{eq:mfl}. To obtain the latter we will require a superposition principle for reflected SDEs on the half-line $[0, \infty)$ with drifts of finite variation. We will, in fact, provide a more general result that covers other cases that may be of independent interest. For simplicity, we restrict our attention to bounded coefficients.

Let $b$, $\sigma \define [0, \infty) \times [0, \infty) \to \R$ be bounded and measurable functions and let $f$, $g \in C([0, \infty))$ be nondecreasing paths started from the origin. Note that for the purpose of stating and proving the superposition principle, the symbol $b$ will be used to denote a coefficient. This should not be confused with the usage of $b$ as a solution of McKean--Vlasov SDE \eqref{eq:mfl} in the strong sense as it appears in the proof of Theorem \ref{thm:equivalence} further below. We consider the Fokker--Planck equation
\begin{equation} \label{eq:fpe_reflected}
    \d \langle \mu_t, \varphi\rangle = \bigl\langle \mu_t, b(t, \cdot) \partial_x \varphi\bigr\rangle \, \d f_t + \frac{1}{2} \bigl\langle \mu_t, \sigma^2(t, \cdot) \partial_x^2 \varphi\bigr\rangle \, \d g_t
\end{equation}
for $\varphi \in C^2_c(\R)$ with $\partial_x \varphi(0) = 0$ with an initial condition $\mu_0 \in \P([0, \infty))$. 

\begin{proposition} \label{prop:superposition}
Suppose that $\mu = (\mu_t)_{t \geq 0} \in C([0, \infty); \P([0, \infty)))$ is a solution to PDE \eqref{eq:fpe_reflected} such that $\int_0^t \mu_s(\{0\}) \, \d f_s = 0$ for all $t \geq 0$. Then, $\mu_t = \L(X_t)$ for $t \geq 0$, where $X$ is a weak solution of the reflected SDE
\begin{equation} \label{eq:sde_super}
    \d X_t = b(t, X_t) \, \d f_t + \sigma(t, X_t) \, \d W_{g_t} + \d L_t,
\end{equation}
with initial condition $X_0 \sim \mu_0$ independent of the Brownian motion $W$.
\end{proposition}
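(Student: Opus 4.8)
The plan is to invoke a version of Figalli's / Trevisan's superposition principle for martingale problems, adapted to the reflected setting on $[0,\infty)$. The natural strategy is to first remove the time-change coming from $f$ and $g$, reducing \eqref{eq:fpe_reflected} to a Fokker--Planck equation driven by Lebesgue measure, and then to apply the classical superposition principle for the resulting (possibly degenerate, time-inhomogeneous) diffusion on the half-line with Neumann-type boundary behaviour encoded by the constraint $\int_0^t \mu_s(\{0\})\,\d f_s=0$.

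\textbf{Step 1: Reduction via time change.} Since $f$ and $g$ are nondecreasing and continuous, started at $0$, introduce the increasing process $h_t = t + f_t + g_t$ and its right-continuous inverse $h^{-1}$. Reparametrising time by $h^{-1}$ turns \eqref{eq:fpe_reflected} into a Fokker--Planck equation of the form $\d\langle\bar\mu_s,\varphi\rangle = \langle\bar\mu_s, b\,\partial_x\varphi\rangle\,\d\bar f_s + \tfrac12\langle\bar\mu_s,\sigma^2\partial_x^2\varphi\rangle\,\d\bar g_s$ where now $\bar f$, $\bar g$ are Lipschitz (indeed $1$-Lipschitz) in the new time variable, hence absolutely continuous with bounded densities $\dot{\bar f}$, $\dot{\bar g}$. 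One must check the reduction is reversible, i.e.\ that a weak solution of the time-changed SDE yields, after time change by $h$, a weak solution of \eqref{eq:sde_super}; this is routine because the local time $L$ and the Brownian integral both behave well under continuous time changes (the Brownian term $\sigma(t,X_t)\,\d W_{g_t}$ is designed precisely so that $g$ plays the role of the clock for $W$).

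\textbf{Step 2: Superposition for the absolutely continuous problem.} On the new time scale we have a Fokker--Planck equation with bounded measurable coefficients against test functions $\varphi\in C^2_c(\R)$ with $\partial_x\varphi(0)=0$. This is exactly the Fokker--Planck equation associated with the submartingale problem for a reflected diffusion on $[0,\infty)$ (in the sense of Stroock--Varadhan / Lions--Sznitman). I would apply the superposition principle in the form of Trevisan (Electron.\ J.\ Probab.\ 2016) or Figalli, suitably adapted so that the generator $\mathcal L_s\varphi = b(s,\cdot)\dot{\bar f}_s\partial_x\varphi + \tfrac12\sigma^2(s,\cdot)\dot{\bar g}_s\partial_x^2\varphi$ on the domain $\{\varphi\in C^2_c:\partial_x\varphi(0)=0\}$ produces a probability measure on path space $C([0,\infty);[0,\infty))$ under which the canonical process solves the corresponding martingale problem with one-dimensional time marginals $\bar\mu_s$. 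The condition $\int_0^t\mu_s(\{0\})\,\d f_s=0$ is what guarantees that the boundary term contributes nothing to the martingale problem, so that no pathology occurs at $x=0$ and the reflected generator is the correct one; it also ensures that the regulator $L$ (which exists by the Skorokhod decomposition of the resulting semimartingale) increases only on $\{X_t=0\}$ as required, and that the $\d f_t$-drift does not secretly push mass through the origin.

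\textbf{Step 3: From martingale problem to SDE and back through the time change.} Having obtained a process $\bar X$ on path space solving the martingale problem, one upgrades it to a weak solution of the reflected SDE in the absolutely continuous time scale by the standard recipe: apply the martingale representation theorem (after the Skorokhod reflection decomposition $\bar X = \bar X_0 + (\text{martingale}) + (\text{BV drift}) + \bar L$) on a possibly enlarged probability space to extract a Brownian motion $\bar W$ with $\d\bar X_s = b(s,\bar X_s)\,\d\bar f_s + \sigma(s,\bar X_s)\,\d\bar W_{\bar g_s} + \d\bar L_s$; when $\sigma$ degenerates the enlargement of the space handles the missing randomness in the usual way. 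Finally, undo the time change from Step 1 to get a weak solution $X$ of \eqref{eq:sde_super} with $\L(X_t)=\mu_t$ and $X_0\sim\mu_0$ independent of $W$.

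\textbf{Main obstacle.} The genuinely delicate point is Step 2, namely justifying the superposition principle in the presence of the reflecting boundary at $0$ with merely measurable coefficients, since the off-the-shelf statements are for generators on all of $\R^d$ (or on nice manifolds without boundary). The cleanest route is probably to extend $b,\sigma$ to $\R$ by even reflection about the origin (so that the $\partial_x\varphi(0)=0$ test functions correspond, under the folding map $x\mapsto|x|$, to general $C^2_c(\R)$ test functions) and to push the measures $\mu_t$ forward/back under $x\mapsto|x|$, thereby converting the reflected Fokker--Planck equation on $[0,\infty)$ into an ordinary Fokker--Planck equation on $\R$ with the (still bounded, measurable) folded coefficients; the hypothesis $\int_0^t\mu_s(\{0\})\,\d f_s=0$ is exactly what makes this folding/unfolding consistent at the boundary. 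One then applies the standard superposition principle on $\R$ and unfolds the resulting path-space measure, checking that the unfolded canonical process acquires the correct local time at $0$. The remaining steps are technical but standard.
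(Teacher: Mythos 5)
Your proposal takes essentially the same route as the paper: a time change via $h_t = t + f_t + g_t$ turning the integrators into Lipschitz functions, folding the half-line problem onto $\R$ under $x\mapsto\lvert x\rvert$, an application of Figalli's superposition principle on $\R$, and an unfolding via It\^o--Tanaka to recover the regulator; the paper merely does the folding before the time change, which is immaterial. The one substantive detail to fix is the sign of the extended drift in your final paragraph: the drift must be extended to $\R$ as the \emph{odd} function $\sgn(x)\,b(t,\lvert x\rvert)$, not the even one. Indeed, if $Z$ solved a whole-line SDE with the even drift $b(t,\lvert Z_t\rvert)$, It\^o--Tanaka would give $\lvert Z\rvert$ the drift $\sgn(Z_t)\,b(t,\lvert Z_t\rvert)$, which is wrong; the $\sgn$ factor must already sit in the drift so that it is cancelled by the $\sgn(Z_t)$ coming from differentiating $\lvert\cdot\rvert$. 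The same is forced by the test-function computation: with $\psi(x) = \tfrac12(\varphi(x)+\varphi(-x))$ one has $\partial_x\psi(x) = \tfrac12\bigl(\sgn(x)\partial_x\varphi(x) + \sgn(x)\partial_x\varphi(-x)\bigr)$ away from the origin, and this $\sgn$ is precisely what ends up in the drift of the whole-line Fokker--Planck equation for the symmetrised measure. The diffusion coefficient is extended evenly, as you say, since it only enters squared. Also, the folding matches test functions $\varphi$ on $[0,\infty)$ with $\partial_x\varphi(0)=0$ to \emph{even} $C^2_c(\R)$ functions, not arbitrary ones, but since the whole-line equation is tested against the symmetric measure $\tilde\mu_t$ this distinction disappears. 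Your remaining steps agree with the paper's modulo the tool used to extract the Brownian motion in the form $\d W_{g_t}$: the paper applies Dambis--Dubins--Schwarz twice, while your martingale-representation route (on a possibly enlarged space) works as well.
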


\begin{proof}
We proceed in several steps. First, we transform PDE \eqref{eq:fpe_reflected} to a PDE on the whole space. Next, we apply a suitable time change to obtain a PDE that is driven by differentiable integrators in place of $f$ and $g$. This PDE will be within the scope of the classical superposition principle by Figalli \cite[Theorem 2.6]{figalli_superposition_2008}, which does most of the heavy lifting of the proof. This superposition principle provides a weak solution to a modified SDE, from which we can construct a weak solution to SDE \eqref{eq:sde_super} by reversing the transformations applied to PDE \eqref{eq:fpe_reflected}.

Let us define the flow of probability measures $\tilde{\mu} = (\tilde{\mu}_t)_{t \geq 0}$ on $\R$ by
\begin{equation*}
    \tilde{\mu}_t(A) = \frac{1}{2} \int_{[0, \infty)} \bf{1}_A(x) + \bf{1}_A(-x) \, \d \mu_t(x)
\end{equation*}
for $A \in \cal{B}(\R)$ and $t \geq 0$. Additional, define the coefficients $\tilde{b}$, $\tilde{\sigma} \define [0, \infty) \times \R \to \R$ by $\tilde{b}(t, x) = b(t, \lvert x\rvert)$ and $\tilde{\sigma}(t, x) = \sigma(t, \lvert x\rvert)$ for $(t, x) \in [0, \infty) \times \R$. Note that the definition of $\tilde{\mu}_t$ implies that for any measurable and bounded $\varphi \define \R \to \R$ that $\langle \tilde{\mu}_t, \varphi\rangle = \frac{1}{2} \langle \mu_t, \varphi + \varphi_-\rangle$, where $\varphi_-(x) = \varphi(x)$ for $x \in \R$. Now, fix a function $\varphi \in C_c^2(\R)$ and define $\psi \in C_c^2(\R)$ by $\psi = \frac{1}{2} (\varphi + \varphi_-)$. Then, we compute
\begin{equation*}
    \partial_x \psi = \frac{1}{2}\bigl(\partial_x\varphi - (\partial_x \varphi)_-\bigr) = \frac{1}{2} \bigl(\sgn \partial_x \varphi + (\sgn \partial_x \varphi)_-\bigr) - \bf{1}_{\{x = 0\}} \partial_x \varphi
\end{equation*}
and $\partial_x^2 \psi = \frac{1}{2}(\partial_x^2 \varphi + (\partial_x^2 \varphi)_-)$, where we recall that $\sgn(x) = \bf{1}_{x > 0} - \bf{1}_{x \leq 0}$. Note that the former in particular implies that $\partial_x \psi(0) = 0$, so that $\psi$ is an admissible test function for PDE \eqref{eq:fpe_reflected}. Consequently, we deduce
\begin{align} \label{eq:fpe_whole}
    \d \langle \tilde{\mu}_t, \varphi\rangle &= \d \langle \mu_t, \psi\rangle \notag \\
    &= \bigl\langle \mu_t, b(t, \cdot) \partial_x \psi\bigr\rangle \, \d f_t + \frac{1}{2} \bigl\langle \mu_t, \sigma^2(t, \cdot) \partial_x^2 \psi\bigr\rangle \, \d g_t \notag \\
    &= \frac{1}{2} \Bigl\langle \mu_t, b(t, \cdot) \bigl(\sgn \partial_x \varphi + (\sgn \partial_x \varphi)_-\bigr)\Bigr\rangle \, \d f_t + \frac{1}{4} \Bigl\langle \mu_t, \sigma^2(t, \cdot) \bigl(\partial_x^2 \varphi + (\partial_x^2 \varphi)_-\bigr)\Bigr\rangle \, \d g_t \notag \\
    &\ \ \ - \partial_x \varphi(0) \mu_t(\{0\}) \, \d f_t \notag \\
    &= \bigl\langle \tilde{\mu}_t, \sgn \tilde{b}(t, \cdot) \partial_x \varphi\bigr\rangle \, \d f_t + \frac{1}{2} \bigl\langle \tilde{\mu}_t, \tilde{\sigma}^2(t, \cdot) \partial_x^2 \varphi\bigr\rangle \, \d g_t.
\end{align}
Here we used in the last equality that $\tilde{b}(t, \cdot)_- = \tilde{b}(t, \cdot)$ and $\tilde{\sigma}(t, \cdot)_- = \tilde{\sigma}(t, \cdot)$, and that $\int_0^t \mu_s(\{0\}) \, \d f_s$ vanishes. The above PDE \eqref{eq:fpe_whole} is the desired whole space equation for $\tilde{\mu}$. 

Next, we introduce the time change. Let us define the $\varrho \define [0, \infty) \to [0, \infty)$ to be the inverse of the continuous and strictly increasing function $[0, \infty) \ni t \mapsto f_t + g_t + t$. By definition, it holds for $0 \leq s \leq t$ that
\begin{equation*}
    \bigl\lvert f_{\varrho_t} - f_{\varrho_s} \bigr\rvert + \bigl\lvert g_{\varrho_t} - g_{\varrho_s} \bigr\rvert + \lvert \varrho_t - \varrho_s \rvert = t - s,
\end{equation*}
so both $f_{\varrho}$ and $g_{\varrho}$ are $1$-Lipschitz and, consequently, admit derivatives $F$, $G \define [0, \infty) \to [0, 1]$. Let us now define the flow of probabilities $\nu = (\nu_t)_{t \geq 0}$ on $\R$ by $\nu_t = \tilde{\mu}_{\varrho_t}$ for $t \geq 0$. Appealing to the PDE \eqref{eq:fpe_whole} satisfied by $\tilde{\mu}$, a simple application of the fundamental theorem of calculus shows that
\begin{equation*}
    \d \langle \nu_t, \varphi\rangle = \bigl\langle \nu_t, \sgn F_t \tilde{b}(\varrho_t, \cdot) \partial_x \varphi\bigr\rangle \, \d t + \frac{1}{2} \bigl\langle \nu_t, G_t \tilde{\sigma}^2(\varrho_t, \cdot) \partial_x^2 \varphi\bigr\rangle \, \d t
\end{equation*}
for $\varphi \in C_c^2(\R)$. Thus, by \cite[Theorem 2.6]{figalli_superposition_2008}, on a suitable filtered probability space $(\Omega, \F, \bb{F}, \pr)$ supporting an $\bb{F}$-Brownian motion $\tilde{W}$ and a real-valued $\F_0$-measurable random variable $\tilde{\xi}$ with distribution $\nu_0 = \tilde{\mu}_0$, we can find an $\bb{F}$-adapted weak solution $\tilde{Z} = (\tilde{Z}_t)_{t \geq 0}$ to the SDE
\begin{equation*}
    \d \tilde{Z}_t = \sgn(\tilde{Z}_t) F_t \tilde{b}(\varrho_t, \tilde{Z}_t) \, \d t + \sqrt{G_t} \tilde{\sigma}(\varrho_t, \tilde{Z}_t) \, \d \tilde{W}_t.
\end{equation*}
Next, define the process $Z = (Z_t)_{t \geq 0}$ by $Z_t = \tilde{Z}_{h_t}$ for $t \geq 0$, where we set $h = f + g + \id_{[0, \infty)}$ for notational simplicity. This process is adapted to the filtration $\bb{G} = (\cal{G}_t)_{t \geq 0}$ given by $\cal{G} = \F_{h_t}$ for $t \geq 0$. By another application of the fundamental theorem of calculus, we find that
\begin{equation} \label{eq:calc_f_int}
    \int_0^{h_t} \sgn(\tilde{Z}_s) F_s \tilde{b}(\varrho_s, \tilde{Z}_s) \, \d s = \int_0^t \sgn(Z_s) \tilde{b}(s, Z_s) F_{h_s} \, \d h_s = \int_0^t \sgn(Z_s) \tilde{b}(s, Z_s) \, \d f_s,
\end{equation}
where we used in the first equality that $\tilde{Z}_{h_s} = Z_s$ and $\varrho_{h_s} = s$ by definition of $Z$ and $\varrho$. The second equality follows from the fact that $f_{\varrho_t} = \int_0^t F_s \, \d s$, which implies $f_t = \int_0^t F_{h_s} \, \d h_s$. To get the martingale term into an appropriate form, we will have to apply the Dambis--Dubins--Schwarz twice. Firstly, by enlarging the probability space if necessary, we can find another Brownian motion $\tilde{B} = (\tilde{B}_t)_{t \geq 0}$ such that $\tilde{B}_{A_t} = \int_0^{h_t} \sqrt{G_s} \tilde{\sigma}(\varrho_s, \tilde{Z}_s) \, \d \tilde{W}_s$ for $t \geq 0$, where $A = (A_t)_{\geq 0}$ is given by
\begin{equation*}
    A_t = \int_0^{h_t} G_s \tilde{\sigma}^2(\varrho_s, \tilde{Z}_s) \, \d s = \int_0^t \tilde{\sigma}^2(s, Z_s) G_{h_s} \, \d s = \int_0^t \tilde{\sigma}^2(s, Z_s) \, \d g_s.
\end{equation*}
The above calculation is analogous to \eqref{eq:calc_f_int}. Set $\tilde{M} = \tilde{B}_A$, so that $\tilde{M}$ is a $\bb{G}$-martingale with $\langle \tilde{M}\rangle = A$. It holds that $\int_0^t \frac{1}{\tilde{\sigma}^2(s, Z_s)} \, \d \langle M\rangle_s = g_t < \infty$, so the $\bb{G}$-progressively measurable process $(\frac{1}{\tilde{\sigma}(t, Z_t)})_{t \geq 0}$ is integrable with respect to $\tilde{M}$. Thus, we can define the $\bb{G}$-martingale $M = \int_0^{\cdot} \frac{1}{\tilde{\sigma}(t, Z_t)} \, \d \tilde{M}_t$. By definition, it holds that $\langle M\rangle = g$, so by the Dambis--Dubins--Schwarz, there exists a Brownian motion $B = (B_t)_{t \geq 0}$ such that $M_t = B_{g_t}$. In particular, retracing the above steps, we find that
\begin{equation*} 
    \int_0^t \tilde{\sigma}(s, Z_s) \, \d B_{g_s} = \tilde{M}_t = \tilde{B}_{A_t} = \int_0^{h_t} \sqrt{G_s} \tilde{\sigma}(\varrho_s, \tilde{Z}_s) \, \d \tilde{W}_s.
\end{equation*}
Combining the above display with \eqref{eq:calc_f_int} shows that $Z$ satisfies the SDE
\begin{equation*}
    \d Z_t = \sgn(Z_t) \tilde{b}(t, Z_t) \, \d f_t + \tilde{\sigma}(t, Z_t) \, \d B_{g_t}.
\end{equation*}

The final step is convert the above SDE, defined on the whole space, into a reflected SDE on the half-line $[0, \infty)$. For that, we define the process $X = (X_t)_{t \geq 0}$ by $X_t = \lvert Z_t\rvert$, so that $X_0 = \lvert Z_0\rvert \sim \mu_0$. By the It\^o--Tanaka formula (cf.\@ \cite[Chapter 3, Theorem 7.1]{karatzas_bmsc_1998}), we have that
\begin{align*}
    \d X_t &= \d \lvert Z_t\rvert \\
    &= \tilde{b}(t, Z_t) \, \d f_t + \tilde{\sigma}(t, Z_t) (\sgn(Z_t) \, \d B_{g_t}) + \frac{1}{2} \, \d L^Z_t \\
    &= b(t, X_t) \, \d f_t + \sigma(t, X_t) \, \d W_{g_t} + \d L_t,
\end{align*}
where $W = (W_t)_{t \geq 0}$ is a Brownian motion constructed in a similar way as the ones above, $L^Z = (L^Z)_{t \geq 0}$ is the local time of $Z$ at zero, and $L = (L)_{t \geq 0}$ is given by $L_t = \frac{1}{2} L^Z_t$. Here we used in the third equality that by definition $\tilde{b}(t, Z_t) = b(t, \lvert Z_t\rvert) = b(t, X_t)$. The same holds for the diffusion coefficient. The process $X$ is the desired weak solution to the reflected SDE \eqref{eq:sde_super}.
\end{proof}

Let us finally deliver the proof of Theorem \ref{thm:equivalence}. Throughout this proof, we let $W$ be a Brownian motion on some probability space $(\Omega, \F, \pr)$ that, in addition, carries the initial condition $\xi$ of McKean--Vlasov SDE \eqref{eq:mfl}, which is independent of $W$. Recall that we denote the distribution of $\xi$ by $\mu_0$ and have $\nu_0 = \frac{1}{\alpha} - \mu_0$. By Assumption \ref{ass:bounded_variation}, the (signed) measures $\mu_0$ and $\nu_0$ admit a c\`adl\`ag density of locally finite total variation.

\begin{proof}[Proof of Theorem \ref{thm:equivalence}]
We begin with the much more involved `if' part of the equivalence statement. This together with the uniqueness of McKean--Vlasov SDE \eqref{eq:mfl} (Proposition \ref{prop:mfl_strong_uniqueness}) then implies the `only if' direction. The idea of the proof is as follows: let $\ell$ be a solution of McKean--Vlasov SDE \eqref{eq:probab_repr_hitting} and denote the solution to the Skorokhod problem for $x + W - \alpha \ell$ by $(X^x, L^x)$ for $x \geq 0$. Set $\tau_x = \inf\{t > 0 \define X^x_t \leq 0\}$ and, for $t \geq 0$, define the locally finite signed measure $\nu_t$ on $[0, \infty)$ by
\begin{equation} \label{eq:def_of_nu}
    \nu_t(A) = \int_0^{\infty} \pr\bigl(X^x_t \in A, \, \tau_x > t\bigr) \nu_0(x) \, \d x
\end{equation}
for $A \in \cal{B}([0, \infty))$. According to the arguments outlined in the introduction, the measure $\mu_t = \frac{1}{\alpha} - \nu_t$ should be a solution to the Fokker--Planck equation
\begin{equation} \label{eq:fpe_2}
    \d \langle \mu_t, \varphi\rangle = - \alpha \langle \mu_t, \partial_x \varphi\rangle \, \d \ell_t + \frac{1}{2} \langle \mu_t, \partial_x^2 \varphi\rangle \, \d t
\end{equation}
for test functions $\varphi \in C_c^2(\R)$ with $\partial_x \varphi(0) = 0$. This is the distributional version of PDE \eqref{eq:fpe}, modulo a translation of the domain by $- \alpha \ell_t$. The requirement $\partial_x \varphi(0) = 0$ accounts for the boundary conditions of PDE \eqref{eq:fpe}. If this is indeed the case, then the superposition principle, Proposition \ref{prop:superposition}, implies that $\mu_t = \L(\tilde{X}_t)$ for $t \geq 0$, where $(\tilde{X}, L)$ is the solution to the Skorokhod problem for $\xi + W - \alpha \ell$. Thus, to conclude that $\alpha \ell$ solves McKean--Vlasov SDE \eqref{eq:mfl} in the sense of Definition \ref{def:probab_repr_hitting}, it then remains to prove that $\ev[L_t] = \gamma t$ for $t \geq 0$. Since by the occupation time formula, we have that $\ev[L_t] = \frac{1}{2} \int_0^t \mu_s(0) \, \d s$, this follows if we can show that $\mu_s(0) = 2\gamma = \frac{1}{\alpha}$ for a.e.\@ $s \geq 0$. Note that here we are tacitly assuming that $\mu_t$ has a sufficiently regular density, which we must verify too. Let us establish all of the above facts.

We begin by proving that $\mu = (\mu_t)_{t \geq 0}$ satisfies the desired PDE \eqref{eq:fpe_2}. Fix a test function $\varphi \in C_c^2(\R)$ with $\partial_x \varphi(0) = 0$. We have that
\begin{equation*}
    \langle \mu_t, \varphi\rangle = \int_0^{\infty} \Bigl(\frac{1}{\alpha} \varphi(x) - \ev\bigl[\bf{1}_{\{\tau_x > t\}} \varphi(X^x_t)\bigr] \nu_0(x)\Bigr) \, \d x.
\end{equation*}
Let us treat the two summands in the integral on the right-hand side separately. Using that $\partial_x \varphi(0) = 0$, we get that $\int_0^{\infty} \partial_x \varphi(x) \, \d x = - \varphi(0)$ and $\int_0^{\infty} \partial_x^2 \varphi(x) \, \d x = 0$. Thus, letting $\lambda$ be the Lebesgue measure on $[0, \infty)$, we find
\begin{equation} \label{eq:eq_for_lebesgue}
    \langle \lambda, \varphi\rangle = \langle \lambda, \varphi\rangle - \alpha \int_0^t \langle \lambda, \partial_x \varphi\rangle \, \d \ell_s + \frac{1}{2} \int_0^t \langle \lambda, \partial_x^2 \varphi\rangle \, \d s - \alpha \varphi(0) \ell_t
\end{equation}
for $t \geq 0$. Next, let $\nu_t^x = \pr(X^x_t \in \cdot,\, \tau_x > t)$ for $x \geq 0$ and $t \geq 0$. Then, applying It\^o's formula to $\bf{1}_{\{\tau_x > t\}} \varphi(X^x_t)$ and taking the expectation of the resulting expression yields
\begin{equation*}
    \langle \nu^x_t, \varphi\rangle = \langle \nu^x_0, \varphi\rangle - \alpha \int_0^t \langle \nu^x_s, \partial_x \varphi\rangle \, \d \ell_s + \frac{1}{2} \int_0^t \langle \nu^x_s, \partial_x^2 \varphi\rangle \, \d s - \alpha \varphi(0) \pr(\tau_x \leq t).
\end{equation*}
Now, we multiply both sides of this equation by $\nu_0(x)$ and then integrate over $x \in [0, \infty)$. Using that $\nu_t = \int_0^{\infty} \nu^x_t \nu_0(x) \, \d x$ by \eqref{eq:def_of_nu} and the fact that
\begin{equation*}
    \ell_t = \int_0^{\infty} \pr(\tau_x \leq t) \nu_0(x) \, \d x
\end{equation*}
by the solution property of $\ell$, we get that
\begin{equation*}
    \langle \nu_t, \varphi\rangle = \langle \nu_0, \varphi\rangle - \alpha \int_0^t \langle \nu_s, \partial_x \varphi\rangle \, \d \ell_s + \frac{1}{2} \int_0^t \langle \nu_s, \partial_x^2 \varphi\rangle \, \d s - \alpha \varphi(0) \ell_t.
\end{equation*}
Finally, we multiply both sides of \eqref{eq:eq_for_lebesgue} by $\frac{1}{\alpha}$ and then subtract the above equation from this expression. Since $\mu_t = \frac{1}{\alpha} \lambda - \nu_t$ for $t \geq 0$, this shows that $\mu$ solves PDE \eqref{eq:fpe_2}.

To apply Proposition \ref{prop:superposition}, we have to verify that $\mu_t$ takes values in $\P([0, \infty))$ and that $\int_0^t \mu_s(\{0\}) \, \d \ell_s = 0$. We first show that the negative part of $\mu_t$ (as a signed measure) vanishes. This is done by proving that $\mu_t$ admits a nonnegative density, which at the same time implies that $\int_0^t \mu_s(\{0\}) \, \d \ell_s$ vanishes. Let us first find a convenient expression for the measure $\nu_t$. On $\{\tau_x > t\}$, the regulator $L^x_t$ vanishes, so for any $s \in [0, t]$, we have that
\begin{equation} \label{eq:expression_for_x}
    X^x_s = x - \alpha \ell_s + W_s = - \alpha \ell_s + W_s + L^0_s + x - L^0_s = X^0_s + x - L^0_s.
\end{equation}
Next, we claim that $\tau_x > t$ if and only if $L^0_t < x$. Suppose that $\tau_x > t$ and let $\varrho = \max\{s \in [0, t] \define X^0_s = 0\} \leq t$. Then, by \eqref{eq:expression_for_x}, we have $0 < X^x_{\varrho} = X^0_{\varrho} + x - L^0_{\varrho} = x - L^0_{\varrho}$, so that $L^0_t = L^0_{\varrho} < x$. Here we used that $X^0_{\varrho}$ vanishes and that $L^0$ does not move on $[\varrho, t]$. Conversely, if $L^0_t < x$, then for any $s \in [0, t]$, we have
\begin{equation*}
    0 \leq X^0_s = - \alpha \ell_s + W_s + L^0_s < x - \alpha \ell_s + W_s \leq X^x_s.
\end{equation*}
But this means that $\tau_x$ does not occur before or at time $t$. Combining \eqref{eq:expression_for_x} and $\{\tau_x > t\} = \{L^0_t < x\}$ allows us to write
\begin{align*}
    \langle \nu_t, \varphi\rangle &= \ev\biggl[\int_0^{\infty} \bf{1}_{\{L^0_t < x\}} \varphi\bigl(X^0_t + x - L^0_t\bigr) \nu_0(x) \, \d x\biggr] \\
    &= \ev\biggl[\int_0^{\infty} \bf{1}_{\{X^0_t < x\}} \varphi(x) \nu_0\bigl(x + \alpha \ell_t - W_t\bigr) \, \d x \biggr] \\
    &= \int_0^{\infty} \ev\Bigl[\bf{1}_{\{X^0_t \leq x\}}\nu_0\bigl(x + \alpha \ell_t - W_t\bigr) \Bigr] \varphi(x) \, \d x
\end{align*}
for bounded and measurable $\varphi \define [0, \infty) \to [0, \infty)$ with compact support. In particular, letting $\varphi = \bf{1}_{[x, x + h]}$ for $x$, $h \geq 0$, gives
\begin{equation*}
    \lim_{h \to 0} \frac{1}{h} \nu_t([x, x + h]) = \ev\Bigl[\bf{1}_{\{X^0_t \leq x\}}\nu_0\bigl(x + \alpha \ell_t - W_t\bigr) \Bigr],
\end{equation*}
where we use that $\nu_0$ is c\`adl\`ag by assumption. Consequently, $\mu_t$ admits a density (denoted by the same symbol), which takes the form
\begin{align} \label{eq:exp_for_density}
    \mu_t(x) &= \frac{1}{\alpha} - \ev\Bigl[\bf{1}_{\{X^0_t \leq x\}}\nu_0\bigl(x + \alpha \ell_t - W_t\bigr) \Bigr] \notag \\
    &= \frac{1}{\alpha} \pr(X^0_t > x) + \ev\Bigl[\bf{1}_{\{X^0_t \leq x\}}\mu_0\bigl(x + \alpha \ell_t - W_t\bigr) \Bigr].
\end{align}
The second line follows from the identity $\mu_0 = 1 - \nu_0$. Clearly, the expression on the right-hand side is nonnegative, so the same is true for $\mu_t$. Let us also highlight that it is c\`adl\`ag in $x \in [0, \infty)$, so $\mu_t$ is a c\`adl\`ag function. To show that $\mu_t([0, \infty)) = 1$ for all times $t \geq 0$, let us fix a function $\kappa \in C_c^2(\R)$ such that $\kappa(x) = 0$ for $x$ outside of $[0, 2)$, $\kappa(x) = 1$ for $x \in [-1, 1]$, and $\kappa$ is nonincreasing on $[0, \infty)$. Then, set $\kappa_n(x) = \kappa((\lvert x\rvert - n)_+)$ for $x \in \R$. Clearly, $\kappa_n \in C_c^2(\R)$, $\kappa_n(x) = 1$ for $x \in [0, n + 1]$, and $\partial_x \kappa_n$ and $\partial_x^2 \kappa_n$ vanish on $[0, \infty) \setminus (n + 1, n + 2)$. Plugging $\kappa_n$ into PDE \eqref{eq:fpe_2} yields
\begin{equation*}
    \langle \mu_t, \kappa_n\rangle = \langle \mu_0, \kappa_n\rangle - \alpha \int_0^t \langle \mu_s, \partial_x \kappa_n\rangle \, \d \ell_s + \frac{1}{2} \int_0^t \langle \mu_s, \partial_x^2 \kappa_n\rangle \, \d s.
\end{equation*}
Due to the nonnegativity of $\mu_t$ and $\mu_0$, the monotone convergence theorem implies that $\langle \mu_t, \kappa_n\rangle \to \mu_t([0, \infty))$ and $\langle \mu_0, \kappa_n\rangle \to \int_0^{\infty} \mu_0(x) \, \d x = 1$ as $n \to \infty$. Thus, to conclude that $\mu_t$ has unit mass, it suffices to show that the time-integrals on the right-hand side above vanish as $n \to \infty$. We have that
\begin{equation*}
    \lvert \langle \mu_s, \partial_x \kappa_n\rangle\rvert \leq \lVert \partial_x \kappa\rVert_{\infty} \int_{n + 1}^{n + 2} \mu_s(x) \, \d x
\end{equation*}
and a similar estimate holds for $\langle \mu_s, \partial_x^2 \kappa_n\rangle$. Hence, the time-integrals vanish by the dominated convergence theorem as soon as $\lim_{x \to \infty} \mu_s(x) = 0$ for $s > 0$. Note that the first summand on the right-hand side of the expression \eqref{eq:exp_for_density}, clearly tends to zero as $x \to \infty$. The second term is bounded from above by $\ev[\mu_0(x + \alpha \ell_s - W_s)]$, where we extend $\mu_0$ to $\R$ by setting $\mu_0(x) = \mu_0(0)$ for $x < 0$. But since $\mu_0$ integrates to one over $[0, \infty)$ and $x \mapsto \ev[\mu_0(x + \alpha \ell_s - W_s)]$ is smooth for $s > 0$, it follows that $\ev[\mu_0(x + \alpha \ell_s - W_s)] \to 0$ as $x \to \infty$. The same must be true for $\mu_s$, so $\mu_t([0, \infty)) = 1$ as required. Together with the nonnegativity of $\mu_t$, this implies $\mu_t \in \P([0, \infty))$.

The observations we made so far allow us to apply the superposition principle, Proposition \ref{prop:superposition}. Thereby, $\mu_t = \L(\tilde{X}_t)$ for $t \geq 0$, where $(\tilde{X}, L)$ is the solution to the Skorokhod problem for $\xi - \alpha \ell + W$. Thus, to show that $b = \alpha \ell$ is a solution to McKean--Vlasov SDE \eqref{eq:mfl} in the sense of Definition \ref{def:solution_strong}, it remains to prove that $\ev[L_t] = \gamma t$ for $t \geq 0$. Proceeding as in \cite[Lemma 1.18]{baker_loc_times_2025}, we first derive the classical identity
\begin{equation*}
    \ev[L_t] = \frac{1}{2} \int_0^t \mu_s(0) \, \d s
\end{equation*}
for $t \geq 0$. Let $\tilde{L}^x$ denote the local time of $\tilde{X}$ at $x \geq 0$, so that $\tilde{L}^0 = 2 L$. Note that $\tilde{L}^x$ can be chosen to be continuous in $t \geq 0$ and c\`adl\`ag in $x \geq 0$. Then, by the occupation time formula (see e.g.\@ \cite[Chapter VI, Corollary 1.6]{revuz_cmbm_1999}), for any $\epsilon > 0$, it holds that
\begin{equation*}
    \frac{1}{\epsilon}\int_0^t \bf{1}_{\{\tilde{X}_s \in [0, \epsilon]\}} \, \d s = \frac{1}{\epsilon}\int_0^{\infty}  \bf{1}_{\{x \in [0, \epsilon]\}} \, \d x.
\end{equation*}
Taking expectation on both sides and then letting $\epsilon \to 0$ gives
\begin{equation} \label{eq:occ_form}
    \int_0^t \mu_s(0) \, \d s = \lim_{\epsilon \to 0} \int_0^t \frac{1}{\epsilon} \biggl(\int_0^{\epsilon} \mu_s(x) \, \d x\biggr) \, \d s = \lim_{\epsilon \to 0} \int_0^{\epsilon} \ev[\tilde{L}^x_t] \, \d x = \ev[\tilde{L}^0_t] = 2\ev[L_t].
\end{equation}
Here we used in the first and third equality that $\mu_t$ and $x \mapsto \tilde{L}^x_t$, respectively, are c\`adl\`ag. By \eqref{eq:exp_for_density}, we have
\begin{equation*}
    \mu_s(0) = \frac{1}{\alpha} \pr(X^0_s > 0) + \ev\Bigl[\bf{1}_{\{X^0_s = 0\}} \mu_0\bigl(x + \alpha \ell_s - W_t\bigr)\Bigr].
\end{equation*}
We shall prove that $\pr(X^0_t = 0) = 0$ for a.e.\@ $s \geq 0$. From this and \eqref{eq:occ_form}, it then follows that $\ev[L_t] = \frac{1}{2} \int_0^t \mu_s(0) \, \d s = \frac{1}{2\alpha} t = \gamma t$. Similarly to \eqref{eq:occ_form}, one can see that
\begin{equation*}
    \ev[L^0_t] = \lim_{\epsilon \to 0} \frac{1}{\epsilon} \int_0^t \pr(X^0_s \in [0, \epsilon]) \, \d s \geq \lim_{\epsilon \to 0} \frac{1}{\epsilon} \int_0^t \pr(X^0_s = 0) \, \d s.
\end{equation*}
But since $\ev[L^0_t]$ is finite, this can only be true if $\pr(X^0_t = 0) = 0$ for a.e.\@ $s \geq 0$. Consequently, $b = \alpha \ell$ is a solution to McKean--Vlasov SDE \eqref{eq:mfl} in the strong sense.

Now, conversely, suppose that $b$ solves McKean--Vlasov SDE \eqref{eq:mfl}. By Theorem \ref{thm:repr_reflected_exist}, McKean--Vlasov SDE \eqref{eq:probab_repr_reflected} has a solution $\ell \in C([0, \infty))$, which in view of Proposition \ref{prop:equivalence} also solves McKean--Vlasov SDE \eqref{eq:probab_repr_hitting}. By what we just proved, this means that $\alpha \ell$ is another solution to McKean--Vlasov SDE \eqref{eq:mfl} in the strong sense. But according to Proposition \ref{prop:mfl_strong_uniqueness}, the strong version of \eqref{eq:mfl} exhibits uniqueness, so it follows that $\ell = \frac{1}{\alpha} b$. In other words, $\frac{1}{\alpha} b$ solves McKean--Vlasov SDE \eqref{eq:probab_repr_hitting}. This concludes the proof.
\end{proof}

\appendix

\section{The Skorokhod Problem} \label{sec:skorokhod_problem}

We shall briefly recall the classical Skorokhod problem in one dimension. Let $D[0, \infty)$ denote the space of c\`adl\`ag functions $[0, \infty) \to \R$.

\begin{definition} \label{def:skorokhod_problem}
We say that a tuple $(x, z) \in D[0, \infty) \times D[0, \infty)$ solves the Skorokhod problem for $f \in D[0, \infty)$ with $f_0 \geq 0$ if $z$ is a nondecreasing function started from zero such that
\begin{enumerate}[noitemsep, label = (\roman*)]
    \item $x_t = f_t + z_t \geq 0$;
    \item $\int_0^t \bf{1}_{x_s > 0} \, \d z_s = 0$
\end{enumerate}
for all $t \geq 0$.
\end{definition}

Given $f \in D[0, \infty)$ with $f_0 \geq 0$, the unique solution $(x, z)$ of the Skorokhod problem for $f$ is given by $z_t = \sup_{0 \leq s \leq t} (f_s)_-$ and $x = f + z$.

\section*{Acknowledgement}
The author would like to thank Mykhaylo Shkolnikov for an insightful discussion on the topic of the manuscript.

\sloppypar
\printbibliography

\end{document}